\newcommand\phase[1]{{\footnotesize $\boxed{#1}$}}
\providecommand{\doi}[1]{\url{https://doi.org/{#1}}}
\newcounter{tikzfigures}
\newcommand{\scrS}{\mathscr{S}}
\newcommand{\semigroup}{\scrS}
\newcommand{\Pcal}{\mathcal{P}}
\newcommand{\Zcal}{\mathcal{Z}}
\newcommand{\ZQcal}{\mathcal{Z}^{\mathcal{Q}}}
\newcommand{\Fcal}{\mathcal{F}}
\newcommand{\Pprio}{\mathcal{P}_\textsf{prio}}
\newcommand{\Ppsel}{\mathcal{P}_\textsf{psel}}
\newcommand{\Qcal}{\mathcal{Q}}
\newcommand{\Qsync}{\mathcal{Q}_\textsf{sync}}
\newcommand{\Qprio}{\mathcal{Q}_\textsf{prio}}
\newcommand{\Qpsel}{\mathcal{Q}_\textsf{psel}}
\newcommand{\R}{\mathbb{R}}
\newcommand{\Rplus}{\mathbb{R}_{\geq 0}}
\newcommand{\N}{\mathbb{N}}
\newcommand{\Z}{\mathbb{Z}}
\newcommand{\dr}{\textsc{Dr.}}
\newcommand{\Germ}{\mathbb{G}}
\newcommand{\out}{^{\textrm{out}}}
\newcommand{\inc}{^{\textrm{in}}}
\newcommand{\ie}{i.e.}
\newcommand{\etc}{etc}
\newcommand{\cadlag}{{c\`adl\`ag}}
\newcommand{\reviewone}[1]{}
\newcommand{\reviewtwo}[1]{}
\newcommand{\reviewthree}[1]{}
\newcolumntype{L}[1]{>{\raggedright\let\newline\\\arraybackslash\hspace{0pt}}m{#1}}
\newcolumntype{C}[1]{>{\centering\let\newline\\\arraybackslash\hspace{0pt}}m{#1}}
\newcolumntype{R}[1]{>{\raggedleft\let\newline\\\arraybackslash\hspace{0pt}}m{#1}}
\newtheorem{correspondence}[definition]{Correspondence Theorem}
\newtheorem{paradox}[definition]{Paradox}
\begin{document}

\setcounter{page}{169}
\publyear{2021}
\papernumber{2086}
\volume{183}
\issue{3-4}

  \finalVersionForARXIV

\title{Piecewise Affine Dynamical Models of \\
 Petri Nets -- Application to  Emergency Call Centers\thanks{The second author is supported by a joint PhD grant of DGA and INRIA.
             The authors have been partially supported by the ``Investissement d'avenir'', r{\'e}f{\'e}rence ANR-11-LABX-0056-LMH,
               LabEx LMH and by a project of ``Centre des Hautes {\'E}tudes du Minist{\`e}re de l'Int{\'e}rieur'' (CHEMI).} }

\author{Xavier Allamigeon\thanks{Address for correspondence:   INRIA and CMAP, CNRS, \'Ecole polytechnique, IP Paris, France.  \newline \newline
          \vspace*{-6mm}{\scriptsize{Received  October 2020; \ revised June 2021.}}},  Marin Boyet,  St\'ephane Gaubert
          \\
   INRIA and CMAP, CNRS, \'Ecole polytechnique\\
    IP Paris, France\\
 Xavier Allamigeon, Marin Boyet,  Stephane Gaubert\\
\{xavier.allamigeon, marin.boyet,  stephane.gaubert\}@inria.fr
 }

\runninghead{X. Allamigeon et al.}{Piecewise Affine Dynamics of Timed Petri Nets}

\maketitle

\vspace*{-5mm}
\begin{abstract}
  We study timed Petri nets, with preselection and priority routing.
  We represent the behavior of these systems by piecewise affine dynamical systems.  We use tools from the theory of nonexpansive mappings to analyze these systems. We establish an equivalence theorem between priority-free fluid timed Petri nets
  and semi-Markov decision processes, from which we derive
  the convergence to a periodic regime and the polynomial-time computability of the throughput.
  More generally, we develop an approach inspired by tropical geometry,
 characterizing the congestion phases as the cells of a polyhedral complex.
  We illustrate these results by a current application to the performance evaluation
  of emergency call centers in the Paris area. We show that priorities
  can lead to a paradoxical behavior: in certain regimes,
  the throughput of the most prioritary task
  may not be an increasing function of the resources.
\end{abstract}
\begin{keywords}
Timed Petri net,
Performance evaluation,
Markov decision process,
Tropical geometry,
Emergency call center
\end{keywords}

\section{Introduction}
\paragraph{Motivation}

Emergency call centers exhibit complex synchronization and concurrency phenomena. Various types of calls induce diverse chains of actions, including reception of the call, instruction by experts, dispatch of emergency means and monitoring of operations in progress.
The processing of
calls is subject to priority rules, making sure that the requests
evaluated as the most urgent are treated first.
The present work originates from a specific case study, concerning
the performance evaluation of the medical
emergency call centers in Paris
and its inner suburbs, operated by four {\em Services d'aide m\'edicale urgente} (SAMU) of {\em Assistance Publique -- H\^opitaux de Paris} (AP-HP).
One needs
to evaluate performance indicators, like the throughput
(number of calls of different types that can be processed without
delay). One also needs to optimize the resources (e.g., personnel of different kinds) to guarantee a prescribed quality of service for a given
inflow of calls.

\definecolor{gray1}{gray}{0.82}
\definecolor{gray2}{gray}{0.94}
\setlength{\arrayrulewidth}{.8pt}

\begin{table}[h]
\begin{center}
  \caption{Correspondence between Petri nets and semi-Markov~decision~processes}
  \label{table:correspondence} \vspace{-2mm}
{\footnotesize
\begin{tabular}{C{4.5cm}!{\color{white}{\vrule width .8pt}}C{6cm}C{0cm}}
  \rowcolor{gray1}  Timed Petri nets & Semi-Markov decision processes      & \\[.6ex] \arrayrulecolor{white}\hline
  \rowcolor{gray2}  Transitions      & States                              & \\[.6ex] \arrayrulecolor{white}\hline
  \rowcolor{gray2}  Places           & Actions                             & \\[.6ex] \arrayrulecolor{white}\hline
  \rowcolor{gray2}  Physical time    & Time remaining to live              & \\[.6ex] \arrayrulecolor{white}\hline
  \rowcolor{gray2}  Counter function & Finite horizon value function       & \\[.6ex] \arrayrulecolor{white}\hline
  \rowcolor{gray2}  Synchronization & Multiple actions           & \\[.6ex] \arrayrulecolor{white}\hline
  \rowcolor{gray2}  Preselection routing & Probabilistic moves        & \\[.6ex] \arrayrulecolor{white}\hline
  \rowcolor{gray2}  Priority routing & Negative probabilities              & \\[.6ex] \arrayrulecolor{white}\hline
  \rowcolor{gray2}  Throughput       & Average cost                        & \\[.6ex] \arrayrulecolor{white}\hline
  \rowcolor{gray2}  Bottleneck places  & Optimal policies                  & \\[.6ex] \arrayrulecolor{white}\hline
  \rowcolor{gray2}  Congestion phases & Cells of the average cost complex & \\[.6ex]
  \end{tabular}}\end{center}\vspace*{-5mm}
\end{table}

\paragraph{Contribution}
We develop a general method for the analysis
of the timed behavior of Petri nets, based
on a representation by piecewise linear dynamical systems.
These systems govern {\em counter functions}, which
yield the number of firings of transitions as a function of time.
We allow routings based either on preselection or priority rules.
Preselection applies to situations in which certain attributes
of a token determine the path it follows, e.g.~different
types of calls require more or less complex treatments.
Moreover, priority rules are used to allocate resources when conflicts arise.
We study a {\em fluid relaxation}
of the model, in which the numbers of firings can take real
values.
Supposing the absence of priority routing,
we establish a correspondence between timed Petri nets
and semi-Markov decision processes. \Cref{table:correspondence} provides the details of this correspondence that we shall discuss in the paper. Then, we apply methods from the theory
of semi-Markov decision processes to analyze timed
Petri nets.  We show that the counter variables converge
to a periodic orbit (modulo additive constants). Moreover, the
throughput can be computed in polynomial time, by
looking for affine stationary regimes and exploiting
linear programming formulations.
We also show that the throughput is given as a function of the resources (initial marking), by an explicit concave piecewise affine map. The cells
on which this map is affine yield a polyhedral complex,
representing the different ``congestion phases''. We finally
discuss the extension of these analytic results to the case
with priorities. The dynamics still has the form
of a semi-Markov type Bellman equation,
but with {\em negative} probabilities. Hence, the theoretical
tools used to show the convergence to a periodic orbit
do not apply anymore. However, we can look for
the affine stationary regimes, which turn out to be the
points of a tropical variety.
From this, we still obtain a phase diagram, representing
all the possible throughputs of stationary regimes.
Throughout the paper, these results are illustrated by the
case study of emergency call centers. The final section
focuses on the analysis of a policy
proposed by the SAMU, involving a monitored reservoir,
designed to handle without delay the most urgent calls.
We show that this particular model
has a paradoxical behavior in an exceptional congestion regime: increasing some resources
may result in a decrease of the throughput of the most prioritary task.

\paragraph{Related work}
Our approach originates from the max-plus modeling
of timed discrete event systems, introduced by Cohen, Quadrat and Viot
and further investigated by Baccelli and Olsder and a number of authors.
We refer the reader to the monographs~\cite{bcoq,how06}
and to the survey of Komenda, Lahaye,
Boimond and van den Boom~\cite{KOMENDA}.
The max-plus approach was originally developed for timed event graphs.
Cohen, Gaubert and Quadrat extended it to fluid Petri nets with preselection routing~\cite{CGQ95b,CGQ95a}.
Gaujal and Giua established in~\cite{gaujal2004optimal}
further results on the model of~\cite{CGQ95b,CGQ95a}.
Their results include a characterization of the throughput
as the optimal solution of linear program.
Recalde and Silva~\cite{recalde} obtained linear programming formulations
for a different fluid model.

By comparison with~\cite{CGQ95b,CGQ95a,gaujal2004optimal}, we use more
powerful results on semi-Markov decision processes and nonexpansive mappings. This allows us, in particular, to deduce
more precise asymptotic results, concerning the deviation $z(t)-\rho t$ between the counter function $z$ at time $t$ and its average growth $\rho t$,
instead of the mere existence of the limit $\lim_{t\to+\infty} z(t)/t=\rho$.
We also establish the existence of the latter limit even in the case of irrational holding times, and provide a polyhedral characterization
of this limit, in terms of the ``Throughput complex'' (Corollary~\ref{coro:complex}).
This characterization holds without any irreducibility assumption (an earlier
formula of this nature was stated in~\cite{CGQ95b} in the special
irreducible case). The present work is
a follow-up of~\cite{allamigeon2015performance}, in which B\oe{}uf and two of the authors
established an equivalence between timed Petri nets with priorities
and a class of piecewise-linear models.

The present methods are complementary to
  probabilistic approaches~\cite{lecuyer}.
  Priority rules put our systems outside the classes of exactly solvable probabilistic
  models; only scaling limit type results on suitably purified
  models are known~\cite{boeufrobert}.
  In contrast,
  fluid models allow one to compute phase portraits analytically. They lead to lower bounds of dimensioning which are accurate when the arrivals do not fluctuate, and which can subsequently be confronted with
  results of simulation.

\section{Piecewise affine models of timed Petri nets}
\subsection{Preliminaries on timed Petri nets}

A {\em timed Petri net} is given by a
bipartite graph whose vertices are either {\em places} or {\em transitions}. We denote by $\Pcal$ (resp.\ $\Qcal$) the finite set of places (resp.\ transitions). For two vertices $x$ and $y$ forming a place-transition pair, $x$ is said to be an upstream (resp.\ downstream) vertex of $y$ if there is an arc of the graph going from $x$ to $y$ (resp. from $y$ to $x$). The set of upstream (resp.\ downstream) vertices of $x$ is denoted by $x\inc$ (resp.\ $x\out$).

Every place $p$ is equipped with an {\em initial marking} $m_p \in \N$,  representing the number of tokens initially present in the place before starting the execution of the Petri net.
The place $p$ is also equipped with a holding time $\tau_p \in \Rplus$, so that a token entering $p$ must sojourn in this place at least for a time $\tau_p$ before becoming available for firing a downstream transition. In contrast, firing a transition is instantaneous. Every arc from a place $p$ to a transition $q$ (resp.\ from a transition $q$ to a place $p$) is equipped with a positive and integer weight denoted by $\alpha_{qp}$ (resp.~$\alpha_{pq}$). Transition $q$ can be fired only if each upstream place $p$ contains $\alpha_{qp}$ tokens. In this case, one firing of the transition $q$ consumes $\alpha_{qp}$ tokens in each upstream place $p$, and creates $\alpha_{p'q}$ in each downstream place~$p'$. Unless specified, the weights are set to $1$.
The same transition can be fired as many times as necessary, as long as tokens in the upstream places are available. We shall assume that {\em transitions are fired as soon as possible}. By convention, the tokens of the initial marking are all available when the execution starts.

When a place has several downstream transitions, we must provide a {\em routing rule} specifying which transition is to be fired once a token is available. We distinguish two sets of rules: priority and preselection.

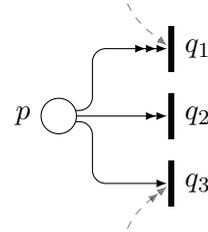
\begin{wrapfigure}{R}{0.3\textwidth}
  \vspace{-.7cm}
  \centering

\begin{tikzpicture}[scale=0.37]
\tikzset{place/.style={draw,circle,inner sep=4.7pt, fill=white}}
\tikzset{transition/.style={rectangle, thick,fill=black, minimum width=6mm,inner ysep=1pt}}
\tikzset{transitionV/.style={rectangle, thick,fill=black, minimum height=6mm,inner xsep=1pt}}
\tikzset{jeton/.style={draw,circle,fill=red,inner sep = .5pt}}
\tikzset{arrowPetri/.style={>=latex,rounded corners=5pt}}

\begin{scope}[shift={(0,0)},rotate=90]
    \coordinate (p) at (0,4) {};

    \node[transitionV, label = right:{$q_3$}] (q1) at (-2.4,0) {};
    \node[transitionV, label = right:{$q_2$}] (q2) at (0,0) {};
    \node[transitionV, label = right:{$q_1$}] (q3) at (2.4,0) {};

    \draw[->,arrowPetri] ($(p)+(-.2,0)$) -- ($(p)+(-0.2,-1.2)$) -| (q1);
    \draw[->>,arrowPetri] (p) -- (q2);
    \draw[->>>,arrowPetri] ($(p)+(.2,0)$) -- ($(p)+(0.2,-1.2)$) -| (q3);

    \node[place, label = left:{$p$}] (pf) at (p) {};

    \draw[->>,arrowPetri, dashed, opacity=.5] ($(q1)+(-1.6, 1.55)$) -- ($(q1)+(-.95, 1.2)$) -- (q1)  {};
    \draw[->,arrowPetri, dashed, opacity=.5] ($(q3)+(1.6, 1.55)$) -- ($(q3)+(.95, 1.2)$) -- (q3)  {};

\end{scope}
\end{tikzpicture}

  \caption{Priority routing}\label{fig:priority_routing}
  \vspace{-.7cm}
\end{wrapfigure}

A {\em priority routing} on a place $p$ is specified by a total order $\prec_p$ over the downstream transitions of $p$. The principle of this routing rule is that a transition $q \in p\out$ is fired only if there is no other fireable transition $q' \in p\out$ with a higher priority, \ie~$q' \prec_p q$ (or equivalently $q \succ_p q'$).
We represent the ordering of downstream transitions by a variable number of arrow tips, like in~\Cref{fig:priority_routing}, with the convention that the highest priority transition (the minimal element of $p\out$ with respect to $\prec_p$) is the one pointed by the highest number of tips.

Priority routing will be used in our model of monitored reservoir studied in~\Cref{sec:casestudyprio}. We denote by $\Qprio$ the subset of $\Qcal$ consisting of the downstream transitions of places subject to priority routing. We allow transitions in $\Qprio$ to admit multiple upstream places ruled by priority routings as long as the following compatibility condition is met.
\begin{definition}\label{def:compatibility_condition}
Let $\Pprio$ denote the set of places subject to priority routing. We say that the rules $(\prec_p)_{p \in \Pprio}$ are {\em compatible} if their union (as binary relations) is acyclic.
\end{definition}

Acyclicity means that the transitive closure of the union of the local total orders $(\prec_p)_{p \in \Pprio}$ forms a global partial order on the set $\Qcal$ of all transitions.

\medskip
The {\em preselection routing} on a place $p$ is described by a collection of nondecreasing maps $(\Pi^p_q)_{q\in p\out}$ from $m_p+\N$ to $\N$ satisfying the property:
\begin{equation*}
\forall n\in \N \quad\textrm{s.t.}\quad n\geq m_p\, , \; \sum_{q\in p\out} \Pi^p_q(n) = n \, .
\end{equation*}
For $q\in p\out$, $\Pi^p_q(n)$ represents the number of tokens which are reserved to fire transition $q$, amongst the $n$ first tokens to enter place $p$ (including the initial marking $m_p$). In other words, they cannot be used to fire any other transition of $p\out$.
A natural example of preselection routing is the {\em proportional periodic routing}: if $p\out=\{q_1,q_2,\dots,q_k\}$, consider a positive integer $L$, a partition $(J_1,J_2,\dots,J_k)$ of $\{1,2,\dots,L\}$ and define $\Pi^p_{q_k}(n) = \mathrm{card}(\{1,2,\dots,n\}\cap (J_k+L\N))$. For large values of $n$, we have $\Pi^p_{q_k}(n)\sim n\cdot\mathrm{card}(J_k)/L$.

\medskip
In order to simplify the presentation of our following dynamical model, we assume that preselection routing is only allowed for places whose downstream transitions do not admit other upstream places.
The firing rule of the general case may be defined by reduction to this one by introducing extra places with holding time $0$, as illustrated on \Cref{fig:preselection}.

\begin{figure}[h]
\centering
\begin{minipage}[b]{0.65\textwidth}

\begin{tikzpicture}[scale=0.37]
\tikzset{place/.style={draw,circle,inner sep=4.7pt, fill=white}}
\tikzset{transition/.style={rectangle, thick,fill=black, minimum width=6mm,inner ysep=1pt}}
\tikzset{transitionV/.style={rectangle, thick,fill=black, minimum height=6mm,inner xsep=1pt}}
\tikzset{jeton/.style={draw,circle,fill=red,inner sep = .5pt}}
\tikzset{arrowPetri/.style={>=latex,rounded corners=5pt}}

\begin{scope}[shift={(0,0)}, rotate=0]
    \coordinate (p) at (0,0) {};

    \node[transitionV, label = right:{$q_1$}] (q1) at ( 3, 1.5) {};
    \node[transitionV, label = right:{$q_2$}] (q2) at ( 3,-1.5) {};

    \draw[->,arrowPetri] ($(p)+(0,.2)$) -- ($(p)+(1.2,.2)$)  |- (q1);
    \draw[->,arrowPetri] ($(p)+(0,-.2)$) --  ($(p)+(1.2,-.2 )$) |- (q2);

    \node[place,label=left:{$p$}] (pf) at (0,0) {};

    \draw[->,arrowPetri, dashed, opacity=.5] ($(q1)+(-1.55, 1.6)$) -- ($(q1)+(-1.2, .95)$) -- (q1)  {};
    \draw[->,arrowPetri, dashed, opacity=.5] ($(q2)+(-1.55,-1.6)$) -- ($(q2)+(-1.2,-.95)$) -- (q2)  {};

\end{scope}

\node (lbl) at (5,0) {$:=$};

\begin{scope}[shift={(8,0)}, rotate=0]
    \coordinate (p) at (0,0) {};

    \node[place, label = above:{$p_1^*$}] (p1) at (6,1.5) {};
    \node[place, label = below:{$p_2^*$}] (p2) at (6,-1.5) {};

    \node[transitionV, label = above right:{$q_1^*$}] (q1) at ( 3, 1.5) {};
    \node[transitionV, label = below right:{$q_2^*$}] (q2) at ( 3,-1.5) {};
    \node[transitionV, label = right:{$q_1$}] (q1b) at ( 9, 1.5) {};
    \node[transitionV, label = right:{$q_2$}] (q2b) at ( 9,-1.5) {};

    \draw[->,arrowPetri] ($(p)+(0,.2)$) -- ($(p)+(1.2,.2)$)  |- (q1);
    \draw[->,arrowPetri] ($(p)+(0,-.2)$) --  ($(p)+(1.2,-.2 )$) |- (q2);

    \node[place,label=left:{$p$}] (pf) at (0,0) {};

    \draw[->,arrowPetri] (q1) -- (p1);
    \draw[->,arrowPetri] (q2) -- (p2);
    \draw[->,arrowPetri] (p1) -- (q1b);
    \draw[->,arrowPetri] (p2) -- (q2b);

    \draw[->,arrowPetri, dashed, opacity=.5] ($(q1b)+(-1.55, 1.6)$) -- ($(q1b)+(-1.2, .95)$) -- (q1b)  {};
    \draw[->,arrowPetri, dashed, opacity=.5] ($(q2b)+(-1.55,-1.6)$) -- ($(q2b)+(-1.2,-.95)$) -- (q2b)  {};
\end{scope}
\end{tikzpicture}\vspace*{-3mm}

\captionsetup{justification=centering}
\caption{Compact notation for preselection\\ routing in case of multiple upstream places}
\label{fig:preselection}
\end{minipage}
\hfill
\begin{minipage}[b]{0.33\textwidth}
\centering

\begin{tikzpicture}[scale=0.37]
\tikzset{place/.style={draw,circle,inner sep=4.7pt, fill=white}}
\tikzset{transition/.style={rectangle, thick,fill=black, minimum width=6mm,inner ysep=1pt}}
\tikzset{transitionV/.style={rectangle, thick,fill=black, minimum height=6mm,inner xsep=1pt}}
\tikzset{jeton/.style={draw,circle,fill=red,inner sep = .5pt}}
\tikzset{arrowPetri/.style={>=latex,rounded corners=5pt}}

\begin{scope}[shift={(0,0)}, rotate=0]
    \coordinate (p)  at (0,0) {};
    \coordinate (p2) at (0,3) {};
    \coordinate (p3) at (0,1.5) {};

    \node[opacity = 0] at (0,-1.8) {\;};

    \node[transitionV, label = right:{$q$}] (q1) at ( 3, 1.5) {};

    \draw[->,arrowPetri] ($(p)+(0,0)$) --    ($(p)+(2,0)$)    |-  ($(q1)+(-.1,-.3)$);
    \draw[->,arrowPetri] ($(p2)+(0,0)$) --  ($(p2)+(2,0 )$) |- ($(q1)+(-.1,.3)$);

    \node[place,label=left:{$p_1$}] (pf)  at (p) {};
    \node[place,label=left:{$p_2$}] (pf2) at (p2) {};

\end{scope}
\end{tikzpicture}\vspace*{-3mm}

\captionsetup{justification=centering}
\caption{A synchronization pattern\\ \textcolor{white}{\_}}
\label{fig:sync}
\end{minipage}\vspace{-1.5mm}
\end{figure}

We denote by $\Qpsel$ the subset of $\Qcal$ consisting of the downstream transitions of places ruled by preselection routing. By construction, we have $\Qpsel\cap\Qprio = \varnothing$. We define $\Qsync \coloneqq \Qcal\setminus(\Qpsel\cup\Qprio)$, \ie~the set of transitions with no upstream place ruled by preselection or priority routing. As a result, we have a partition of $\Qcal$ into $\Qprio$, $\Qpsel$, and $\Qsync$. Transitions of $\Qsync$ correspond to a synchronization pattern between several upstream places, as illustrated in~\Cref{fig:sync}. We point out that transitions with one upstream place can be of any of the three kinds $\Qprio$, $\Qpsel$, and $\Qsync$. The choice of their classification does not affect the analysis developed below.
\begin{remark}
Contrary to the preselection routing, the priority routing is essentially non-monotone (and therefore shall be left aside in Section~\ref{sec:correspondence}). Indeed, a ``fresh'' token might activate some prioritized transition before some other ``older'' token activates a non-prioritized transition.
\end{remark}

\subsection{Dynamic equations governing counter functions}

We associate with every transition $q\in \Qcal$ a {\em counter function} $z_q$  from $\R$ to $\Rplus$ such that $z_q(t)$ represents the number of firings of transition $q$ that occurred up to time $t$ included. Similarly, given a place $p\in \Pcal$, we denote by $x_p(t)$ the number of tokens that have entered place $p$ up to time $t$ included, taking into account the tokens initially present in $p$. By construction, $x_p$ and $z_q$ are non-decreasing {\cadlag} (right continuous with left limits) functions. Given a {\cadlag} function $f$, we denote by $f(t^-)$ the left limit at the point $t$. It may be smaller than $f(t)$.

\medskip
For each place $p\in\Pcal$, $x_p(t)$ is
given by the sum of the initial marking $m_p$ and the number of firings of transitions $q\in p\inc$ weighted by $\alpha_{pq}$ (recall that one firing of transition $q$ outputs $\alpha_{pq}$ tokens in $p$):
\begin{equation}
  \forall p\in\Pcal, \quad x_p(t) = m_p + \sum_{q\in p\inc} \alpha_{pq} \, z_q(t)
  \, .
  \label{eq:petri1}
  \tag{P1}
  \end{equation}

For each transition $q\in\Qcal$, the equation satisfied by $z_q$ depends on the routing policy of its upstream places. Suppose $q\in\Qsync$, so that its upstream places only admit $q$ for downstream transition. Since transitions are fired as early as possible and must wait for all upstream tokens to be available, we have:
\begin{equation}
z_q(t) = \min_{p\in q\inc}\; \bigg\lfloor \alpha_{qp}^{-1} x_p(t-\tau_p)\bigg\rfloor \, .
\label{eq:petri2}
\tag{P2}
\end{equation}
where $\lfloor\cdot\rfloor$ denotes the floor function (recall that $z_q$ must be integer). Suppose now that $q\in\Qpsel$. Because $q$ admits only one upstream place $p$, we also have:
\begin{equation}
   \quad z_q(t) = \bigg\lfloor \alpha_{qp}^{-1} \Pi^p_q(x_p(t-\tau_p))\bigg\rfloor \, .
  \label{eq:petri3}
  \tag{P3}
  \end{equation}
Finally, suppose that $q\in\Qprio$. We have
\begin{equation}
z_q(t) = \min_{p\in q\inc} \bigg\lfloor \alpha_{qp}^{-1}\bigg( x_p(t-\tau_p) - \sum_{q' \mathrel{\prec_p}\, q}\alpha_{q'p}z_{q'}(t) - \sum_{q' \mathrel{\succ_p}\, q}\alpha_{q'p}z_{q'}(t^-)   \bigg) \bigg\rfloor \, .
\label{eq:petri4}
\tag{P4}
\end{equation}

This equation can be interpreted by examining $z_q(t)-z_q(t^-)$, which represents the number of firings of $q$ at time $t$. The amount of tokens available in place $p\in q\inc$ at time~$t^-$ is $x_p(t-\tau_p)-\sum_{q'\in p\out}\alpha_{q'p}z_{q'}(t^-)$. However, transitions with higher priority than $q$ relatively to $p$ fire \linebreak $\sum_{q'\prec_p\, q}\alpha_{q'p}(z_{q'}(t)-z_{q'}(t^-))$ of these tokens, leaving $x_p(t-\tau_p) - \sum_{q' \mathrel{\prec_p}\, q}\alpha_{q'p}z_{q'}(t) -$  \linebreak $\sum_{q' \mathrel{\succeq_p}\, q}\alpha_{q'p}z_{q'}(t^-)$ available to fire $q$.
Equation~\eqref{eq:petri4} is obtained by packing these tokens in an integer number of groups of $\alpha_{qp}$ and taking the minimum of such terms over $q\inc$.

\definecolor{gray1}{gray}{0.82}
\definecolor{gray2}{gray}{0.94}
\setlength{\arrayrulewidth}{.8pt}
\begin{table}[b]
\vspace*{-2mm}
 \caption{Dynamic equations followed by transitions counter functions}  \label{table:equation_counters}
  \vspace{-2mm}
 \makebox[\textwidth][c]{
    \begin{tabular}{C{1.6cm}!{\color{white}{\vrule width .8pt}}S{L{13cm}}}
      \rowcolor{gray1} Type  & \multicolumn{1}{c}{Counter equation in the continuous model} \\
      \arrayrulecolor{white}\hline
      \rowcolor{gray2} \(q\in\Qsync\)  &  \(\displaystyle \, z_q(t)= \min_{p\in q\inc} \alpha_{qp}^{-1}\bigg(m_p + \!\! \sum_{q'\in p\inc} \alpha_{pq'} \, z_{q'}(t-\tau_p)\bigg)\)\\
      \arrayrulecolor{white}\hline
      \rowcolor{gray2} \(q\in\Qpsel\) &  \(\displaystyle \, z_q(t)=  \pi_{qp}\cdot\alpha_{qp}^{-1}\bigg(m_p + \!\! \sum_{q'\in p\inc} \alpha_{pq'} \, z_{q'}(t-\tau_p)\bigg) \)\\
      \arrayrulecolor{white}\hline
      \rowcolor{gray2} \(q\in\Qprio\)  & \(\displaystyle \, z_q(t)= \min_{p\in q\inc} \alpha_{qp}^{-1}\bigg(m_p + \!\! \sum_{q'\in p\inc} \alpha_{pq'} \, z_{q'}(t-\tau_p)-\!\!\sum_{q' \mathrel{\prec_p} q}\!\alpha_{q'p}z_{q'}(t)-\!\!\sum_{q' \mathrel{\succ_p} q}\!\alpha_{q'p}z_{q'}(t^-)\bigg)\)\\ \arrayrulecolor{white}
      \hline
    \end{tabular}   }
 \end{table}

The correspondence between the semantics of timed Petri net (expressed in terms of a transition system acting over states corresponding to timed markings) and the equations above has been proved in~\cite{allamigeon2015performance} in a more restricted model. It carries over to the current setting, allowing multiple levels of priority, preselection routings, and arcs with valuations.

It will be convenient
to consider the continuous relaxation of the previous dynamics. This boils down to considering infinitely divisible tokens and real-valued counters functions. The weights $\alpha_{pq}$ or $\alpha_{qp}$ can now be allowed to take positive real values. The priority and preselection routing rules are not affected by the fluid approximation, though in what follows we choose to focus only on proportional preselection routing: if a place $p$ is ruled by preselection, we fix a
stochastic vector
$(\pi_{qp})_{q\in p\out}$ such that $\Pi^p_q(x) = \pi_{qp} x$ for $x\geq m_p$.
Equivalently, this corresponds to the continuous relaxation of a stochastic routing at place $p$, in which $\pi_{qp}$ is the probability for a token to be routed to transition $q$. Finally, the continuous relaxation drops the floor functions. This leads to the dynamical system presented in~\Cref{table:equation_counters}, governing the counter functions $z_q$ of the transitions.

\section{Models of medical Emergency Call Centers}

We next present two models based on an ongoing collaboration with the Emergency Medical Services (EMS) of Paris and its inner suburbs (SAMU 75, 92, 93, and 94 of AP-HP).
In France, the nation-wide phone number 15 is dedicated to medical distress calls, dispatched to regional call centers. The calls are first answered by an operator referred to as a {\em medical regulation assistant} (MRA),
who categorizes the request, takes note of essential personal information and transfers the call to one of the following two types of physicians, depending on the estimated severity of the case:
\begin{enumerate}[(i)]
\item\label{item:case1} an emergency doctor, able to dispatch Mobile Intensive Care Units or first-responding ambulances and to swiftly send the patient to the most appropriate hospital unit;
\item\label{item:case2} a general practitioner, who can dispatch ambulances and provide medical advice.
\end{enumerate}
An MRA may also handle the call without transferring it if a conversation with a physician is not needed (report from a medical partner, dial error, \etc.).
\reviewtwo{
  There are some specificities of the model that call for precision/explanations. First, the probability to be a very urgent case is pi, but the model of TPNs with priority routing does not allow probabilities. One has to figure that probabilities are modeled with the alpha places, but the value for alpha is supposed to be an integer from the definitions of section 2. So one needs more explanations on the behavior of place 1 and transitions 2 \& 3 to be convinced that this is indeed a TPN-PR. The shortcut of Figure 4 is particularly misleading, as it lets the reader believe that place contents in place 1 is distributed to transitions, as in fact a certain amount of tokens is transferred to intermediate places that are then used by transitions. This leaves some ambiguity in the semantics: suppose that a place has one token, and two multiplying places connected to it, respectively with values x,y. Does it means that x+y tokens are produced out of one token (and then consumed by their respective transitions )? Or that either x or y tokens are produced ? The first case is an MDP behavior, while the second is more adapted to Petri nets.
}

In case~\ref{item:case1},
the MRA must wait for an emergency physician to be available before transferring the call, in order to report the details of the request. In this way, the patient is constantly kept on line with an interlocutor.
In case~\ref{item:case2}, patients are left on hold, and dealt with by general practitioners who answer the calls in the order of arrival.
As a first step, our main focus is the coupling between the answering operator and the emergency physician (which is a critical link of the system). Thus, for the sake of simplicity, we do not take into account what happens to calls in case~\ref{item:case2} after the MRA is released. In other words, we consider a simplified model in which only two types of inbound calls can occur: the ones which require the MRA to wait for an emergency physician and the ones which do not. We shall also consider that the patients do not leave the system before their call is picked up (infinite patience assumption).

\begin{figure}[h]
  \centering
  \def\tkzscl{.29}
  \vspace{-.2cm}

\definecolor{colorARM}{rgb}{0,0,1}
\definecolor{colorARMres}{rgb}{.92,.5,.11}
\definecolor{colorAMU}{rgb}{1,0,0}
\definecolor{colorexit}{rgb}{0.4,0.4,0.4}

\tikzset{place/.style={draw,circle,inner sep=2.5pt,semithick}}
\tikzset{transition/.style={rectangle, thick,fill=black, minimum width=2mm,inner ysep=0.5pt, minimum width=2mm}}
\tikzset{jeton/.style={draw,circle,fill=black!80,inner sep=.35pt}}
\tikzset{pre/.style={=stealth'}}
\tikzset{post/.style={->,shorten >=1pt,>=stealth'}}
\tikzset{-|/.style={to path={-| (\tikztotarget)}}, |-/.style={to path={|- (\tikztotarget)}}}
\tikzset{bicolor/.style 2 args={dashed,dash pattern=on 1pt off 1pt,#1, postaction={draw,dashed,dash pattern=on 1pt off 1pt,#2,dash phase=1pt}}}
\tikzset{arrowPetri/.style={>=latex,rounded corners=5pt,semithick}}

\begin{tikzpicture}[scale=\tkzscl,font=\scriptsize]

\def\p{2.2}

\begin{scope}[shift={(0,4)}]
\node[place] (pool_arm) at ($(-2*\p,-1.5*\p)$) {};
\node (txt_Na) at ($(pool_arm)+(-1.3,0)$) {$N_A$};

\node[transition]    (q_arrivals)                          at    (0, 0) {};
\node[place]         (p_inc_calls)                         at    ($(q_arrivals) + (0, \p)$)  {};
\node[transition]    (q_inc_calls)                         at    ($(p_inc_calls) + (0, \p)$)  {};
\node[place]         (p_arrivals)        at    ($(q_arrivals) + (0,-\p)$) {};
\node[transition]    (q_debut_NFU)       at    ($(p_arrivals) + (0,-\p)$) {};
\node[transition]    (q_debut_AMU)         at    ($(p_arrivals) + (2*\p,-\p)$) {};

\draw[->,arrowPetri,colorARM] (p_arrivals)  |- ($(p_arrivals)+(.5*\p,-.5*\p)$) -| (q_debut_AMU);
\draw[->,arrowPetri,colorARM] (q_debut_NFU) |- ($(q_debut_NFU)+(0,-.5*\p)$) -| (pool_arm);
\draw[->,arrowPetri,colorARM] (p_arrivals)  -- (q_debut_NFU);
\draw[->,arrowPetri,colorARM] (q_arrivals) -- (p_arrivals);

\node[place]         (p_synchro)           at    ($(q_debut_AMU)  + (0,-\p)$) {};
\node[transition]    (q_unsynchro)         at    ($(p_synchro)  + (0,-\p)$) {};

\draw[arrowPetri]    (q_inc_calls) |- ($(q_inc_calls)+(-.35*\p,.25*\p)$);
\draw[dashed, arrowPetri]    (q_inc_calls) |- ($(q_inc_calls)+(-1.5*\p,.25*\p)$);
\draw[->,arrowPetri] (q_inc_calls) -- (p_inc_calls);
\draw[->,arrowPetri]                                    (p_inc_calls) -- (q_arrivals);
\draw[->,arrowPetri,colorARM]                           (q_unsynchro) -- ($(q_unsynchro)+(0,-.5*\p)$)  -|  (pool_arm);

\node[place]    (p_consult_AMU)    at    ($(q_unsynchro)+(0,-\p)$) {};

\draw[->,arrowPetri,bicolor={colorARM}{colorAMU}]    (q_debut_AMU) -- (p_synchro);
\draw[->,arrowPetri,bicolor={colorARM}{colorAMU}]    (p_synchro) -- (q_unsynchro);

\draw[->,arrowPetri,colorARM]                           (pool_arm)    |- ($(q_arrivals)+(-.5*\p,.5*\p)$) -- (q_arrivals);

\node (txt_taus) at ($(p_synchro)+(.5*\p,0)$) {$\tau_2$};
\node (txt_tau1) at ($(p_arrivals.center)+(.5*\p,0)$) {${\tau}_1$};
\node (txt_tau3) at ($(p_consult_AMU.center)+(.5*\p,0)$) {${\tau}_3$};
\node (txt_pi)  at ($(p_arrivals.center)+(-1.4,-1.65)$) {$1\!-\!\pi$};
\node (txt_pi3) at ($(p_arrivals.center)+(3.7, -1.65)$){$\pi$};

\node[place]      (pool_amu_old)    at    ($(q_unsynchro)+(2*\p,.5*\p)$) {};
\node (txt_Nm) at ($(pool_amu_old)+(-1.3,0)$) {$N_P$};

\node[transition] (q_end_consult_AMU)                  at    ($(p_consult_AMU)+(0,-\p)$) {};
\draw[->,arrowPetri,colorAMU]    (q_unsynchro)       -- (p_consult_AMU);
\draw[->,arrowPetri,colorAMU]    (p_consult_AMU)     -- (q_end_consult_AMU);
\draw[->,arrowPetri,colorAMU]    (q_end_consult_AMU) -- ($(q_end_consult_AMU)+(0,-1)$) -| (pool_amu_old);
\draw[->,arrowPetri,colorAMU]    (pool_amu_old)      |- ($(q_debut_AMU)+(.5*\p,.5*\p)$)      -- (q_debut_AMU);

\node (txt_z0) at ($(q_inc_calls)+(1*\p,0)$) {$z_0 = \lambda t$};
\node (txt_z1) at ($(q_arrivals)+(.5*\p,0)$) {$z_1$};
\node (txt_z2) at ($(q_debut_NFU)+(.5*\p,0)$) {$z_2$};
\node (txt_z3) at ($(q_debut_AMU)+(.5*\p,0)$) {$z_3$};
\node (txt_z4) at ($(q_unsynchro)+(.5*\p,0)$) {$z_4$};
\node (txt_z5) at ($(q_end_consult_AMU)+(.5*\p,0)$) {$z_5$};
\end{scope}
\end{tikzpicture}	
  \caption{A basic model of emergency call center~\eqref{SAMU1}}
  \vspace{-.6cm}
  \end{figure}

We represent this emergency call center by the (EMS-A) Petri net.
Inbound calls arrive \textit{via} the uppermost transition $z_0$. We may assume in what follows that $z_0(t)=\lambda t$ (arrivals at constant rate $\lambda$). The pool of MRAs is represented by the place with initial marking $N_A$. Transition $z_1$ is fired as soon as an MRA is available and a call is waiting for pick-up. Preliminary examination and information filling occur in place with holding time $\tau_1$, ruled by preselection routing: a known fraction $\pi$ of the patients are deemed to need the help of the emergency physician; for the complementary fraction $1 - \pi$ of the patients, the MRA is released at the firing of $z_2$. Transition $z_3$ is fired once a doctor is available from the pool of emergency physicians with initial marking $N_P$ and an MRA waits for transfer. Summarizing the case takes a time $\tau_2$ for both agents, then the firing of $z_4$ releases the MRA and the physician proceeds to the medical consultation with the patient for a time $\tau_3$ before getting released by the firing of $z_5$. We use the color blue (resp.~red) to highlight the circuits involving the MRA (resp.~the emergency doctor). For the sake of readability, patient exits at transitions $z_2$ and $z_5$ are not depicted.

\medskip
Applying the equations of the continuous relaxation of a timed Petri net recorded in~\Cref{table:equation_counters}, we obtain the following system of equations for the counter functions associated with transitions, where $x \wedge y$ stands for $\min(x,y)$.

\begin{equation}\left\{
  \begin{array}{rcrcl}
  z_1(t) & = & z_0(t)\;\;\;\;\; & \wedge & \big(N_{A} + z_2(t) + z_4(t)\big)\\[1ex]
  z_2(t) & = & (1-\pi)z_1(t-\tau_1) & &\\[1ex]
  z_3(t) & = & \pi  z_1(t-\tau_1) &\wedge& \big(N_{P} + z_5(t)\big)\\[1ex]
  z_4(t) & = & z_3(t-\tau_2) & &\\[1ex]
  z_5(t) & = & z_4(t-\tau_3)& &
  \end{array}\right.
  \label{SAMU1}
  \tag{EMS-A}
\end{equation}

As we shall see in~\Cref{sec:correspondence}, a slowdown arising either in the MRAs circuit or in the physicians circuit causes a slowdown of the whole system, owing to the synchronization step at transition $z_3$.
To address this issue and still maintain the presence of an interlocutor with the patient and the brief oral summary told to physician, emergency doctors from the SAMU proposed to consider another model. One may create a new type of MRA, the {\em reservoir} assistant, who after a brief discussion with the MRA having answered the call, places the patient in a monitored reservoir. The answering MRA
is released to pick-up other inbound calls. When an emergency physician becomes available, the reservoir assistant passes on the short briefing to the doctor and transfers the patient.
While the queue of patients in the reservoir is non-empty, the reservoir assistant checks on the patients in the reservoir, and can call patients back in case they hung up.
This replaces the synchronizations between physicians and answering MRAs,
enabling the latter to pick-up new calls more quickly.
Another advantage of the reservoir mechanism is that
if a single reservoir assistant is sufficient to handle all the calls, this agent can have a consolidated vision of all the patients waiting for emergency physicians and revise in real time their priority level if more severe cases arrive, whereas the emergency physician may previously have had to ask each of the waiting MRAs.

The model~\eqref{eq:SAMU2}, whose dynamics shall be introduced and studied in~\Cref{sec:casestudyprio}, implements these modifications; see~\Cref{fig:SAMU2}. The reservoir assistant pool is a new place with initial marking $N_R$ (not necessarily equal to $1$). Reservoir assistants receive patients from the answering MRAs at transition $z_3$ and pass them to physicians at transitions $z_5$ and $z_5'$, depending on the severity of the case. We denote by $\alpha$ the proportion of very urgent calls among patients who need to talk to an emergency physician. In case of conflict, reservoir assistants must first pass the calls already in the reservoir before placing other calls in, and should first handle very urgent calls. Release of the reservoir assistants happen at transitions $z_4$, $z_6$ and $z_6'$. Consultations with a physician take a time $\tau_3$ after which transitions $z_7$ and $z_7'$ can be fired.
The circuits involving the reservoir assistant are depicted with color orange. It can be verified that the places standing for the pool of reservoir assistants and physicians have compatible priority rules.

\begin{figure}[t]
\centering
\def\tkzscl{.29}

\definecolor{colorARM}{rgb}{0,0,1}
\definecolor{colorARMres}{rgb}{.92,.5,.11}
\definecolor{colorAMU}{rgb}{1,0,0}
\definecolor{colorexit}{rgb}{0.4,0.4,0.4}

\tikzset{place/.style={draw,circle,inner sep=2.5pt,semithick}}
\tikzset{transition/.style={rectangle, thick,fill=black, minimum width=2mm,inner ysep=0.5pt, minimum width=2mm}}
\tikzset{jeton/.style={draw,circle,fill=black!80,inner sep=.35pt}}
\tikzset{pre/.style={=stealth'}}
\tikzset{post/.style={->,shorten >=1pt,>=stealth'}}
\tikzset{-|/.style={to path={-| (\tikztotarget)}}, |-/.style={to path={|- (\tikztotarget)}}}
\tikzset{bicolor/.style 2 args={dashed,dash pattern=on 1pt off 1pt,#1, postaction={draw,dashed,dash pattern=on 1pt off 1pt,#2,dash phase=1pt}}}
\tikzset{arrowPetri/.style={>=latex,rounded corners=5pt,semithick}}

\begin{tikzpicture}[scale=\tkzscl,font=\scriptsize]

\def\p{2.2}

\begin{scope}[shift={(0,0)}]
\node[place] (pool_arm) at ($(-2*\p,-1.5*\p)$) {};
\node (txt_Na) at ($(pool_arm)+(-1.3,0)$) {$N_A$};

\node[transition]    (q_arrivals)                          at    (0, 0) {};
\node[place]         (p_inc_calls)                         at    ($(q_arrivals) + (0, \p)$)  {};
\node[transition]    (q_inc_calls)                         at    ($(p_inc_calls) + (0, \p)$)  {};

\node[place]         (p_arrivals)        at    ($(q_arrivals) + (0,-\p)$) {};
\node[transition]    (q_debut_NFU)       at    ($(p_arrivals) + (0,-\p)$) {};
\node[transition]    (q_synchro)         at    ($(p_arrivals) + (2*\p,-\p)$) {};

\node[place]         (pool_res)          at    ($(q_synchro) + (2*\p,.5*\p)$) {};
\node (txt_Nr) at ($(pool_res)+(1.1,.7)$) {$N_R$};

\draw[->,arrowPetri,colorARM] (p_arrivals)  |- ($(p_arrivals)+(.5*\p,-.5*\p)$) -| (q_synchro);
\draw[->,arrowPetri,colorARM] (q_debut_NFU) |- ($(q_debut_NFU)+(0,-.5*\p)$) -| (pool_arm);
\draw[->,arrowPetri,colorARM] (p_arrivals)  -- (q_debut_NFU);
\draw[->,arrowPetri,colorARM] (q_arrivals) -- (p_arrivals);

\node[place]         (p_synchro)           at    ($(q_synchro)  + (0,-\p)$) {};
\node[transition]    (q_unsynchro)         at    ($(p_synchro)  + (0,-\p)$) {};

\node[place]      (pool_amu)    at    ($(pool_res)+(6*\p,0)$) {};
\node (txt_Nm) at ($(pool_amu)+(-1.3,0)$) {$N_P$};

\node[place]         (p_synchro2)          at    ($(pool_amu)  + (-4*\p,.5*\p)$) {};
\node[transition]    (q_unsynchro2)        at    ($(p_synchro2)  + (0,-\p)$) {};
\node[transition]    (q_debut_AMU)         at    ($(p_synchro2) + (0,\p)$) {};
\node[place]         (p_waiting)           at    ($(q_debut_AMU)  + (2*\p,1.5*\p)$) {};
\node[place]         (p_consult_AMU)       at    ($(q_unsynchro2)+ (0,-1*\p)$) {};
\node[transition]    (q_end_consult_AMU)   at    ($(p_consult_AMU)+(0,-\p)$) {};

\node[place]         (p_synchro3)          at    ($(p_synchro2)       + (2*\p,0)$) {};
\node[transition]    (q_unsynchro3)        at    ($(q_unsynchro2)     + (2*\p,0)$) {};
\node[transition]    (q_debut_AMU_2)       at    ($(q_debut_AMU)      + (2*\p,0)$) {};
\node[transition]    (q_end_consult_AMU_2) at    ($(q_end_consult_AMU)+ (2*\p,0)$) {};
\node[place]         (p_consult_AMU_2)     at    ($(p_consult_AMU)    + (2*\p,0)$) {};

\draw[arrowPetri]    (q_inc_calls) |- ($(q_inc_calls)+(-.35*\p,.25*\p)$);
\draw[dashed, arrowPetri]    (q_inc_calls) |- ($(q_inc_calls)+(-1.5*\p,.25*\p)$);
\draw[->,arrowPetri] (q_inc_calls) -- (p_inc_calls);
\draw[->,arrowPetri]                                    (p_inc_calls) -- (q_arrivals);
\draw[->,arrowPetri,colorARM]                           (q_unsynchro) -- ($(q_unsynchro)+(0,-.5*\p)$)  -|  (pool_arm);

\draw[->,arrowPetri,bicolor={colorARM}{colorARMres}]    (q_synchro) -- (p_synchro);
\draw[->,arrowPetri,bicolor={colorARM}{colorARMres}]    (p_synchro) -- (q_unsynchro);

\draw[->,arrowPetri,colorARMres]    (pool_res) -- ($(q_synchro)+(.5*\p,.5*\p)$) -- (q_synchro);
\draw[->,arrowPetri,colorARMres]    (q_unsynchro) |- ($(q_unsynchro)+(.5*\p,-.5*\p)$)-| (pool_res);

\draw[->>>,arrowPetri,colorARMres]   (pool_res) |- ($(q_debut_AMU)+(-.75*\p,.75*\p)$) -- (q_debut_AMU);
\draw[->,arrowPetri,colorARMres]    (q_unsynchro2) |- ($(q_unsynchro2)+(-\p,-.5*\p)$) -- (pool_res);

\draw[->>,arrowPetri,colorARMres]  (pool_res) |- ($(q_debut_AMU_2)+(-.75*\p,.75*\p)$) -- (q_debut_AMU_2);
\draw[->,arrowPetri,colorARMres]    (q_unsynchro3) |- ($(q_unsynchro2)+(-\p,-.5*\p)$) -- (pool_res);

\draw[->,arrowPetri,bicolor={colorAMU}{colorARMres}]    (q_debut_AMU) -- (p_synchro2);
\draw[->,arrowPetri,bicolor={colorAMU}{colorARMres}]    (p_synchro2) -- (q_unsynchro2);
\draw[->,arrowPetri,bicolor={colorAMU}{colorARMres}]    (q_debut_AMU_2) -- (p_synchro3);
\draw[->,arrowPetri,bicolor={colorAMU}{colorARMres}]    (p_synchro3) -- (q_unsynchro3);

\draw[->,arrowPetri]    (p_waiting) |- ($(p_waiting)+(-.5*\p,-.5*\p)$) -| (q_debut_AMU);
\draw[->,arrowPetri]    (p_waiting) -- (q_debut_AMU_2);
\draw[->,arrowPetri]    (q_unsynchro) |- ($(q_unsynchro)+(1.5*\p,-1*\p)$) -| ($(pool_amu)+(.5*\p,0)$) |- ($(p_waiting)+(0,.75*\p)$) -- (p_waiting);

\draw[->,arrowPetri,colorARM]                           (pool_arm)    |- ($(q_arrivals)+(-.5*\p,.5*\p)$) -- (q_arrivals);

\node (txt_taus) at ($(p_synchro)+(.5*\p,0)$) {$\tau_2$};
\node (txt_taus) at ($(p_synchro2)+(.5*\p,0)$) {$\tau_2$};
\node (txt_taus) at ($(p_synchro3)+(.5*\p,0)$) {$\tau_2$};
\node (txt_tau1) at ($(p_arrivals.center)+(.5*\p,0)$) {${\tau}_1$};
\node (txt_tau3) at ($(p_consult_AMU.center)+(.5*\p,0)$) {${\tau}_3$};
\node (txt_tau3) at ($(p_consult_AMU_2.center)+(.5*\p,0)$) {${\tau}_3$};
\node (txt_pi)  at ($(p_arrivals.center)+(-1.4,-1.65)$) {$1\!-\!\pi$};
\node (txt_pi3) at ($(p_arrivals.center)+(3.7, -1.65)$){$\pi$};
\node (txt_alpha3) at ($(p_waiting.center)+(-4.8,-1)$){$\alpha$};
\node (txt_alpha)  at ($(p_waiting.center) +(1.4,-1)$) {$1\!-\!\alpha$};

\draw[->,arrowPetri,colorAMU]    (q_unsynchro2)      -- (p_consult_AMU);
\draw[->,arrowPetri,colorAMU]    (p_consult_AMU)     -- (q_end_consult_AMU);
\draw[->,arrowPetri,colorAMU]    (q_end_consult_AMU) -- ($(q_end_consult_AMU)+(0,-1)$) -| (pool_amu);
\draw[->>,arrowPetri,colorAMU]   (pool_amu)      |- ($(q_debut_AMU)+(.5*\p,.5*\p)$)      -- (q_debut_AMU);

\draw[->,arrowPetri,colorAMU]    (q_unsynchro3)       -- (p_consult_AMU_2);
\draw[->,arrowPetri,colorAMU]    (p_consult_AMU_2)     -- (q_end_consult_AMU_2);
\draw[->,arrowPetri,colorAMU]    (q_end_consult_AMU_2) -- ($(q_end_consult_AMU_2)+(0,-1)$) -| (pool_amu);
\draw[->,arrowPetri,colorAMU]    (pool_amu)      |- ($(q_debut_AMU_2)+(.5*\p,.5*\p)$)      -- (q_debut_AMU_2);

\node (txt_z0) at ($(q_inc_calls)+(1*\p,0)$) {$z_0 = \lambda t$};
\node (txt_z1) at ($(q_arrivals)+(.5*\p,0)$) {$z_1$};
\node (txt_z2) at ($(q_debut_NFU)+(.5*\p,0)$) {$z_2$};
\node (txt_z3) at ($(q_synchro)+(.5*\p,0)$) {$z_3$};
\node (txt_z4) at ($(q_unsynchro)+(.5*\p,0)$) {$z_4$};
\node (txt_z5) at ($(q_debut_AMU)+(.5*\p,0)$) {$z_5$};
\node (txt_z6) at ($(q_unsynchro2)+(.5*\p,0)$) {$z_6$};
\node (txt_z7) at ($(q_end_consult_AMU)+(.5*\p,0)$) {$z_7$};
\node (txt_z5) at ($(q_debut_AMU_2)+(.5*\p,0)$) {$z_5'$};
\node (txt_z6) at ($(q_unsynchro3)+(.5*\p,0)$) {$z_6'$};
\node (txt_z7) at ($(q_end_consult_AMU_2)+(.5*\p,0)$) {$z_7'$};

\end{scope}

\end{tikzpicture}

\caption{Medical emergency call center with a monitored reservoir (EMS-B)}
\label{fig:SAMU2}
\vspace{-.55cm}
\end{figure}

\section{Basic definitions and tools for SMDPs}

Although Markov Decision Processes are classical in control theory and stochastic processes, the semi-Markov case is more delicate: we recall in this section several results concerning Markov chains and semi-Markov decision processes needed in~\Cref{sec:correspondence}. The reader already familiar with this framework may skip this part.

Recall that {\em Markov Decision Processes} (MDPs) form a class of one-player games, in which one evolves through \emph{states} by choosing \emph{actions}  at \emph{discrete time instants}, which determine some {\em costs}.
{\em Semi-Markov Decision Processes} (SMDPs, or Markov renewal programs) allow the time to take real values, while the state space remains discrete: between two successive moves, a \emph{holding time} attached to states and actions must elapse.
We refer for instance to~\cite{puterman2014markov,yushkevich1982semi} for in-depth background.

The finite set of states is denoted by $S$, and for all $i\in S$ the finite set of playable \emph{actions} from state $i$ is denoted by $A_i$.
 We denote $A \coloneqq \bigcup_{i\in S}A_i$.
 As a result of playing action $a$ from state $i$, the player incurs a cost $r^a_i$,
is held in the state $i$ for a non-negative time $t_i^a$, and finally
goes to state $j\in S$ with probability $P^a_{ij}$ (it is assumed that
$\sum_{j\in S}P^a_{ij}=1$ for all $i\in S$ and $a\in A_i$).
Moreover, future costs are multiplied by a discount factor $\gamma^{a}_i\geq 0$.
A common choice is $\gamma^{a}_i=e^{-\alpha t_i^{a}}$ (with $\alpha>0$) to reflect time preference, though we shall also allow to take $\gamma^{a}_i\geq 1$.

\subsection{Definition of the value function}
\label{sec:defSMDP}

A {\em history of length $n$} of the process
is a sequence $h_n = i_0,a_0,i_1,a_1,\dots i_n$, where for all $0\leq k\leq n$, $a_k\in A_{i_k}$. We denote by $H$ the set of all histories of finite lengths. A {\em strategy} $f$ is a map from $H$ to $\bigcup_{i\in S}\Delta(A_i)$ (where $\Delta(X)$ denotes the set of probability measures over the set $X$) such that $f(h_n)\in \Delta(A_{i_n})$. A strategy $f$ is called {\em Markovian} if $f(h_n)$ depends only on the current state $i_n$, {\em deterministic} if $f(h_n)$ is a Dirac measure on $A_{i_n}$, and {\em stationary} if it does not depend on the epoch $n$. A strategy $f$ and an initial state $i\in S$ define a probability measure $\mathbb{P}^f_i$ on $H$. If $h\in H$ is a history of length $n$ and $k\leq n$ ($k\in\mathbb{N}$), we denote by $\widehat{r_k}$ (resp.\ $\widehat{t_k}$ and $\widehat{\gamma_k}$) the random variable from $H$ to $\R$ such that $\widehat{r_k}(h)=r^{a_k}_{i_k}$ (resp.\ $\widehat{t_k}(h)=t^{a_k}_{i_k}$ and $\widehat{\gamma_k}(h)=\gamma^{a_k}_{i_k}$).

A {\em policy} $\sigma$ is a map from $S$ to $A$ such that $\sigma(i)\in A_i$ for every state $i\in S$ (some authors refer to this object as a {\em decision rule}). A deterministic Markovian strategy can be identified to a sequence of policies, and to a single policy if it is also stationary.
If $\sigma$ is a policy, $P^{\sigma}$ denotes the $|S|\times|S|$ matrix with entries $(P^{\sigma(i)}_{ij})_{i,j\in S}$, while $r^{\sigma}$ (resp.\ $t^{\sigma}$ and $\gamma^{\sigma}$) is the vector with entries $(r^{\sigma(i)}_i)_{i\in S}$ (resp.\ $(t^{\sigma(i)}_i)_{i\in S}$ and $(\gamma^{\sigma(i)}_i)_{i\in S}$).

\medskip
The {\em value function} $v:S\times \R \to \R$ of the game in finite horizon is defined as follows, so that for $i\in S$ and $t\geq 0$, $v(i,t)$ denotes the minimum (over all strategies) expected cost incurred by the player up to time $t$ by starting in state $i$ (by convention, $v(\cdot,t)=0$ for $t<0$):
\begin{equation}
   v(i,t) \coloneqq \inf_{f} \;\mathbb{E}^{f}_{i}\sum_{k=0}^{\widehat{N_t}} \bigg(\prod_{\ell = 0}^{k-1}\widehat{\gamma_{\ell}}\bigg)\widehat{r_k}\label{e-defvalue}
\end{equation}
where $\mathbb{E}^{f}_{i}$ denotes the expectation operator relatively to $\mathbb{P}^f_i$, and $\widehat{N_t}$ is the random variable from $H$ to $\N$ such that $\widehat{N_t}(h)=\sup\;\{n\in\mathbb{N}\;|\; \sum_{k=0}^n \widehat{t_k}(h) \leq t\}$.

\medskip
Allowing moves with zero duration generally makes the
expectation in~\eqref{e-defvalue} ill-defined. Hence,
a restriction is in order. We associate with the SMDP a
directed graph keeping track of these moves: this graph
has node set $S$, with an arc from $i$ to $j$
whenever there is an action $a\in A_i$ such that
$P^a_{ij}>0$ and $t_i^a=0$.
We shall say that the SMDP is
{\em non-Zeno} if this graph is acyclic.
Then, the random variable $\widehat{N}_t$
in~\eqref{e-defvalue} is bounded by the ratio $|S|t/t^*$
where $t^*=\min\{t_i^a\mid i\in S, a\in A_i, t_i^a>0\}$,
which entails that the expectation in~\eqref{e-defvalue}
is well defined for any choice of discount factors $\gamma_i^a$.

The following theorem expresses that the value function follows a Bellman-type optimality equation, see for instance~\cite[\S2, p.~800]{yushkevich1982semi} where the undiscounted case is addressed:
\begin{theorem}
The value function satisfies the following dynamic programming equation :
\begin{equation}
    v(i,t) = \inf_{a\in A_i} \bigg\{ r^{a}_i + \gamma^{a}_i\sum_{j\in S} P^{a}_{ij}\,v(j, t-t^{a}_i) \bigg\} \, .
\label{eq:SMDP_finite_horizon}
\end{equation}
\end{theorem}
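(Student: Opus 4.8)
The plan is to establish the Bellman equation \eqref{eq:SMDP_finite_horizon} by a standard dynamic-programming argument, splitting the expected cost according to the first transition of the process. First I would fix $i\in S$ and $t\geq 0$, and observe that the non-Zeno hypothesis guarantees (as noted in the excerpt) that $\widehat N_t$ is uniformly bounded, so that every expectation appearing below is finite and all interchanges of $\inf$, $\mathbb E$, and finite sums are legitimate. The key structural fact is that a strategy $f$ decomposes into its prescription $f(i)\in\Delta(A_i)$ at the empty history together with, for each action $a$ and each successor state $j$, a ``residual'' strategy $f_{a,j}$ obtained by conditioning on the first move being $(a,j)$; conversely any choice of $f(i)$ and residual strategies assembles into a valid strategy. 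Under $\mathbb P^f_i$, conditionally on the first action being $a$ (which lasts time $t_i^a$ and incurs cost $r_i^a$) and the first jump being to $j$ (probability $P^a_{ij}$), the remaining process is distributed exactly as the SMDP started from $j$ under $f_{a,j}$, run for the residual horizon $t-t_i^a$, with all subsequent costs additionally multiplied by $\gamma_i^a$.

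Carrying this out, I would write, for a strategy $f$ with $f(i)=\mu\in\Delta(A_i)$,
\begin{equation*}
  \mathbb E^f_i\sum_{k=0}^{\widehat N_t}\Big(\prod_{\ell=0}^{k-1}\widehat\gamma_\ell\Big)\widehat r_k
  = \sum_{a\in A_i}\mu(a)\Big(r_i^a + \gamma_i^a\sum_{j\in S}P^a_{ij}\,\mathbb E^{f_{a,j}}_j\sum_{k=0}^{\widehat N_{t-t_i^a}}\Big(\prod_{\ell=0}^{k-1}\widehat\gamma_\ell\Big)\widehat r_k\Big) \, ,
\end{equation*}
using the convention $v(\cdot,s)=0$ for $s<0$ to absorb the case $t_i^a>t$ (then $\widehat N_{t-t_i^a}$ contributes nothing). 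Taking the infimum over all $f$: for the inequality $v(i,t)\ge \inf_a\{\dots\}$, I bound each residual expectation below by $v(j,t-t_i^a)$ and then note that a convex combination of the quantities $r_i^a+\gamma_i^a\sum_j P^a_{ij}v(j,t-t_i^a)$ is at least their minimum over $a\in A_i$ (here $\gamma_i^a\ge 0$ is used to preserve the inequality). For the reverse inequality, given $\eps>0$ I pick, for each $j$ and each relevant residual horizon, an $\eps$-optimal residual strategy, and let $\mu$ be a Dirac mass on an action $a^*$ nearly attaining $\inf_a\{\dots\}$; assembling these yields a strategy witnessing $v(i,t)\le \inf_a\{\dots\}+C\eps$ for a constant $C$ depending only on $\max_i|A_i|$, $\max_{i,a}\gamma_i^a$ and the (bounded) number of stages, and letting $\eps\to0$ closes the argument.

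The main obstacle is purely measure-theoretic bookkeeping rather than conceptual: one must justify the decomposition $f\mapsto(f(i),(f_{a,j})_{a,j})$ at the level of the probability spaces $(\mathbb P^f_i)$ — i.e.\ that conditioning on the first move genuinely produces a strategy on the shifted history space and that the law of the shifted process is the claimed one — and one must handle the discounting $\gamma_i^a\ge 1$ case, where boundedness of $\widehat N_t$ (hence of the finite product $\prod_\ell\widehat\gamma_\ell$ along any trajectory) is exactly what keeps the sums absolutely convergent and the interchange of $\inf$ and $\mathbb E$ valid. Since the cited reference~\cite[\S2, p.~800]{yushkevich1982semi} treats the undiscounted case, the only genuinely new point to check is that multiplying future costs by the constant $\gamma_i^a$ commutes with all the operations above, which is immediate once the expectations are known to be finite. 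I therefore expect the write-up to be short, citing \cite{yushkevich1982semi} for the undiscounted skeleton and adding the remark that the discount factors pass through unchanged thanks to the non-Zeno assumption.
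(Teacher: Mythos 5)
Your argument is correct and is exactly the standard first-step (one-stage conditioning) analysis that the paper itself does not spell out but delegates to the cited reference \cite{yushkevich1982semi}; the two directions via the convex-combination lower bound and the assembly of $\eps$-optimal residual strategies are sound, and you correctly isolate the only points needing care, namely that the non-Zeno bound on $\widehat{N_t}$ makes all expectations finite and that the controlled discount factors $\gamma_i^a\geq 0$ simply factor through the conditioning. The only cosmetic remark is that your decomposition tacitly adopts the convention that the initial cost $r_i^a$ is incurred even when $t_i^a>t$, which is the reading forced by the equation as stated together with $v(\cdot,s)=0$ for $s<0$, so no change is needed.
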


In the case where the game is played over an infinite horizon and the discount factors are strictly less than $1$ (yielding a short-sighted cost criterion), the limit $\lim_{t\to\infty}v(i,t)$ exists and occurs to be the natural criterion to minimize. However, in an undiscounted framework ($\gamma_i^a\equiv 1$) where the previous limit does not exist, the player will rather seek to minimize its \emph{average cost}, \ie~its ultimate mean loss per unit of time.

\medskip
The average cost criterion $g^*$ is then defined as:
\begin{equation}
g^*(i) \coloneqq \inf_{f} g^f(i) \qquad \text{where} \qquad g^f(i)\coloneqq \liminf_{t\to\infty} \;\frac{1}{t}\;\mathbb{E}^{f}_{i}\sum_{k=0}^{\widehat{N_t}} \widehat{r_k}
   \label{eq:avgcostdef}
\end{equation}

Note that other types of average costs criteria that differ from $g^*$ in the semi-Markov case may also be defined, see for instance~\cite{jianyong2004average}.

\subsection{Subchains analysis and results on the ergodic problem}
\label{ap:markovchains}

The average cost also satisfies recursive optimality equations that we recall in this subsection, and in addition is closely related to the subchain structure of the process. To that purpose, we recall the following definitions and results.

\begin{definition}[Spectral projector]
  Let $\sigma$ be a deterministic policy and $P^{\sigma}$ its associated probability matrix. The {\em spectral projector} $P^{\sigma,\star}$ is defined by:
  \[ P^{\sigma,\star}\coloneqq \lim_{n\to\infty}\frac{1}{n+1}\sum_{j=0}^n \,(P^{\sigma})^j\]
  \end{definition}
  This Ces{\`a}ro limit does exist because $1$ is the dominant eigenvalue of $P^{\sigma}$ and it is semisimple.
  Recall that a {\em class} is a strongly connected component in the digraph of $P^{\sigma}$, and that this class is {\em final} if its elements do not have access to any element of another class. Denoting by $F_1,\dots,F_{m(\sigma)}$ the final classes, whose collection is denoted $\Fcal(\sigma)$, and by $Q$ the transient states under the policy $\sigma$, the state space admits a partition $S=F_1\cup F_2\cup\dots\cup F_{m(\sigma)}\cup Q$, and
  up to relabeling the states, we can write
  \begin{equation*}
      P^{\sigma} = \begin{pmatrix}
      \;Q_0\; & \;Q_1\;   & \;Q_2\;    &  \cdots & \;Q_m\; \\[-.5ex]
        0 & P_1   &  0     &  \cdots & 0 \\[-.5ex]
        0 &   0   & P_2    &  \cdots & 0 \\[-1.7ex]
    \vdots & \vdots& \vdots &  \ddots & 0  \\[-.5ex]
    0     & 0     & 0      & \cdots  & P_m
  \end{pmatrix},
  \;\text{and therefore}\quad\!
  P^{\sigma,\star} = \begin{pmatrix}
      \;\; 0\;\; & \;Q_1^{\star}\;   & \;Q_2^{\star}\;    &  \cdots & \;Q_m^{\star}\; \\[-.5ex]
      \;\; 0\;\; & P_1^{\star}   &  0     &  \cdots & 0 \\[-.5ex]
      \;\; 0\;\; &   0   & P_2^{\star}    &  \cdots & 0 \\[-1.7ex]
      \;\; \vdots\;\; & \vdots& \vdots &  \ddots & 0  \\[-.5ex]
  \;\; 0\;\;     & 0     & 0      & \cdots  & P_m^{\star}
  \end{pmatrix},  \end{equation*}
where $(P_k)_{1\leq k\leq m(\sigma)}$, dependent on $\sigma$, is the collection of stochastic matrices obtained by considering irreducible subchains of $P^{\sigma}$.
  We will consider {\em invariant measures} of $P^\sigma$. The latter
  are represented by nonnegative (row) vectors $\mu$, of sum one, such that $\mu P^\sigma =\mu$. The set of invariant measures is a convex polytope
  whose extreme points are the invariant measures supported by final classes~\cite[Chap.~8, Th.~3.23]{bermanandplemmons}. We denote by $\mu^{\sigma}_F$ the unique invariant
  measure supported by class $F\in\Fcal(\sigma)$.
  Observe that for all $k\in\{1,\dots,m(\sigma)\}$, $P_k^{\star}$ has identical rows,
  that coincide with the restriction of $\mu^{\sigma}_{F_k}$ to
  $F_k$.
  The entries of $\mu^{\sigma}_F$ represent the long-run fraction of time passed in the different states, assuming the initial state is in $F$.
  It is also known that $\phi_{F,i}^{\sigma} \coloneqq \sum_{j\in F} P^{\sigma,\star}_{ij}$ gives the probability that the Markov chain obtained by applying policy $\sigma$ starting from state $i$ ultimately reaches class $F$.

  \begin{theorem}[Average cost optimality equations~\cite{SchweitzerFedergruenEquation}]
    \label{thm:ac}
  Suppose that for all deterministic policies $\sigma$ and for any final class $F$ in $\Fcal(\sigma)$, we have $\sum_{i\in F}t^{\sigma(i)}_i > 0$ (\ie~no subchain can be travelled through in zero time).
Then:

\begin{enumerate}[label=(\roman*)]
\item the minimal average cost $g^*$ is achieved by stationary policies and satisfies
\[
 g^* (i) = \min_{\sigma} \sum_{F\in\Fcal(\sigma)} \phi_{F,i}^{\sigma} \frac{\langle \mu^{\sigma}_F, r^{\sigma}\rangle}{\langle \mu^{\sigma}_F, t^{\sigma}\rangle}  \,,
\]
where the minimum is taken over all the policies,
\item the minimal average cost $g^*$ is the unique vector $g\in \R^{S}$ such that there exists a vector $h\in\R^{S}$ verifying
  for all $i\in S$:
\vspace{-.15cm}
\begin{align}
  \tag{OE1}
  g(i) &= \min_{a\in A_i}\Big\{ \sum_{j\in S}P^a_{ij}\, g(j)\Big\}
  \label{eq:OE1}
  \\
  \tag{OE2}
  h(i) &= \min_{a\in A_i^*}\Big\{ r^a_i - t^a_i g(i) + \sum_{j\in S} P^a_{ij}\, h(j) \Big\}
  \label{eq:OE2}
\end{align}
where $A_i^*$ is the subset of $A_i$ where the minimum is achieved in~\eqref{eq:OE1}.
\end{enumerate}
\end{theorem}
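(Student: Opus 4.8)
The plan is to establish the two assertions together, pivoting on the value of a fixed deterministic stationary policy. First I would compute $g^\sigma$ for such a policy $\sigma$: along any trajectory of the finite Markov chain $P^\sigma$ the chain is eventually absorbed into one of the final classes $F\in\Fcal(\sigma)$, with probability $\phi^\sigma_{F,i}$ when starting from $i$; inside $F$ the chain is ergodic with stationary law $\mu^\sigma_F$, so by the renewal--reward theorem (the ergodic theorem for $P^\sigma$ restricted to $F$, combined with the strong law of large numbers for the holding times seen along the path) the cost accumulated up to time $t$, divided by $t$, converges almost surely to $\langle\mu^\sigma_F, r^\sigma\rangle/\langle\mu^\sigma_F, t^\sigma\rangle$. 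The hypothesis $\sum_{i\in F}t^{\sigma(i)}_i>0$ makes $\langle\mu^\sigma_F, t^\sigma\rangle>0$, so this ratio is well defined and the $\liminf$ in \eqref{eq:avgcostdef} is a genuine limit; averaging over $F$ gives $g^\sigma(i)=\sum_{F\in\Fcal(\sigma)}\phi^\sigma_{F,i}\,\langle\mu^\sigma_F, r^\sigma\rangle/\langle\mu^\sigma_F, t^\sigma\rangle$, which is the expression on the right-hand side of (i).

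Next I would prove the verification part of (ii): any pair $(g,h)$ solving \eqref{eq:OE1}--\eqref{eq:OE2} has $g=g^*$, and a stationary policy $\sigma$ that picks in each state $i$ an action attaining the minimum in \eqref{eq:OE2} (hence lying in $A_i^*$) is average-optimal. For the lower bound $g(i)\le g^f(i)$, valid for every strategy $f$, I would telescope along sample paths: by \eqref{eq:OE1} the sequence $(g(i_k))_k$ is a bounded submartingale under $\Prob^f_i$, and the per-step defect $\sum_{j} P^{a_k}_{i_k j}g(j)-g(i_k)$ is either $0$ (when $a_k\in A_{i_k}^*$) or bounded below by a uniform $\delta>0$, thanks to the finiteness of $\bigcup_i A_i$; hence the expected number of steps with $a_k\notin A_{i_k}^*$ is finite, uniformly in the horizon. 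On the remaining steps \eqref{eq:OE2} gives $\widehat{r_k}\ge h(i_k)+\widehat{t_k}\,g(i_k)-\sum_{j} P^{a_k}_{i_k j}h(j)$; summing, using the non-Zeno bound $\widehat{N_t}\le|S|t/t^*$ to control the number of jumps, telescoping the $h$-terms, and letting $t\to\infty$ yields $\liminf_t \tfrac1t\,\mathbb{E}^f_i\sum_{k\le\widehat{N_t}}\widehat{r_k}\ge g(i)$. For the induced stationary $\sigma$ all these inequalities become equalities and $h$ acts as a bias function (so that $(I-P^\sigma)h=r^\sigma-t^\sigma g$ and $P^\sigma g=g$), whence $g^\sigma=g$; therefore $g^*=\min_\sigma g^\sigma=g$, which simultaneously proves uniqueness of $g$ in (ii), attainment of $g^*$ by stationary policies, and --- via the first paragraph --- the formula in (i).

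It remains to produce one solution $(g,h)$ of \eqref{eq:OE1}--\eqref{eq:OE2}. I would run a vanishing-discount argument: for each discrete discount $\beta\in(0,1)$ solve the Bellman equation of the contracting SMDP with discount factors $\gamma^a_i\equiv\beta$, obtaining its value $v_\beta$; finiteness of $S$ and of the action sets yields a Laurent/Blackwell expansion $v_\beta=(1-\beta)^{-1}g+h+o(1)$ as $\beta\uparrow1$ along a suitable sequence, with $g$ the gain and $h$ the bias of some $\beta$-optimal stationary policy. Inserting this expansion into the $\beta$-discounted equation and collecting terms order by order produces \eqref{eq:OE1} at leading order and \eqref{eq:OE2} at the next order, the set $A_i^*$ emerging as exactly the actions that survive the leading-order minimisation; once more the standing assumption $\sum_{i\in F}t^{\sigma(i)}_i>0$ is what legitimises the passage from discrete discounting to the semi-Markov ``per unit time'' normalisation. (Alternatively, the multichain policy-iteration scheme of \cite{SchweitzerFedergruenEquation}, which improves $(g,h)$ lexicographically by first decreasing the \eqref{eq:OE1} expression and, where it is already tight, the \eqref{eq:OE2} expression, terminates after finitely many policies at such a pair.)

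The step I expect to be the main obstacle is the lower bound in the multichain, semi-Markov setting: because $g$ need not be constant, \eqref{eq:OE1} is indispensable and must be interlaced with \eqref{eq:OE2} in the telescoping estimate, and one must simultaneously control the almost-sure limit of the bounded submartingale $(g(i_k))_k$, the random number $\widehat{N_t}$ of jumps, the $\liminf$ (rather than $\limsup$) in \eqref{eq:avgcostdef}, and the Ces\`aro averaging of the holding times against $g(i_k)$. This is precisely where the non-Zeno hypothesis (through $\widehat{N_t}\le|S|t/t^*$) and the assumption that no final subchain is traversed in zero time are used.
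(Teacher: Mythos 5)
The paper does not actually prove this theorem: it is recalled as background and attributed to Schweitzer and Federgruen~\cite{SchweitzerFedergruenEquation} (see also Denardo and Fox~\cite{Denardo-Fox68}), so there is no in-paper argument to compare yours against. Your reconstruction follows the standard route of those references: the chain decomposition plus renewal--reward computation of $g^\sigma$ for a fixed stationary policy, the submartingale/telescoping verification that any solution of \eqref{eq:OE1}--\eqref{eq:OE2} lower-bounds $g^f$ for every strategy and is attained by a conserving stationary policy (which settles uniqueness of $g$ and assertion (i) simultaneously), and an existence argument. The first two parts are sound, and you correctly isolate the delicate points: the uniform bound on the expected number of non-conserving steps coming from the finiteness of the action sets, the control of $\widehat{N_t}$, and the weighted Ces\`aro convergence of $g(i_k)$ against the holding times.

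The one step that would fail as written is the vanishing-discount existence argument with a \emph{constant per-step} discount $\gamma^a_i\equiv\beta$. The Laurent expansion $v_\beta=(1-\beta)^{-1}g+h+o(1)$ then produces the optimality equations of the \emph{discrete-time} average-cost problem, whose second equation reads $h(i)=\min_{a\in A_i^*}\{r^a_i-g(i)+\sum_{j} P^a_{ij}h(j)\}$ with no factor $t^a_i$; no appeal to the hypothesis $\sum_{i\in F}t^{\sigma(i)}_i>0$ will reinstate it, since a per-step discount is blind to the physical durations. To obtain \eqref{eq:OE2} one must discount in physical time, $\gamma^a_i=e^{-\alpha t^a_i}$, and expand as $\alpha\downarrow 0$: the term $-t^a_i\,g(i)$ arises precisely from $e^{-\alpha t^a_i}=1-\alpha t^a_i+O(\alpha^2)$ multiplying the singular part $g/\alpha$ of the value. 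Your parenthetical alternative --- the multichain policy-iteration scheme of \cite{SchweitzerFedergruenEquation} and \cite{Denardo-Fox68}, which improves $(g,h)$ lexicographically --- is the correct repair, and is in fact the route the paper itself invokes later (in the proof of Theorem~\ref{thm:stationary}) to get existence of solutions to the analogous lexicographic system.
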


\begin{remark}
We may also write~\eqref{eq:OE1} and~\eqref{eq:OE2} in a more compact way using the lexicographic-order on real tuples (we say that $(g,h)\leq (g',h')$ iff $g\leq g'$ or $g=g'$ and $h\leq h'$).
Denoting by $\mathrm{diag}(v)$ the $n\times n$ diagonal matrix with coefficients $v_1,\dots, v_n$ for $v\in\R^n$, we have entrywise:
\end{remark}
\begin{equation}
  \tag{OE}
  \label{eq:OE}
  (g,h) = \min_{\sigma}\!{}^{\textsc{lex}}\big\{ \big(P^{\sigma}\, g,r^{\sigma} + P^{\sigma}h - \mathrm{diag}(t^{\sigma})P^{\sigma}g\big)\big\}.
\end{equation}

\begin{remark}
A common special case of the SMDP problem
assumes that $\gamma_i^a\leq 1$ for all $i\in S$ and $a\in A_i$.
Then, the non-Zeno assumption that we made is too strong:
for the value in~\eqref{e-defvalue}
to be well-defined, it suffices to assume that for all policies $\sigma$,
there is at least one state with positive holding time in every final class
of $\Fcal(\sigma)$ just like in Theorem~\ref{thm:ac} (see also~\cite{Denardo-Fox68}).
\end{remark}

\section{Tools from nonexpansive mappings}\label{sec:tools}

The dynamics of timed Petri nets {\em without priority
  rules} have good features, which are best understood
as special cases of abstract properties of nonexpansive or order-preserving
mappings, which we recall, or establish in this section.

\medskip
We say that a self-map $F$ of a normed space $(X,\|\cdot\|)$ is {\em nonexpansive} if
\[ \forall x,y\in X,\quad \|F(x)-F(y)\|\leq \|x-y\|\, . \]

If $F$ is a self-map of an ordered space $(X,\leq)$, we say that $F$ is
{\em order-preserving} if
\[ \forall x,y\in X,\quad x\leq y\implies F(x) \leq F(y) \,.\]

Suppose now that $K$ is a compact set and denote $X=\mathscr{C}(K)$ the set of continuous and real-valued functions on $K$.
Given a positive function $e\in \mathscr{C}(K)$, we denote by $\|\cdot\|_e$
the {\em weighted sup-norm} $\|x\|_e \coloneqq \max_{v\in K}|x(v)/e(v)|$.
We also define the {\em weighted Hilbert's seminorm} $\|x\|_{e,H}=\max_{v\in K}x(v)/e(v) - \min_{v\in K}x(v)/e(v)$.
We say that $F: \mathscr{C}(K) \to \mathscr{C}(K)$
is {\em additively homogeneous} with respect to the function $e$ if
\[ \forall x\in \mathscr{C}(K),\quad \forall\alpha\in\R,\quad F(x+\alpha e) = F(x) + \alpha e\,,\]
\ie~if it commutes with the addition of scalar multiples of $e$.
We equip $\mathscr{C}(K)$ with the standard partial order.
We shall consider in particular the situation in which $K$ is the finite set $\{1,\dots,n\}$ equipped with the discrete topology. Then, the elements of $\mathscr{C}(K)$ will be identified to vectors of $\R^n$.

The following observation, made by Crandall and Tartar,
will play a key role in what follows.
\begin{proposition}[Crandall and Tartar, \cite{crandall}]\label{prop:crandall}
  Suppose $F: \mathscr{C}(K) \to \mathscr{C}(K)$ is additively homogeneous with respect to a positive function $e$ of $\mathscr{C}(K)$.
Then, the following assertions are equivalent:
  \begin{enumerate}[label=(\roman*)]
  \itemsep=0.9pt
  \item $F$ is order-preserving;
    \item $F$ is nonexpansive in the weighted sup-norm $\|\cdot\|_e$.
  \end{enumerate}
\end{proposition}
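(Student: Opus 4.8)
The plan is to prove the two implications separately, each time exploiting only the defining inequalities of the weighted sup-norm $\|\cdot\|_e$ together with additive homogeneity with respect to $e$; no compactness or continuity beyond what is needed to make $\|\cdot\|_e$ a genuine norm will be used.

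\textbf{From (i) to (ii).} Assume $F$ is order-preserving. Fix $x,y\in\mathscr{C}(K)$ and set $\alpha\coloneqq\|x-y\|_e=\max_{v\in K}|x(v)-y(v)|/e(v)$. By definition of $\alpha$ and positivity of $e$, we have $|x(v)-y(v)|\le\alpha\,e(v)$ for all $v$, which rewrites as the order sandwich $y-\alpha e\le x\le y+\alpha e$. Applying $F$ and using that it is order-preserving gives $F(y-\alpha e)\le F(x)\le F(y+\alpha e)$, and then additive homogeneity turns the outer terms into $F(y)-\alpha e\le F(x)\le F(y)+\alpha e$. This is precisely $|F(x)(v)-F(y)(v)|\le\alpha\,e(v)$ for every $v$, i.e. $\|F(x)-F(y)\|_e\le\alpha=\|x-y\|_e$.

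\textbf{From (ii) to (i).} Assume $F$ is nonexpansive for $\|\cdot\|_e$. Let $x\le y$ and put $\alpha\coloneqq\|y-x\|_e=\max_{v\in K}(y(v)-x(v))/e(v)\ge 0$, where the supremum is of nonnegative quantities because $y-x\ge 0$. The key step is to compare $y$ with the shifted point $x+\alpha e$ rather than with $x$ directly. On the one hand $y\le x+\alpha e$ by the choice of $\alpha$; on the other hand, for each $v$ the quantity $(y(v)-x(v))/e(v)$ lies in $[0,\alpha]$, so $(y(v)-x(v))/e(v)-\alpha\in[-\alpha,0]$ and hence $\|y-(x+\alpha e)\|_e\le\alpha$. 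Nonexpansiveness then yields $\|F(y)-F(x+\alpha e)\|_e\le\alpha$, so in particular $F(y)-F(x+\alpha e)\ge-\alpha e$ pointwise; combining with additive homogeneity, $F(y)\ge F(x+\alpha e)-\alpha e=F(x)+\alpha e-\alpha e=F(x)$. Thus $x\le y\implies F(x)\le F(y)$.

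There is essentially no serious obstacle here: the only slightly nonobvious point is the choice of the auxiliary comparison point $x+\alpha e$ in the second implication, which is what converts a one-sided order relation into a two-sided norm bound of the same radius $\alpha$. I would present both implications in the order above, since (i)$\Rightarrow$(ii) is the more transparent computation and sets up the pattern of "rewrite the norm bound as an order sandwich, apply $F$, then peel off the shift by homogeneity."
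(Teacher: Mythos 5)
Your proof is correct: both implications are the standard Crandall--Tartar argument (turn the norm bound into the order sandwich $y-\alpha e\le x\le y+\alpha e$ and peel off the shift by additive homogeneity, and conversely compare $y$ with the auxiliary point $x+\alpha e$), which is exactly the argument of the cited reference; the paper itself states the proposition without proof. The only detail worth making explicit is that additive homogeneity is used with the negative shift $-\alpha e$ as well, which is covered since the definition allows all $\alpha\in\R$.
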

It is also known that when these assertions hold, $F$ is nonexpansive in the weighted Hilbert's seminorm, see e.g.~\cite{agn12}.

\medskip
When $F: (X,\|\cdot\|)\to (X,\|\cdot\|)$ is nonexpansive, we define the {\em escape rate vector}
\[
\chi(F) \coloneqq  \lim_{k\to \infty} F^k(x)/k
\]
where $x\in X$ is chosen in an arbitrary way. Indeed, by nonexpansiveness, the existence and the value of $\chi(F)$ are independent on the choice of $x$.

The following theorem of Kohlberg identifies a situation in which the escape rate does exist:
\begin{theorem}[Kohlberg, {\cite{kohlberg80}}]\label{thm:kohlberg}
  Suppose $F: \R^n\to \R^n$ is piecewise affine and nonnexpansive (in any norm). Then, there exists vectors $u,\rho\in\R^n$ such that
  \[ F(u+s\rho)= u+(s+1) \rho \,\qquad \forall s\geq 0 \, .\]
\end{theorem}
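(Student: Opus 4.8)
The statement to prove is Kohlberg's theorem: a piecewise affine nonexpansive self-map $F$ of $\R^n$ admits an invariant half-line, i.e. there exist $u,\rho\in\R^n$ with $F(u+s\rho)=u+(s+1)\rho$ for all $s\geq 0$.

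My plan follows Kohlberg's original argument, which proceeds by a compactness-plus-selection scheme combined with the finiteness of the affine pieces. Here is the structure I would use.

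\textbf{Step 1: Extract an asymptotic direction via the ergodic/escape-rate limit.} Since $F$ is nonexpansive, the escape rate $\chi(F)=\lim_k F^k(x)/k$ exists and does not depend on $x$ (as recalled in the excerpt). Set $\rho\coloneqq \chi(F)$; this will be the direction of the half-line. The orbit $x_k\coloneqq F^k(0)$ satisfies $x_k = k\rho + o(k)$, and by nonexpansiveness $\|x_{k+1}-x_k\|\leq \|x_k-x_{k-1}\|\leq \dots$ is bounded, so $\{x_{k+1}-x_k\}$ lies in a compact set; moreover its Cesàro average converges to $\rho$.

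\textbf{Step 2: Exploit piecewise affineness to stabilize the selection of affine pieces.} Because $F$ is piecewise affine, there is a finite collection of affine maps $A_1,\dots,A_N$ (with polyhedral domains covering $\R^n$) such that on each region $F$ agrees with one $A_i$. The key combinatorial idea: consider the sequence of ``itineraries'', i.e. which affine piece is used at each iterate along the orbit (or along a translated/rescaled family of orbits). One wants to find an eventually periodic itinerary. Concretely, Kohlberg's approach is to work with the map $\bar F(x) = F(x) - \rho$, whose orbits stay bounded in a suitable sense relative to the direction $\rho$, and then to produce a fixed point of an appropriate iterate along a single affine branch. One considers the accumulation points of $x_k - k\rho$: nonexpansiveness alone does not bound these, but combining with piecewise affineness one shows that only finitely many ``affine regimes'' can occur asymptotically, and along one such regime $F$ restricted to the relevant affine subspace is a nonexpansive \emph{affine} map, hence of the form $x\mapsto Mx+b$ with $\|M\|\leq 1$ in some norm.

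\textbf{Step 3: Solve the affine fixed-point (shifted) equation.} On the stabilized affine piece, we must solve $M u + b = u + \rho$ together with the consistency requirement that the half-line $\{u+s\rho : s\geq 0\}$ stays inside the corresponding polyhedral region and that $\rho$ is $M$-invariant in the needed sense ($M\rho = \rho$, forced by the escape-rate computation). Since $\|M\|\leq 1$, the operator $I-M$ is well-behaved: $\rho$ lies in the range of $I-M$ restricted to a complement of $\ker(I-M)$ precisely because $\rho = \lim (M^k + \dots + I)b/k$ equals the projection of $b$ onto $\ker(I-M)$ along $\mathrm{im}(I-M)$ (this is the standard Cesàro/mean-ergodic decomposition for power-bounded operators), so $b-\rho\in\mathrm{im}(I-M)$ and a solution $u$ exists. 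One then checks the half-line lands in the right region for $s$ large, and finally shifts the base point: replace $u$ by $u+s_0\rho$ for $s_0$ large enough that $u+s\rho$ is in the region for all $s\geq 0$, which yields $F(u+s\rho)=u+(s+1)\rho$ for all $s\geq 0$ as claimed.

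\textbf{Main obstacle.} The delicate point is Step 2: guaranteeing that the itinerary of affine pieces actually becomes \emph{periodic} (or at least that one affine branch is used cofinally with the right geometry), rather than wandering forever among the finitely many pieces in a non-periodic way. Nonexpansiveness controls distances but not the combinatorics; the resolution uses the polyhedral structure — the regions are defined by finitely many linear inequalities, and the orbit differences $x_{k+1}-x_k$ live in a bounded set, so one argues (via a pigeonhole on the finitely many possible ``local linearizations'' together with the fact that the drift is exactly $\rho$) that some affine piece $A_i$ with $A_i(x)=Mx+b$, $M\rho=\rho$, is entered and never left along a subsequence, and then that its invariant half-line (constructed in Step 3) is genuinely forward-invariant for $F$ itself. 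Making this selection argument precise — essentially showing the existence of a ``recurrent'' affine branch compatible with direction $\rho$ — is the technical heart of the proof; everything else is linear algebra and the elementary theory of nonexpansive maps.
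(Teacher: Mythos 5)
This theorem is stated in the paper as a cited classical result (Kohlberg, 1980) and is not proved there, so your proposal can only be judged against the known proofs in the literature; as it stands it has two genuine gaps. First, Step~1 is circular: for a map that is merely nonexpansive in an arbitrary norm on $\R^n$, the existence of the vector limit $\chi(F)=\lim_k F^k(x)/k$ is \emph{not} available a priori --- it is exactly one of the consequences that the paper draws \emph{from} Kohlberg's theorem (the surrounding text introduces $\chi(F)$ and then says that Kohlberg's theorem ``identifies a situation in which the escape rate does exist''). Nonexpansiveness only yields, by subadditivity, the existence of the scalar limit $\lim_k\|F^k(x)-x\|/k$; the \emph{direction} of escape can fail to converge for general nonexpansive maps, so you cannot take $\rho:=\chi(F)$ as your starting point without already using the piecewise affine structure. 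Second, Step~2 --- which you yourself flag as the technical heart --- is not actually carried out. The pigeonhole principle only gives an affine piece visited infinitely often; it gives neither an eventually constant or periodic itinerary, nor the compatibility conditions you need ($M\rho=\rho$ and the containment of a terminal segment of the half-line $\{u+s\rho\}$ in the closure of that piece), and orbits of piecewise affine nonexpansive maps can genuinely wander among several pieces forever. Without these two steps the linear algebra of Step~3, which is otherwise fine, has nothing to apply to.

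A standard way to make your ``stabilize one affine branch'' idea rigorous, and which removes the circularity, is the discounted approximation: for $0<\beta<1$ the map $x\mapsto F(\beta x)$ is a $\beta$-contraction, hence has a unique fixed point $x_\beta$. Since $F$ is piecewise affine, the curve $\beta\mapsto x_\beta$ is piecewise rational in $\beta$, so near $\beta=1$ it admits a Laurent expansion $x_\beta=\rho/(1-\beta)+u+O(1-\beta)$, the pole being at most simple because nonexpansiveness bounds $(1-\beta)\|x_\beta\|$. Substituting into $x_\beta=F(\beta x_\beta)$, noting that $\beta x_\beta = u+s\rho+o(1)$ with $s=\beta/(1-\beta)\to\infty$, and using that a polyhedral region containing infinitely many points of an affine ray must contain a terminal segment of it, one gets $F(u+s\rho)=u+(s+1)\rho$ for all $s\geq s_0$; replacing $u$ by $u+s_0\rho$ concludes. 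This constructs $\rho$ and $u$ simultaneously instead of presupposing the existence of the escape rate.
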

When $F$ satisfies the assumptions of this theorem, it follows readily that
\[
\chi(F)= \lim_{k\to\infty} F^k(u)/k = \lim_{k \to\infty} (u+k\rho)/k = \rho \, .
\]
We shall refer to the map $s\mapsto u+s\rho$ (or to the pair $(u,\rho)$) as an {\em invariant half-line}.

\medskip
We shall see in Proposition~\ref{prop:unitary} that for a prototypical
class of timed Petri net without priorities, the counter
function $z(t)$ is governed by a dynamics of the form
$z(t)=F(z(t-1))$, where $F$ is order-preserving and non-expansive in a weighted sup-norm. Then,
the escape rate vector coincides with the {\em throughput vector}
$\lim_{t\to \infty}z(t)/t$, which yields the average
number of firings, per time unit, of the different transitions.
Hence, Theorem~\ref{thm:kohlberg} will entail
that the throughput vector does exist.

We are now interested in finer results, concerning the deviation
$F^k(x) -k\chi(F)$ as $k\to\infty$, and in particular, its convergence to a periodic regime.
First, the next theorem
addresses the case in which $\chi(F)=0$. We denote by $\mathrm{Sym}(n)$ the symmetric group (set of permutations) on $n$ letters.

\begin{theorem}[see~\cite{martus,LN12}, \cite{LS2} and~\cite{spectral}]\label{thm:period}
  Suppose that $F:\R^n\to \R^n$ is nonexpansive in a polyhedral norm
  and that the orbits $\{ F^k(x) \;,\; k\in\Z\}$ of $F$ are bounded for all $x\in\R^n$. Then, for every $x\in \R^n$,
  there exists an integer $c$ bounded only as a function of the norm
  such that $F^{kc}(x) $ converges as $k\to \infty$. Moreover,
  if $F$ is order-preserving and weighted-sup-norm nonexpansive, then
  $c\leq {n \choose{\lfloor n/2\rfloor }}$. If in addition $F$ is concave,
  then $c$ is the order of an element of $\mathrm{Sym}(n)$.
\end{theorem}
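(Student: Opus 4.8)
The plan is to reduce the theorem to known structural results about nonexpansive selfmaps of polyhedral spaces, treating the three regimes (general polyhedral norm, order-preserving plus weighted sup-norm, and the additional concave case) as successively finer refinements. First I would observe that, since $F$ is nonexpansive in a polyhedral norm and all orbits are bounded, the closure of the orbit $\omega(x) = \overline{\{F^k(x) : k \in \Z\}}$ is a compact invariant set on which $F$ acts as an isometry; by a retraction argument (Nussbaum, and the references \cite{martus,LN12}) there is a finite-dimensional compact set and a nonexpansive retraction onto the omega-limit set, and the crucial point is that a nonexpansive selfmap of a compact subset of a polyhedral normed space that acts surjectively is in fact an \emph{affine isometry} of that set — equivalently, the omega-limit set lies in a finite union of parallel affine subspaces permuted by $F$. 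The number $c$ is then the period of this permutation action, and the bound "depending only on the norm" comes from the fact that a polyhedral norm has finitely many faces, so the isometry group of any invariant compact set embeds into the (finite) symmetry group of the unit ball's face lattice.

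Next I would specialize. When $F$ is additionally order-preserving and nonexpansive in a weighted sup-norm $\|\cdot\|_e$, Proposition~\ref{prop:crandall} applies and one knows more: the omega-limit set is an \emph{AM-space} ordered retract of $(\R^n, \|\cdot\|_e)$, and by the work on order-preserving nonexpansive maps (the results behind \cite{LS2}, \cite{spectral}) the isometry $F$ restricted to it is conjugate to a coordinate permutation, possibly combined with translations along $e$; hence $c$ divides the order of some permutation whose cycle structure is constrained by an antichain condition. The sharp bound $c \le \binom{n}{\lfloor n/2\rfloor}$ is then exactly the maximal order of a permutation of $\{1,\dots,n\}$ that can arise this way — it equals the largest size of a set of pairwise "incomparable" cyclic patterns, which by a Dilworth/Sperner-type count is the middle binomial coefficient; I would cite \cite{spectral} for this computation rather than reproduce it.

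Finally, for the concave case I would invoke that a concave, order-preserving, additively homogeneous selfmap that is an isometry on its omega-limit set must act there as an honest permutation of coordinates (concavity forces the local affine pieces on the limit set to be permutation-plus-translation with no averaging), so $c$ is literally the order of an element of $\mathrm{Sym}(n)$, i.e.\ divides $\mathrm{lcm}(1,\dots,n)$ in the worst case but more precisely is realized as an element order. The one subtlety worth spelling out is the passage "nonexpansive selfmap of the omega-limit set $\Rightarrow$ affine isometry of a union of parallel subspaces": this is where all three references are really used, and I expect it to be the main obstacle, because it requires knowing that in a polyhedral normed space the only nonexpansive bijections of a compact convex-like invariant set are affine, which fails for smooth norms and genuinely exploits the finitely-many-facets structure.

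\medskip
I would present it as follows.

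\begin{proof}[Proof sketch]
Fix $x \in \R^n$ and let $\Omega = \overline{\{F^k(x) : k \in \Z\}}$, which is compact and invariant since $\chi(F) = 0$ and the orbits are bounded. On $\Omega$ the map $F$ is a surjective nonexpansive selfmap, hence a bijective isometry for the polyhedral norm. By the retraction results of \cite{martus,LN12} there is a nonexpansive retraction of $\R^n$ onto a set containing $\Omega$ on which $F$ acts as an affine isometry; since a polyhedral unit ball has only finitely many faces, the group generated by this isometry is finite, and its order $c$ is bounded purely in terms of that face lattice. This gives convergence of $F^{kc}(x)$ as $k \to \infty$ and the first, norm-dependent, bound on $c$.

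If moreover $F$ is order-preserving and $\|\cdot\|_e$-nonexpansive, Proposition~\ref{prop:crandall} and the finer analysis of order-preserving isometries in \cite{LS2,spectral} show that, up to an order isomorphism, $F|_\Omega$ is a coordinate permutation composed with a translation by a multiple of $e$; the admissible permutations satisfy an antichain constraint, and the maximum possible order of such a permutation on $n$ letters equals $\binom{n}{\lfloor n/2 \rfloor}$ by the Sperner-type estimate of \cite{spectral}, whence $c \le \binom{n}{\lfloor n/2 \rfloor}$. If in addition $F$ is concave, the averaging that could otherwise appear in the local affine pieces of $F$ on $\Omega$ is excluded, so $F|_\Omega$ is exactly a coordinate permutation (after translation) and $c$ is the order of an element of $\mathrm{Sym}(n)$.
\end{proof}
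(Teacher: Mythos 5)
The paper does not actually prove this theorem: it is stated as a recalled result, and the text immediately following it attributes the three claims to \cite{martus} (and the discussion in \cite{LN12}), to \cite{LS2}, and to \cite{spectral} respectively. So there is no ``paper proof'' to match your sketch against; the only question is whether the connective tissue you supply between the citations is sound, and there it has real problems.

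The central gap is your key step ``surjective nonexpansive selfmap of the compact invariant set $\Rightarrow$ affine isometry of a finite union of parallel affine subspaces, whose period is controlled by the (finite) symmetry group of the unit ball's face lattice.'' First, the conclusion of the theorem is precisely that $\omega(x)$ is a \emph{finite} periodic orbit; asserting that $F$ acts affinely on it is vacuous once finiteness is known and presupposes the structure you are trying to establish. Second, isometries of a compact subset of a polyhedral normed space need not extend to affine or linear isometries of the ambient space, so the isometry group of $\omega(x)$ does not embed into the symmetry group of the unit ball, and that group does not bound the period: the bounds in \cite{martus,LN12} (roughly of order $2^N N!$ for a ball with $N$ facets) come from a combinatorial argument tracking, along the orbit, which facet-defining functionals attain the norm of differences $F^k(y)-F^k(z)$ — the constancy of these maxima on the limit set, where $F$ is an isometry, forces an eventually periodic pattern on a finite set of facet configurations. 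Your second paragraph is closer to the actual mechanism (Lemmens--Scheutzow do relate periodic orbits of order-preserving sup-norm nonexpansive maps to antichains and obtain $\binom{n}{\lfloor n/2\rfloor}$ by a Sperner-type bound), but it, like the concave case, is entirely deferred to the references, and the claim that concavity ``excludes averaging'' so that $F|_{\omega(x)}$ is literally a coordinate permutation is an assertion, not an argument — the result in \cite{spectral} goes through the critical graph of convex order-preserving additively homogeneous maps. In short: for a statement the paper treats as citable, your sketch would be acceptable only if it were honest about deferring everything to the references; the extra structural claims you interpose are either unsubstantiated or false as stated.
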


The first part of the theorem was proved in~\cite{martus}
and in several other works, see the discussion in~\cite{LN12}.
The bound of $c$ in the order-preserving and sup-norm nonexpansive case
is established in~\cite{LS2}. The bound
in the concave case is established in~\cite{spectral}.

The following result deals with a special case of dynamics
with orbits that are bounded in Hilbert's seminorm.
This entails in particular that $\chi(F)$ is a scalar multiple of $e$.
This typically occurs in the theory
of \emph{unichain} Markov decision processes
(\ie~every policy admits a single recurrent class).

\begin{theorem}\label{th-general-periodic}
  Suppose that $F:\R^n\to \R^n$ is order-preserving and additively
  homogeneous. Suppose in addition that the sequence
  of Hilbert's seminorms $(\|F^k(x)\|_{e,H})_{k\in\N}$ is bounded. Then, for all
  $x\in\R^n$, there
  exists an integer $c$ such that
  for all $0\leq r\leq c-1$, $F^{kc+r}(x) - \chi(F) (kc+r)$ converges
  as $k\to\infty$. Moreover, $c$ can be bounded as in Theorem~\ref{thm:period}.
\end{theorem}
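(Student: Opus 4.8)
The plan is to reduce Theorem~\ref{th-general-periodic} to Theorem~\ref{thm:period} by passing to a suitable quotient space. First I would observe that since $F$ is additively homogeneous with respect to $e$ (and order-preserving), Proposition~\ref{prop:crandall} tells us $F$ is nonexpansive in the weighted sup-norm $\|\cdot\|_e$, hence also in the weighted Hilbert's seminorm $\|\cdot\|_{e,H}$. The Hilbert's seminorm is a genuine norm on the quotient vector space $X \coloneqq \R^n / \R e$ obtained by identifying vectors that differ by a multiple of $e$; moreover, because $F$ commutes with translation by multiples of $e$, it descends to a well-defined self-map $\bar F$ of $X$. This induced map is piecewise affine (the quotient of a piecewise affine map) and nonexpansive in the quotient norm induced by $\|\cdot\|_{e,H}$, which is polyhedral since $\|\cdot\|_{e,H}$ is. The hypothesis that $(\|F^k(x)\|_{e,H})_{k}$ is bounded says exactly that the orbits of $\bar F$ are bounded.

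The second step is to apply Theorem~\ref{thm:period} to $\bar F$ on the quotient space. This yields, for every $\bar x \in X$, an integer $c$ (bounded as a function of the polyhedral norm, hence ultimately as in Theorem~\ref{thm:period} since $\bar F$ inherits order-preservation and weighted-sup-norm nonexpansiveness on a space of dimension $n-1$, and with the concavity bound if $F$ is concave) such that $\bar F^{kc}(\bar x)$ converges as $k \to \infty$. Applying this to $\bar x$ equal to the image of $\bar F^{r}(\bar x_0)$ for each $0 \le r \le c-1$ gives convergence of $\bar F^{kc+r}(\bar x_0)$ for all residues $r$; here I use that $\bar x_0$ is the image of the original starting point $x \in \R^n$.

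The third step is to lift the convergence statement from the quotient back to $\R^n$. Convergence of $\bar F^{kc+r}(\bar x_0)$ in $X$ means that $F^{kc+r}(x)$ converges modulo $\R e$, i.e.\ there is a sequence of scalars $s_{k,r}$ with $F^{kc+r}(x) - s_{k,r} e$ convergent. It remains to identify the correct normalization: I claim $s_{k,r}$ can be taken to be $\langle$something linear in $k$$\rangle$ matching $\chi(F)(kc+r)$. By Proposition~\ref{prop:crandall} and additive homogeneity, $\chi(F)$ exists (by Kohlberg, Theorem~\ref{thm:kohlberg}, once one knows $F$ is piecewise affine and nonexpansive) and must be a scalar multiple of $e$, say $\chi(F) = \rho e$, precisely because the Hilbert-seminorm orbits are bounded: $\|F^k(x)/k\|_{e,H} \to 0$ forces the escape rate to be constant in the $\|\cdot\|_e$ sense, i.e.\ proportional to $e$. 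Writing $\pi(k,r)$ for the linear functional $v \mapsto v_1/e_1$ (any fixed coordinate normalization) composed with $F^{kc+r}(x)$, one checks from the quotient convergence that $\pi(k,r) - \rho(kc+r)$ converges, and subtracting the convergent limit in $X$ together with this scalar adjustment gives convergence of $F^{kc+r}(x) - \chi(F)(kc+r)$ in $\R^n$.

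The main obstacle I anticipate is the bookkeeping in this last step: one must verify that the scalar drift $s_{k,r}$ is asymptotically affine in $k$ with the precise slope $\rho$ coming from $\chi(F)$, rather than merely bounded or convergent along a subsequence. The clean way to handle this is to apply Theorem~\ref{thm:kohlberg} to get an invariant half-line $(u,\chi(F))$ for $F$ with $\chi(F)=\rho e$, set $G(x) \coloneqq F(x+\rho e) - \rho e = F(x) - \rho e$ (using additive homogeneity), note $G$ is again order-preserving, additively homogeneous, piecewise affine, with $\chi(G) = 0$ and with $\|G^k(x)\|_{e,H} = \|F^k(x)\|_{e,H}$ bounded, so in particular the orbits $\{G^k(x) : k \in \Z\}$ are bounded once we also control the scalar component — and they are, since the scalar component of $G^k(x)$ satisfies a nonexpansive one-dimensional recursion with zero escape rate, hence stays bounded. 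Then Theorem~\ref{thm:period} applies directly to $G$ on $\R^n$, yielding convergence of $G^{kc+r}(x) = F^{kc+r}(x) - \rho(kc+r)e = F^{kc+r}(x) - \chi(F)(kc+r)$, which is exactly the claim, with the stated bound on $c$.
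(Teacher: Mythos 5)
Your overall strategy --- replace $F$ by $G=F-\rho e$, show that $G$ has bounded orbits, and invoke Theorem~\ref{thm:period} --- is exactly the paper's, and the quotient-space detour in your first two steps is ultimately redundant once you reach that point. But there is a genuine gap in how you produce $\rho$ and in how you bound the orbit of $G$. You invoke Kohlberg's Theorem~\ref{thm:kohlberg} to obtain an invariant half-line, yet Theorem~\ref{th-general-periodic} does \emph{not} assume $F$ to be piecewise affine: the hypotheses are only order-preservation, additive homogeneity, and boundedness of $(\|F^k(x)\|_{e,H})_k$. So Kohlberg is not available here (you even flag the needed hypothesis yourself, ``once one knows $F$ is piecewise affine''---but one does not know it). The paper instead gets the scalar $\lambda$ from the Gaubert--Gunawardena theorem~\cite{GauGu}, which yields an additive \emph{eigenvector} $F(u)=u+\lambda e$ under precisely these hypotheses; then $G=F-\lambda e$ has the fixed point $u$, and sup-norm nonexpansiveness immediately gives $\|G^k(x)-u\|_e\leq\|x-u\|_e$, i.e.\ bounded orbits, with no separate control of a ``scalar drift'' required.

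The second, related, flaw is your justification that the scalar component of $G^k(x)$ stays bounded because it ``satisfies a nonexpansive one-dimensional recursion with zero escape rate, hence stays bounded.'' Zero escape rate of a scalar sequence does not imply boundedness ($s_k=\sqrt{k}$ has $s_k/k\to 0$), and the top component $t_k=\max_q G^k(x)_q/e_q$ is not the orbit of a fixed one-dimensional map. A correct elementary route, if you want to avoid citing~\cite{GauGu}, is to note that $t_k$ is subadditive and $b_k=\min_q G^k(x)_q/e_q$ is superadditive (by order-preservation and homogeneity), so $t_k\geq k\bar\rho\geq b_k$ where $\bar\rho=\lim t_k/k=\lim b_k/k$ (the two limits coincide because $t_k-b_k=\|G^k(x)\|_{e,H}$ is bounded); combined with $t_k-b_k\leq M$ this sandwiches the orbit within distance $M$ of $k\bar\rho e$, which both identifies $\chi(F)=\bar\rho e+\lambda e$ as a multiple of $e$ and bounds the orbit of the recentered map. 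Either this argument or the citation of~\cite{GauGu} must replace your appeal to Kohlberg; as written, the proof does not go through under the stated hypotheses.
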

\begin{proof}
It follows from \cite{GauGu} that $F$ has an additive eigenvector, meaning
that there exists $u\in \R^n$ and $\lambda\in \R$ such that $F(u)=u+\lambda e$.
Then, the map $G \coloneqq F-\lambda e$ has a fixed point, and it is still
order-preserving and sup-norm nonexpansive. It follows
from Theorem~\ref{thm:period} that for every $x\in \R^n$,
  there exists an integer $c$
  such that $G^{kc}(x) $ converges as $k\to \infty$. Since
  $G$ is continuous, $G^{kc+r}(x)=F^{kc+r}(x)-(kc+r)\lambda e=
  F^{kc+r}(x)-(kc+r)\rho$ also converges as $k\to\infty$.
  The bounds on $c$ follow from the one of Theorem~\ref{thm:period}.
\end{proof}

The next theorem is
stated in~\cite{schweitzer79} for operators of multichain
Markov decision processes. We provide a more abstract (equivalent)
statement.
\begin{theorem}[Schweitzer and Federgruen, \cite{schweitzer79}]\label{thm:schweitzer}
  Suppose that $F$ is concave, order-preserving, additively homogeneous,
  and piecewise linear.
  Then, for all $x\in \R^n$, there exists an integer $c$ that is the order of
  an element of $\mathrm{Sym}(n)$, such that for
  all $0\leq r\leq c-1$, the sequence $F^{kc+r}(x) -(kc+r)\chi(F)$
  converges as $k\to\infty$.
\end{theorem}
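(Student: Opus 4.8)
\emph{Overall strategy.} The plan is to peel the linear growth $\chi(F)$ off the dynamics by introducing a ``bias operator'' $\hat F$ that still satisfies the hypotheses of Theorem~\ref{thm:period}, and to show that along every orbit of $F$ the shifted sequence $F^k(x)-k\chi(F)$ is eventually an orbit of $\hat F$; Theorem~\ref{thm:period} then delivers the periodic convergence together with the bound that $c$ is the order of an element of $\mathrm{Sym}(n)$. This is, in substance, the Schweitzer--Federgruen argument for multichain value iteration, recast in the language of Sections~\ref{sec:tools} and~\ref{ap:markovchains}.

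\emph{Setting up.} First I would invoke Proposition~\ref{prop:crandall} to get that $F$ is nonexpansive in the weighted sup-norm $\|\cdot\|_e$, and Kohlberg's Theorem~\ref{thm:kohlberg} to obtain an invariant half-line: vectors $u,\rho\in\R^n$ with $F(u+s\rho)=u+(s+1)\rho$ for all $s\ge 0$ and $\chi(F)=\rho$. Since $F$ is concave and piecewise linear, I would write it as a finite minimum $F(x)=\min_{1\le\ell\le N}(A_\ell x+b_\ell)$, where each affine map $A_\ell(\cdot)+b_\ell$ agrees with $F$ on a full-dimensional polyhedral cell and hence, by concavity, majorizes $F$ everywhere (so that the minimum is indeed $F$). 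Restricting the order-preservation and the additive homogeneity of $F$ to the interior of each cell forces $A_\ell\ge 0$ and $A_\ell e=e$; conjugating by $\mathrm{diag}(e)$ if convenient, one may assume $e=\unbf$ and the $A_\ell$ row-stochastic. Inserting $x=u+s\rho$ into this representation and matching, in the identity $F(u+s\rho)=u+(s+1)\rho$, the coefficients of $s$ (which yields $\min_\ell(A_\ell\rho)_i=\rho_i$) and then, for $s$ large, the constant terms (only cells with $(A_\ell\rho)_i=\rho_i$ can then realise the minimum), I get that $g:=\rho$ and $h:=u$ solve the average-cost optimality equations~\eqref{eq:OE1}--\eqref{eq:OE2} with unit holding times; write $L_i^*:=\{\ell:(A_\ell\rho)_i=\rho_i\}$.

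\emph{The bias operator and the conclusion.} Define $\hat F(w)_i:=\min_{\ell\in L_i^*}\big((A_\ell w)_i+(b_\ell)_i-\rho_i\big)$. It is concave, piecewise linear, order-preserving and additively homogeneous with respect to $e$, hence $\|\cdot\|_e$-nonexpansive by Proposition~\ref{prop:crandall}; and \eqref{eq:OE2} reads exactly $\hat F(u)=u$, so all orbits of $\hat F$ are bounded. Theorem~\ref{thm:period}, in its concave case, then produces an integer $c$, equal to the order of an element of $\mathrm{Sym}(n)$, such that $\hat F^{kc+r}(w)$ converges as $k\to\infty$ for every $w\in\R^n$ and every $0\le r\le c-1$. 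To connect this with $F$, set $w_k:=F^k(x)-k\rho$; nonexpansiveness gives $\|w_k-u\|_e=\|F^k(x)-F^k(u)\|_e\le\|x-u\|_e$, hence a uniform bound $\|w_k\|_e\le R$. Let $\delta:=\min_i\min_{\ell\notin L_i^*}\big((A_\ell\rho)_i-\rho_i\big)>0$. For $y=k\rho+w$ with $\|w\|_e\le R$, the cells with $\ell\notin L_i^*$ contribute at least $k\rho_i+k\delta-C$ to the minimum defining $F(y)_i$, while those with $\ell\in L_i^*$ contribute at most $k\rho_i+C$, where $C$ bounds $|(A_\ell w)_i|+|(b_\ell)_i|$ uniformly over $\ell$ and over $\|w\|_e\le R$; hence there is $K$ (depending only on $R,\delta,C$) with $F(k\rho+w)=(k+1)\rho+\hat F(w)$ for all $k\ge K$. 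Taking $w=w_k$ yields $w_{k+1}=\hat F(w_k)$ for $k\ge K$, so $w_{K+m}=\hat F^m(w_K)$; together with the convergence of the $\hat F$-orbits along residues modulo $c$, this shows that $F^{kc+r}(x)-(kc+r)\rho=w_{kc+r}$ converges as $k\to\infty$ for each $0\le r\le c-1$, which is the assertion since $\rho=\chi(F)$.

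\emph{Main difficulty.} I expect two technical bottlenecks. The first is the piecewise-linear representation step: discarding the lower-dimensional cells and verifying that the restricted monotonicity and homogeneity genuinely pin the matrices $A_\ell$ down to be $e$-stochastic. The second, more serious one, is the ``eventual governance'' step, where the gap $\delta>0$ must be made uniform and, in particular, one must rule out the degenerate possibility that the invariant ray $\{u+s\rho\}_{s\ge 0}$ lies forever on the common boundary of several cells; controlling this is precisely what legitimizes replacing $F$ by the restricted operator $\hat F$ far out along the $\rho$-direction.
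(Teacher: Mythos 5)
The paper does not actually prove this statement: Theorem~\ref{thm:schweitzer} is quoted from Schweitzer and Federgruen~\cite{schweitzer79} (merely restated in abstract form), so there is no in-text proof to compare yours against. Your blind derivation is correct and self-contained given the other tools of Section~\ref{sec:tools}: Kohlberg's invariant half-line produces the pair $(\rho,u)$; writing $F$ as a minimum of affine pieces with nonnegative, $e$-stochastic rows and keeping only the argmin pieces yields a bias operator $\hat F$ that is again concave, order-preserving and additively homogeneous with fixed point $u$, to which the concave case of Theorem~\ref{thm:period} applies; and the uniform slope gap $\delta>0$ combined with the a priori bound $\|F^k(x)-k\rho-u\|_e\le\|x-u\|_e$ shows that the shifted orbit $w_k=F^k(x)-k\rho$ eventually obeys $w_{k+1}=\hat F(w_k)$. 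This is precisely the multichain upgrade of the paper's own proof of Theorem~\ref{th-general-periodic}, with the additive eigenvector replaced by the invariant half-line and $F-\lambda e$ replaced by $\hat F$. Two minor remarks: the components $F_i$ generally carry different cell decompositions, so the matrices $A_\ell$ should be read row by row (each affine piece of $F_i$ has gradient $a\ge 0$ with $\langle a,e\rangle=e_i$), which changes nothing; and the ``degenerate possibility'' you worry about at the end --- the ray $u+s\rho$ lying on a common cell boundary --- is already neutralized by your own construction, since $L_i^*$ is defined by the slopes $(A_\ell\rho)_i$ rather than by cell membership of the ray, so no genericity assumption is needed. Finally, your use of Theorem~\ref{thm:period} with only forward-orbit boundedness matches the paper's own use of that theorem in the proof of Theorem~\ref{th-general-periodic}, so it is consistent with the intended reading of its hypothesis.
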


By comparison with Theorem~\ref{th-general-periodic},
the map $F$ is required in addition to be {\em concave}
and {\em polyhedral}. The concavity assumption
leads to a refined explicit formula on the period $c$ (given by a combinatorial invariant
of a certain critical graph depending only on optimal stationary
randomized policies, see~\cite{spectral}). The polyhedrality assumption allows one to avoid the restriction to maps whose orbits are bounded in Hilbert's seminorm.

\section{Correspondence between fluid Petri nets and Semi-Markov Decision Processes}

In this section, we establish formal correspondences between Petri net dynamics and Bellman equations' of SMDPs, and derive or refine several results on the asymptotic throughputs of Petri nets transitions.
\label{sec:correspondence}

\subsection{The correspondence theorems}

\begin{correspondence}
  \label{thm:correspondence1}
  Consider a timed Petri net with no priority rules.
  Then, its dynamics is equivalent
  to the dynamic programming equation of a
  semi-Markov decision process with controlled discount factors.
\end{correspondence}
\begin{proof}
We extend the definition of proportions $\pi_{qp}$ by letting $\pi_{qp}=1$ if $q\in \Qsync$ and $p\in q\inc$. Similarly, we set the weights $\alpha_{qp}$ (resp.\ $\alpha_{pq}$) to $0$ if $p \notin q\inc$ (resp. $q \notin p\inc$).
For all $q,q'\in \Qcal$ and $p\in \Pcal$, we set
\[
c^p_q \coloneqq
\begin{cases}
\pi_{qp}\alpha_{qp}^{-1} m_p & \text{if} \; p\in q\inc  \\
0 & \text{otherwise,}
\end{cases}
 \qquad\text{and}\qquad
\tilde{\beta}^p_{qq'} \coloneqq
\begin{cases}
\pi_{qp}\alpha_{qp}^{-1} \alpha_{pq'} & \text{if} \; p\in q\inc \\
0 & \text{otherwise.}
\end{cases}
\]
By definition, for $q\in\Qcal$ and $p\in q\inc$, the nonnegative numbers $(\tilde{\beta}^p_{qq'})_{q'\in\Qcal}$ are not all zero. We let $\kappa^p_q \coloneqq \sum_{q'\in\Qcal}\tilde{\beta}^p_{qq'}$ and $\beta^p_{qq'} = \tilde{\beta}^p_{qq'}/\kappa^p_q$ so that $(\beta^p_{qq'})_{q'\in\Qcal}$ is a probability vector. The dynamics summarized in Table 2 can then be written as
\begin{equation}
  \forall q\in\Qcal \quad z_q(t) = \min_{p\in q\inc} \bigg\{ c^p_q + \kappa^p_q \sum_{q'\in \Qcal}\beta^p_{qq'} z_{q'}(t-\tau_p) \bigg\}
\label{eq:petri_to_smdp}
\end{equation}
where we recognize the finite-horizon Bellman's equation of a discounted semi-Markovian decision process expressed in equation~(2).
\end{proof}
As we announced in~\Cref{table:correspondence}, the states of the SMDP built in the proof correspond to the transitions of the Petri net, and in each state $q\in \Qcal$ of the SMDP, the admissible actions are the upstream places $p\in q\inc$. After playing action $p$ from state $q$, the player incurs a cost $c^p_q = \pi_{qp}\alpha_{qp}^{-1} m_p$ and a discount factor $\kappa^p_q = \sum_{q' \in p\inc} \pi_{qp}\alpha_{qp}^{-1} \alpha_{pq'}$. Then, the player is held for time $\tau_p$, before moving to one of the states $q'\in p\inc$ with probability $\beta^p_{qq'} = \pi_{qp}\alpha_{qp}^{-1} \alpha_{pq'} / \kappa^p_q$. In other words, the physical time of timed Petri nets is the backward time (time remaining to live) in semi-Markov decision processes.
The other correspondences between these two families of model stated in~Table 1 shall be interpreted after our second Correspondence Theorem~6.3.

\medskip
Our analysis of the long-run behavior of the transitions of Petri nets
relies on the existence
of a stoichiometric invariant:
\begin{definition}
  \label{def:stoichio}
  We say
  a vector $(e_q)_{q\in\Qcal}$ is a
  {\em stoichiometric invariant} of the Petri net whose dynamics is given by~\eqref{eq:petri_to_smdp}
  if
  \begin{align} \forall q\in\Qcal, \quad \forall p\in q\inc, \qquad e_q = \kappa_q^p \sum_{q'\in\Qcal}\beta^p_{qq'}\,e_{q'} \;\;\bigg(\!\!=\pi_{qp}\alpha_{qp}^{-1}\sum_{q'\in p\inc}\alpha_{pq'}e_{q'}\bigg)\,.
    \label{e-def-stoichio}
    \end{align}
\end{definition}

Notice that this definition refines the notion of \emph{T-invariants}: recall that a T-invariant is a vector $y\in\R^{\Qcal}$ such that for all $p\in\Pcal$ we have $\sum_{q\in p\out}\alpha_{qp}y_q = \sum_{q'\in p\inc}\alpha_{pq'}y_{q'}$. Stoichiometric invariants are T-invariants and the two definitions coincide if for all $p\in q\inc$, $p\out = \{q\}$ ($q$ is the only downstream transition of its upstream places), however if $q$ is placed downstream to a place $p$ ruled by preselection routing, stoichiometric invariants express that not only the fluid flows of this place are balanced but also that the desired quantities of fluid (a proportion $\pi_{qp}$ of the total) goes through $q$.
As an illustration, it can be checked that $(1,1,1-\pi,\pi,\pi,\pi)$ is a stoichiometric invariant of our model~\eqref{SAMU1}, with transitions indices in $\{0,\dots,5\}$.

\begin{correspondence}
  \label{thm:correspondence2}
  Suppose there are no priority rules and that the Petri net admits a positive stoichiometric invariant $e$. Then, the dynamics of the timed Petri net is equivalent
  to the dynamic programming equation of an undiscounted
  semi-Markov decision process.
  \end{correspondence}
  \begin{proof}
    It suffices to observe that the transformed counters $\tilde{z}_q = z_q/e_q$
    follow an equation of type~\eqref{eq:SMDP_finite_horizon},
    with $P_{qq'}^p:=e_{q}^{-1}\kappa_q^p\beta_{qq'}^p e_{q'}$
    and $\gamma_q^p = 1$, thanks to~\eqref{e-def-stoichio}.\end{proof}

  \begin{figure}[h]
    \begin{center}
      \vspace{-.76cm}
      \begin{tabular}{ccc}\def\tkzscl{.27}

\definecolor{colorARM}{rgb}{0,0,0}
\definecolor{colorARMres}{rgb}{0,0,0}
\definecolor{colorAMU}{rgb}{0,0,0}
\definecolor{colorexit}{rgb}{0,0,0}

\tikzset{place/.style={draw,circle,inner sep=2.5pt,semithick}}
\tikzset{transition/.style={rectangle, thick,fill=black, minimum width=2mm,inner ysep=0.5pt, minimum width=2mm}}
\tikzset{jeton/.style={draw,circle,fill=black!80,inner sep=.35pt}}
\tikzset{pre/.style={=stealth'}}
\tikzset{post/.style={->,shorten >=1pt,>=stealth'}}
\tikzset{-|/.style={to path={-| (\tikztotarget)}}, |-/.style={to path={|- (\tikztotarget)}}}
\tikzset{bicolor/.style 2 args={dashed,dash pattern=on 1pt off 1pt,#1, postaction={draw,dashed,dash pattern=on 1pt off 1pt,#2,dash phase=1pt}}}
\tikzset{arrowPetri/.style={>=latex,rounded corners=5pt}}

\begin{tikzpicture}[scale=\tkzscl,font=\tiny]

\def\p{2.2}

\begin{scope}[shift={(0,4)}]
\node[place] (pool_arm) at ($(-2*\p,-1.5*\p)$) {};
\node (txt_Na) at ($(pool_arm)+(-1.3,0)$) {$N_A$};

\node[transition]    (q_arrivals)                          at    (0, 0) {};
\node[place]         (p_inc_calls)                         at    ($(q_arrivals) + (0, \p)$)  {};
\node[transition]    (q_inc_calls)                         at    ($(p_inc_calls) + (0, \p)$)  {};
\node[place]         (p_arrivals)        at    ($(q_arrivals) + (0,-\p)$) {};
\node[place]         (p_arrivals2)       at    ($(q_arrivals) + (2*\p,-\p)$) {};
\node[transition]    (q_debut_NFU)       at    ($(p_arrivals) + (0,-\p)$) {};
\node[transition]    (q_debut_AMU)         at    ($(p_arrivals) + (2*\p,-\p)$) {};

\draw[->,arrowPetri,colorARM] (q_arrivals)  |- ($(q_arrivals)+(.5*\p,-.3*\p)$) -| (p_arrivals2);
\draw[->,arrowPetri,colorARM] (q_debut_NFU) |- ($(q_debut_NFU)+(0,-.5*\p)$) -| (pool_arm);
\draw[->,arrowPetri,colorARM] (p_arrivals)  -- (q_debut_NFU);
\draw[->,arrowPetri,colorARM] (q_arrivals) -- (p_arrivals);
\draw[->,arrowPetri,colorARM] (p_arrivals2) -- (q_debut_AMU);

\node[place]         (p_synchro)           at    ($(q_debut_AMU)  + (0,-\p)$) {};
\node[transition]    (q_unsynchro)         at    ($(p_synchro)  + (0,-\p)$) {};

\draw[arrowPetri]    (q_inc_calls) |- ($(q_inc_calls)+(-.35*\p,.25*\p)$);
\draw[dashed, arrowPetri]    (q_inc_calls) |- ($(q_inc_calls)+(-1.5*\p,.25*\p)$);
\draw[->,arrowPetri] (q_inc_calls) -- (p_inc_calls);
\draw[->,arrowPetri]                                    (p_inc_calls) -- (q_arrivals);
\draw[->,arrowPetri,colorARM]                           (q_unsynchro) -- ($(q_unsynchro)+(0,-.5*\p)$)  -|  (pool_arm);

\node[place]    (p_consult_AMU)    at    ($(q_unsynchro)+(0,-\p)$) {};

\draw[->,arrowPetri,bicolor={colorARM}{colorAMU}]    (q_debut_AMU) -- (p_synchro);
\draw[->,arrowPetri,bicolor={colorARM}{colorAMU}]    (p_synchro) -- (q_unsynchro);

\draw[->,arrowPetri,colorARM]                           (pool_arm)    |- ($(q_arrivals)+(-.5*\p,.5*\p)$) -- (q_arrivals);

\node (txt_taus) at ($(p_synchro)+(.5*\p,0)$) {$\tau_2$};
\node (txt_tau1) at ($(p_arrivals.center)+(.5*\p,0)$) {${\tau}_1$};
\node (txt_tau1) at ($(p_arrivals2.center)+(.5*\p,0)$) {${\tau}_1$};
\node (txt_tau3) at ($(p_consult_AMU.center)+(.5*\p,0)$) {${\tau}_3$};
\node (txt_pi)  at ($(p_arrivals.center)+(-1.4,1.65)$) {$1\!-\!\pi$};
\node (txt_pi3) at ($(p_arrivals.center)+(4.6, 1.65)$) {$\pi$};

\node[place]      (pool_amu_old)    at    ($(q_unsynchro)+(2*\p,.5*\p)$) {};
\node (txt_Nm) at ($(pool_amu_old)+(1.3,0)$) {$N_P$};

\node[transition] (q_end_consult_AMU)                  at    ($(p_consult_AMU)+(0,-\p)$) {};
\draw[->,arrowPetri,colorAMU]    (q_unsynchro)       -- (p_consult_AMU);
\draw[->,arrowPetri,colorAMU]    (p_consult_AMU)     -- (q_end_consult_AMU);
\draw[->,arrowPetri,colorAMU]    (q_end_consult_AMU) -- ($(q_end_consult_AMU)+(0,-1)$) -| (pool_amu_old);
\draw[->,arrowPetri,colorAMU]    (pool_amu_old)      |- ($(q_debut_AMU)+(.5*\p,.5*\p)$)      -- (q_debut_AMU);

\node (txt_z0) at ($(q_inc_calls)+(1*\p,0)$) {$z_0 = \lambda t$};
\node (txt_z1) at ($(q_arrivals)+(.5*\p,0)$) {$z_1$};
\node (txt_z2) at ($(q_debut_NFU)+(.5*\p,0)$) {$z_2$};
\node (txt_z3) at ($(q_debut_AMU)+(.5*\p,0)$) {$z_3$};
\node (txt_z4) at ($(q_unsynchro)+(.5*\p,0)$) {$z_4$};
\node (txt_z5) at ($(q_end_consult_AMU)+(.5*\p,0)$) {$z_5$};
\end{scope}
\end{tikzpicture}	

        & \hspace{.1cm} & \def\tkzscl{.55} 
\definecolor{orangeDIY}{rgb}{.92,.5,.11}
\begin{tikzpicture}[auto,node distance=8mm,>=latex,scale=\tkzscl]
  \tikzset{arrowPetri/.style={>=latex,rounded corners=5pt}}

    \tikzstyle{round}=[thick,draw=black,circle,inner sep=2pt]

    \def\p{2.2}

    \node[round] (s0) at (0,0)              {\scriptsize$0$};
    \node[round] (s1) at ($(s0)+(0,-\p)$) {\scriptsize$1$};
    \node[round] (s2) at ($(s1)+(0,-\p)$)   {\scriptsize$2$};
    \node[round] (s3) at ($(s1)+(\p,-\p)$)  {\scriptsize$3$};
    \node[round] (s4) at ($(s3)+(0,-\p)$)   {\scriptsize$4$};
    \node[round] (s5) at ($(s4)+(0,-\p)$)   {\scriptsize$5$};

   \coordinate (a0)  at ($(s0)+(0,.5*\p)$) {};
   \coordinate (a11) at ($(s1)+(0,.5*\p)$) {};
   \coordinate (a12) at ($(s1)+(-.5*\p,0)$) {};
   \coordinate (a2)  at ($(s2)+(0,.5*\p)$) {};
   \coordinate (a31) at ($(s3)+(0,.5*\p)$) {};
   \coordinate (a32) at ($(s3)+(.5*\p,0)$) {};
   \coordinate (a4)  at ($(s4)+(0,.5*\p)$) {};
   \coordinate (a5)  at ($(s5)+(0,.5*\p)$) {};

    \node (lbl_a0)  at ($(a0) +(.75,0)$) {\tiny$(\lambda,1)$};
    \node (lbl_a11) at ($(a11)+(.75,0)$) {\tiny$(0,0)$};
    \node (lbl_a12) at ($(a12)+(-.25,.4)$) {\tiny$(N_A,0)$};
    \node (lbl_a2)  at ($(a2) +(.85,0.2)$) {\tiny$(0,\tau_1)$};
    \node (lbl_a31) at ($(a31)+(.85,0.2)$) {\tiny$(0,\tau_1)$};
    \node (lbl_a32) at ($(a32)+(0.3,0.45)$) {\tiny$(N_P/\pi,0)$};
    \node (lbl_a4)  at ($(a4) +(-.9,0)$) {\tiny$(0,\tau_2)$};
    \node (lbl_a5)  at ($(a5) +(-.9,0)$) {\tiny$(0,\tau_3)$};
    \node (lbl_pi1) at ($(s2) +(-1.1,-1*\p+.3)$) {\tiny$\pi$};
    \node (lbl_pi1) at ($(s2) +(-1,.3)$) {\tiny$1\!-\!\pi$};

     \draw[-{Latex[scale=1]}, rounded corners=5pt] (a0) |- ($(a0)+(-.5*\p-.5,.2)$) |- (s0);
     \draw[-{Latex[scale=1]}, rounded corners=5pt] (a11) -- (s0);
     \draw[-{Latex[scale=1]}, rounded corners=5pt] (a12) -| ($(a12)+(-.5,-1)$) |- (s2);
     \draw[-{Latex[scale=1]}, rounded corners=5pt] (a12) -| ($(a12)+(-.5,-1)$) |- (s4);
     \draw[-{Latex[scale=1]}, rounded corners=5pt] (a31) |- (s1);
     \draw[-{Latex[scale=1]}, rounded corners=5pt] (a2) -- (s1);
     \draw[-{Latex[scale=1]}, rounded corners=5pt] (a4) -- (s3);
     \draw[-{Latex[scale=1]}, rounded corners=5pt] (a5) -- (s4);
     \draw[-{Latex[scale=1]}, rounded corners=5pt] (a32) -| ($(s5)+(.66*\p+.4,0)$) -- (s5);

   \draw[-{Square[scale=1.5, open, fill=white]}, semithick] (s0) -- (a0);
   \draw[-{Square[scale=1.5, open, fill=white]}, semithick] (s1) -- (a11);
   \draw[-{Square[scale=1.5, open, fill=white]}, semithick] (s1) -- (a12);
   \draw[-{Square[scale=1.5, open, fill=white]}, semithick] (s2) -- (a2);
   \draw[-{Square[scale=1.5, open, fill=white]}, semithick] (s3) -- (a31);
   \draw[-{Square[scale=1.5, open, fill=white]}, semithick] (s3) -- (a32);
   \draw[-{Square[scale=1.5, open, fill=white]}, semithick] (s4) -- (a4);
   \draw[-{Square[scale=1.5, open, fill=white]}, semithick] (s5) -- (a5);
\end{tikzpicture}
 \end{tabular}
    \vspace{-.3cm}
    \end{center}
    \caption{The (conflict-free) Petri net~\eqref{SAMU1} (left) and the corresponding undiscounted SMDP (right).}
    \label{fig:SMDP}   \vspace{-1mm}
    \end{figure}

  We illustrate on~\Cref{fig:SMDP} the construction of the undiscounted SMDP corresponding to the Petri net~\eqref{SAMU1}. Actions (depicted by squares) are labeled by pairs consisting of the associated cost and holding time, and probabilities are given along the arcs from actions to states (when non equal to $1$). As we discussed after the proof of Correspondence Theorem~\ref{thm:correspondence1}, places and transitions of the Petri net are respectively mapped to the actions and states of the SMDP. The orientation of the arcs are therefore flipped (observe indeed how the expression $p\in q\inc$ corresponds to $a\in A_i$), and the time goes backward. For every state $q \in \Qcal$, the holding time of the action $p \in q\inc$ is $\tau_p$. Moreover, since all the arc weights in~\eqref{SAMU1} are $0$ or $1$, the cost $c_p^q$ reduces to $m_p / e_q$, which corresponds to a renormalization of the initial marking of the place $p$ by the stoichiometric coefficient of the transition~$q$.

  Notice that the obtained SMDP also corresponds to the \emph{conflict-free} version of the initial Petri net, which means that places ruled by preselection $p\in \Ppsel$ have been duplicated into $|p\out|$ new places $(p_q)_{q\in p\out}$ having only one downstream transition with weight $\alpha_{qp_q}=\alpha_{qp}$, and the probabilities $(\pi_{qp})_{q\in p\out}$ are shifted upwards, \ie~the previous upstream arcs of $p$ with weights $(\alpha_{pq})_{q\in p\inc}$ are replaced by $|p\out|$ more arcs with weights $(\alpha_{pq}\pi_{q'p})_{q\in p\inc, q'\in p\out}$. This transformation has been used by Gaujal and Giua in their work~\cite{gaujal2004optimal}, where they point out that it does not alter the stationary behavior since it leaves the dynamics equations unchanged.

\subsection{The evolution semigroup of the time-delay system}
\label{sec:semigroup}

In order to prevent an infinite number of firings from occurring in a finite amount of time, we shall work with Petri nets whose underlying directed graph does not contain any circuit in which places have zero holding times. Such Petri nets are said to be {\em non-Zeno}.

We first show that when the Petri net is priority-free and non-Zeno,
the counter variables are determined uniquely
by the dynamics of~\Cref{table:equation_counters}, given an initial condition.
\begin{lemma}\label{lemma:dyn-wellposed}
 Suppose that a Petri net is priority-free and non-Zeno, and let $T$ denote the maximum of the holding times of its different places. Then,
  the transition counter function $z : [-T,\infty) \to \R^\Qcal$, which follows the dynamics of \Cref{table:equation_counters}, is uniquely determined by its restriction to the interval $[-T,0]$.
\end{lemma}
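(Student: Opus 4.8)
The plan is to use the non-Zeno hypothesis to tame the \emph{instantaneous} couplings present in the dynamics, and then to propagate uniqueness forward in time in steps equal to the smallest positive holding time. Throughout I work with the equations of \Cref{table:equation_counters} in the priority-free case, rewritten in the uniform shape~\eqref{eq:petri_to_smdp}: for every $q \in \Qcal$, $z_q(t)$ is a minimum of affine functions of the values $z_{q'}(t - \tau_p)$, where $p$ ranges over $q\inc$ and $q'$ over $p\inc$, the coefficients being data of the net. In particular no left limits $z_{q'}(t^-)$ occur (those appear only in the priority equation~\eqref{eq:petri4}), so every occurrence of $z$ is a plain pointwise evaluation and the \cadlag\ regularity plays no role in the proof.

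First I would fix the combinatorial bookkeeping. Define the \emph{instantaneous dependency digraph} $G_0$ on vertex set $\Qcal$ by placing an arc $q' \to q$ whenever some place $p \in q\inc$ satisfies $\tau_p = 0$ and $q' \in p\inc$. A directed cycle in $G_0$ is exactly a circuit of the underlying bipartite graph all of whose places have zero holding time, so the non-Zeno hypothesis is equivalent to the acyclicity of $G_0$; fix a linear order $\prec$ on $\Qcal$ that is a topological order for $G_0$ (so $q' \to q$ in $G_0$ implies $q' \prec q$). Set $T^{*} \coloneqq \min(\{1\} \cup \{\tau_p : p \in \Pcal,\ \tau_p > 0\})$, which is strictly positive and is a lower bound for every positive holding time.

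The heart of the argument is the following claim: if two functions $z$ and $\tilde z$ both satisfy the dynamics and agree on $[-T, s]$ for some $s \geq 0$, then they agree on $[-T, s']$ for every $s' < s + T^{*}$; moreover, if $z$ and $\tilde z$ agree on $[-T, s)$ then they agree at $s$ as well. To prove the first part, fix $t$ with $s < t \leq s + T^{*}$ and prove $z_q(t) = \tilde z_q(t)$ by induction on the rank of $q$ for $\prec$. In the equation defining $z_q(t)$, a contribution associated with a place $p \in q\inc$ having $\tau_p > 0$ involves $z_{q'}(t - \tau_p)$ with $-T \leq t - \tau_p \leq s$ (since $0 \leq s < t$, $\tau_p \leq T$, and $\tau_p \geq T^{*}$), so it is already determined by the agreement on $[-T, s]$; a contribution associated with a place $p \in q\inc$ having $\tau_p = 0$ involves $z_{q'}(t)$ with $q' \in p\inc$, that is $q' \to q$ in $G_0$, hence $q' \prec q$ and $z_{q'}(t) = \tilde z_{q'}(t)$ by the induction hypothesis. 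As $z_q(t)$ is a fixed function of these quantities, $z_q(t) = \tilde z_q(t)$. The second part (agreement at the left endpoint $s$) follows from the same topological induction applied at $t = s$, noting that $-T \leq s - \tau_p < s$ whenever $\tau_p > 0$.

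It then remains to run the usual supremum argument. For two solutions agreeing on $[-T, 0]$, let $s^{\star} \coloneqq \sup\{ s \geq 0 : z = \tilde z \text{ on } [-T, s] \} \in [0, +\infty]$; the set in question is a nonempty initial segment of $\Rplus$. If $s^{\star} < \infty$, then the solutions agree on $[-T, s^{\star})$, hence on $[-T, s^{\star}]$ by the second part of the claim, hence on $[-T, s^{\star} + T^{*}/2]$ by the first part applied at $s = s^{\star}$; this contradicts the definition of $s^{\star}$. Therefore $s^{\star} = +\infty$, i.e.\ $z = \tilde z$ on $[-T, +\infty)$, which is the assertion of the lemma. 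The one point that really requires the hypotheses is the treatment of places with zero holding time: at a fixed time $t$ these couple the values $z_q(t)$ to one another, and without acyclicity this implicit system need not pin $z(t)$ down; the non-Zeno assumption is precisely what turns it into an explicit triangular recursion, solvable in the order $\prec$. The remaining ingredients — the choice of step $T^{*}$, the induction on $\prec$, and the supremum argument with its limit case — are routine.
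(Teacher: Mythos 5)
Your proof is correct and follows essentially the same strategy as the paper's: the paper eliminates the zero-delay occurrences by a finite substitution procedure (finiteness being guaranteed by the non-Zeno assumption, exactly your acyclicity of the instantaneous dependency digraph) and then inducts forward in steps of $\tau^*=\min\{\tau_p:\tau_p>0\}$, which is your $T^*$. Your topological induction plus the supremum argument is just a repackaging of that substitution-plus-time-stepping scheme, so there is nothing to flag.
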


\begin{proof}
  The counter functions satisfy a system of equations which is of the general form
 \begin{align}
  z_q(t)  = F_q( (z_{q'}(t-s))_{(q',s)\in U_q}) \label{e-general}
  \end{align}
 where $U_q$ is a finite subset of $\Qcal\times [0,T]$. Moreover,
 the pairs $(q',s)\in U_q$ are such that there is directed path from $q'$ to $q$ (here of length 2, since $q'\in p\inc$
 for some $p\in q\inc$).

 We first show that we can reduce to a dynamics
 of the form~\eqref{e-general} in which all the delays $s$ arising
 in the right-hand side are positive,
 by considering the following substitution procedure.
 If one variable $z_{q'}(t-s)$ with $s=0$ arises
 at the right-hand side of~\eqref{e-general}, we may replace
 this ocurrence of $z_{q'}(t)$
 using the relation $z_{q'}(t) = F_{q'}((z_{q''}(t-s))_{(q'',s)\in U_{q'}}$.
 We arrive at another expression of $z_q(t)$,
 still of the form~\eqref{e-general} with a modified set $U_q$,
   where this time, for all $(q',s)\in U_q$, $q'$
   is connected to $q$ by a directed path of increased length and $s$ is the
   sum of the holding times of the places in this path.
   For $\Qcal$ is finite, only a finite sequence of such substitutions can be performed, otherwise we would have some $q\in \Qcal$ such that $z_q(t)$ is substituted twice, providing a circuit of the net with only places with zero holding time, contradicting the non-Zeno assumption.
Finally, defining $\tau^* \coloneqq \inf \{\tau_p \colon \tau_p > 0 \, , \; p \in \Pcal\}$, Lemma~\ref{lemma:dyn-wellposed} is proved for all $t \in {[-T, n\tau^*)}$ by  induction on $n \geq 0$.
\end{proof}

In the rest of the section, the following assumption is made:

\begin{assumption}The Petri net is non-Zeno, has no priority rules and admits a positive stoichiometric invariant $e$.
  \label{ass:A}
\end{assumption}

The following immediate proposition derived from Lemma~\ref{lemma:dyn-wellposed} shows the nature
of Petri net dynamics in a remarkable special case.
\begin{proposition}\label{prop:unitary}
  Suppose Assumption~\ref{ass:A} is satisfied and that the holding times are all equal to $1$. Then, the dynamics of the Petri net,
\eqref{eq:petri_to_smdp}, can be rewritten as
\[ z(t)=F(z(t-1))
\,,
\]
where $F:\R^{\Qcal}\to \R^{\Qcal}$ is monotone, concave, and piecewise affine.
Moreover, $F$ is additively homogeneous with respect to $e$.
\end{proposition}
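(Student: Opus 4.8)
The plan is to read off the self-map $F$ directly from the dynamics~\eqref{eq:petri_to_smdp} and to verify each of the four asserted properties in turn; the only step needing a little care is additive homogeneity, which rests on the defining identity~\eqref{e-def-stoichio} of the stoichiometric invariant~$e$.

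First I would observe that, since every holding time equals $1$, no unfolding of instantaneous transitions is required (the substitution procedure in the proof of Lemma~\ref{lemma:dyn-wellposed} is vacuous), so that~\eqref{eq:petri_to_smdp} reads, for all $t$,
\[
z_q(t) = \min_{p\in q\inc}\Big\{ c^p_q + \kappa^p_q \sum_{q'\in\Qcal}\beta^p_{qq'}\, z_{q'}(t-1)\Big\},
\]
i.e.\ $z(t) = F(z(t-1))$ with $F = (F_q)_{q\in\Qcal}\colon \R^{\Qcal}\to\R^{\Qcal}$ defined by $F_q(z) \coloneqq \min_{p\in q\inc}\{ c^p_q + \kappa^p_q\sum_{q'\in\Qcal}\beta^p_{qq'} z_{q'}\}$; Lemma~\ref{lemma:dyn-wellposed} then ensures this recursion, together with the restriction of $z$ to $[-1,0]$, determines $z$ uniquely. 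For each fixed $q$ and each $p\in q\inc$, the map $z\mapsto c^p_q + \kappa^p_q\sum_{q'}\beta^p_{qq'}z_{q'}$ is affine with nonnegative linear part, since $\kappa^p_q\ge 0$ and $(\beta^p_{qq'})_{q'}$ is a probability vector; hence it is order-preserving and, being affine, concave. As $F_q$ is the pointwise minimum of this finite family, $F_q$ is order-preserving, concave and piecewise affine, and therefore so is $F = (F_q)_q$. This settles monotonicity, concavity and piecewise affineness.

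It remains to prove additive homogeneity with respect to $e$. For $z\in\R^{\Qcal}$ and $\alpha\in\R$,
\[
F_q(z+\alpha e) = \min_{p\in q\inc}\Big\{ c^p_q + \kappa^p_q\sum_{q'}\beta^p_{qq'}z_{q'} + \alpha\,\kappa^p_q\sum_{q'}\beta^p_{qq'}e_{q'}\Big\}.
\]
By~\eqref{e-def-stoichio}, for every $p\in q\inc$ one has $\kappa^p_q\sum_{q'}\beta^p_{qq'}e_{q'} = e_q$, so the coefficient of $\alpha$ equals the same scalar $e_q$ in every term of the minimum and may be pulled out, giving $F_q(z+\alpha e) = e_q\alpha + F_q(z)$, that is $F(z+\alpha e) = F(z) + \alpha e$. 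There is no genuine obstacle here: the statement is essentially a repackaging of the dynamics, and the one substantive ingredient is the use of the stoichiometric invariant in the precise form~\eqref{e-def-stoichio} (the weaker $T$-invariant relation would not suffice) to obtain additive homogeneity.
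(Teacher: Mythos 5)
Your proposal is correct and is exactly the verification the paper has in mind: the paper states Proposition~\ref{prop:unitary} as an ``immediate'' consequence of the form of~\eqref{eq:petri_to_smdp} and gives no further proof, and your argument (min of finitely many affine maps with nonnegative linear coefficients, plus the identity~\eqref{e-def-stoichio} to factor out $\alpha e_q$ uniformly over $p\in q\inc$) supplies precisely the omitted details. The one remark worth keeping is your observation that the full stoichiometric-invariant identity, and not merely the T-invariant relation, is what makes the coefficient of $\alpha$ independent of the choice of $p$.
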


Under the conditions of Proposition~\ref{prop:unitary}, it follows from Proposition~\ref{prop:crandall} that $F$ is nonexpansive
with respect to the weighted sup-norm $\|x\|_e:=\max_{q\in\Qcal}|x_q/e_q|$.

The next result deals with the extension to the case where holding times are integer.

\begin{corollary}\label{coro:reduc-to1}
  Suppose  Assumption~\ref{ass:A} is satisfied and that the holding times are integer.
  Let $T$ be the maximal holding time. Then, there exists
  a concave and order-preserving piecewise affine self-map $F$ of $\R^{\Qcal \times \{1,\dots,T\}}$
  , such that the vector
    $\tilde{z}(t)=(z(t),\dots,z(t-T+1))$
    satisfies  $\tilde{z}(t)=F(\tilde{z}(t-1))$. In addition, $F$ is additively homogeneous
  with respect to the vector $(e_q)_{q\in \Qcal, t\in \{1,\dots,T\}}$.
\end{corollary}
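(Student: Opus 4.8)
The plan is to turn the time-delay recursion into a genuine one-step dynamical system by recording, at each time, a sliding window consisting of the last $T$ values of the counter vector: the older copies evolve by a pure shift, while the freshest copy obeys the Petri net dynamics. Before doing so, I would eliminate the places with zero holding time. Running the substitution procedure from the proof of Lemma~\ref{lemma:dyn-wellposed}, the dynamics~\eqref{eq:petri_to_smdp} can be rewritten in the equivalent form $z_q(t) = G_q\big((z_{q'}(t-s))_{(q',s)\in \hat U_q}\big)$ for $q\in\Qcal$, where now every delay $s$ occurring in $\hat U_q$ is \emph{positive}: each substitution replaces an occurrence of delay $0$ by a group of terms whose delay is a single holding time of the net, so the surviving delays are positive holding times, hence integers in $\{1,\dots,T\}$, and the procedure terminates by the non-Zeno hypothesis. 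Each $G_q$ is obtained from the maps of \Cref{table:equation_counters} by composition together with nonnegative combinations and minima, so it is a finite minimum of affine functions of the delayed counters with nonnegative coefficients; in particular it is concave, order-preserving and piecewise affine. Moreover each elementary piece of~\eqref{eq:petri_to_smdp} is additively homogeneous with respect to $e$, which is exactly the content of~\eqref{e-def-stoichio} ($\kappa_q^p\sum_{q'}\beta_{qq'}^p e_{q'}=e_q$), and this property is preserved under composition, so $G_q\big((z_{q'}(t-s)+\alpha e_{q'})_{(q',s)\in\hat U_q}\big) = G_q\big((z_{q'}(t-s))_{(q',s)\in\hat U_q}\big) + \alpha e_q$ for all $\alpha\in\R$.

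Next I would introduce the augmented state $\tilde z(t) \coloneqq (z(t),z(t-1),\dots,z(t-T+1))\in\R^{\Qcal\times\{1,\dots,T\}}$, so that $\tilde z(t)_{q,k}=z_q(t-k+1)$, and define a self-map $F$ of $\R^{\Qcal\times\{1,\dots,T\}}$ by $F(y)_{q,k}\coloneqq y_{q,k-1}$ for $k\in\{2,\dots,T\}$ and $F(y)_{q,1}\coloneqq G_q\big((y_{q',s})_{(q',s)\in\hat U_q}\big)$; the latter is well defined precisely because every $s\in\hat U_q$ lies in $\{1,\dots,T\}$. A direct computation then gives $\tilde z(t)=F(\tilde z(t-1))$ for every $t$: the shift blocks merely reproduce the identity $z_q(t-k+1)=z_q((t-1)-(k-1)+1)$, while the $k=1$ block is the reduced dynamics, using $z_{q'}(t-s)=\tilde z(t-1)_{q',s}$.

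Finally I would read off the stated properties. Each component of $F$ is either linear (the shift blocks) or a finite minimum of affine functions with nonnegative coefficients (the $k=1$ block), hence $F$ is piecewise affine, concave, and order-preserving (shifts are monotone, and nonnegative combinations followed by minima preserve monotonicity). Additive homogeneity of $F$ with respect to $\tilde e \coloneqq (e_q)_{q\in\Qcal,\,k\in\{1,\dots,T\}}$ holds because the shift blocks obviously commute with adding $\alpha\tilde e$, while the $k=1$ block does so by the homogeneity of $G_q$ established above. None of these checks is hard; the step I expect to be the main obstacle is the preliminary reduction, namely verifying carefully that eliminating the zero-holding-time places via Lemma~\ref{lemma:dyn-wellposed} keeps all effective delays within $\{1,\dots,T\}$ and preserves concavity, monotonicity and additive homogeneity with respect to $e$. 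Once that is secured, the state augmentation is entirely routine.
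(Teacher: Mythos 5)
Your proposal is correct and follows essentially the same route as the paper: run the substitution procedure of Lemma~\ref{lemma:dyn-wellposed} to push all delays into $\{1,\dots,T\}$, then augment the state with the sliding window $(z(t),\dots,z(t-T+1))$, observing that concavity, order-preservation, piecewise affinity and additive homogeneity with respect to $e$ survive these substitutions and the shift blocks. The paper's proof is just a terser statement of the same argument, so no further comparison is needed.
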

\begin{proof}
  We apply the substitution procedure already
  used in the proof of~Lemma~\ref{lemma:dyn-wellposed}.
  This procedure allows us to replace
  the dynamics~\eqref{eq:petri_to_smdp}
  by a dynamics of the same form
  in which only the entries of $z(t-1),\dots,z(t-T)$
  occur at the right-hand side. Moreover,
  the class of concave, order-preserving
  piecewise-affine maps is preserved under
  substitutions of this nature. The additive
  homogeneity property is immediate.
\end{proof}

In contrast, when the holding times take irrational values,
the Petri net equations~\eqref{eq:petri_to_smdp} yield
a time delay system with a state space of {\em infinite dimension}.
To extend the previous approach, we need to represent
the evolution of this time-delay system by a semi-group.
We denote by $\Zcal=\mathscr{C}([-T,0])$ the space of continuous functions over $[-T,0]$.
The next proposition, which follows from~Lemma~\ref{lemma:dyn-wellposed}, ensures that it is well-posed to represent the evolution of counter functions by a one-parameter semigroup
$(\semigroup_t)_{t\geq 0}$ acting on $\ZQcal$, i.e., by a family of
self-maps of $\ZQcal$ satisfying
$\semigroup_{t_1+t_2}=\semigroup_{t_1}\circ \semigroup_{t_2}$.

\begin{proposition}
Suppose that a Petri net is priority-free and non-Zeno. The family of operators $\semigroup_t$, acting on $\ZQcal$,
   which associate with the function $z^0: s\mapsto (z^0_{q}(s))_{q\in \Qcal}$
      defined for $s\in [-T,0]$,
      the function $s \mapsto (z_q(s+t))_{q\in\Qcal}$ where
      $z$ is the solution of the dynamics determined by the initial condition
      $z^0$, constitutes a one-parameter semigroup.
\end{proposition}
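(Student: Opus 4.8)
The plan is to verify that the family $(\semigroup_t)_{t\geq 0}$ is well-defined on $\ZQcal$ and that it satisfies the semigroup composition law. Recall that $\ZQcal = \mathscr{C}([-T,0])^{\Qcal}$, the space of continuous $\R^{\Qcal}$-valued functions on the interval $[-T,0]$. First I would invoke Lemma~\ref{lemma:dyn-wellposed}: given any initial condition $z^0 \in \ZQcal$ (thought of as the restriction of a counter function to $[-T,0]$), there is a unique function $z:[-T,\infty)\to\R^{\Qcal}$ which coincides with $z^0$ on $[-T,0]$ and satisfies the dynamics of \Cref{table:equation_counters} on $[0,\infty)$. This makes the map $\semigroup_t$ well-defined as a function, sending $z^0$ to $s\mapsto z(s+t)$ for $s\in[-T,0]$.

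The first point to check is that $\semigroup_t$ indeed maps $\ZQcal$ into $\ZQcal$, i.e.\ that $z$ is continuous on all of $[-T,\infty)$, so that $s\mapsto z(s+t)$ is continuous on $[-T,0]$. This follows from the structure of the dynamics: each $z_q(t)$ is obtained by composing the continuous initial data with the operations appearing in \Cref{table:equation_counters} --- translations by $\tau_p$, finite minima, nonnegative linear combinations, and (in the preselection case) multiplication by $\pi_{qp}$ --- all of which preserve continuity. More precisely, on each interval $[(n-1)\tau^*, n\tau^*)$ the value $z_q(t)$ is expressed through the substitution procedure of Lemma~\ref{lemma:dyn-wellposed} as a continuous function of the values of $z$ on the previous interval, so continuity propagates by the same induction on $n$ used in that lemma; the only care needed is at the junction points $t = n\tau^*$, where continuity from the right is built in and continuity from the left follows from the continuity of the pieces. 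Hence $\semigroup_t(z^0)\in\ZQcal$ for every $t\geq 0$.

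The heart of the argument is the identity $\semigroup_{t_1+t_2} = \semigroup_{t_1}\circ\semigroup_{t_2}$. Fix $z^0\in\ZQcal$ and let $z$ be the associated global solution. Set $w^0 \coloneqq \semigroup_{t_2}(z^0)$, so $w^0(s) = z(s+t_2)$ for $s\in[-T,0]$, and let $w$ be the global solution of the dynamics with initial condition $w^0$. The claim reduces to showing $w(s) = z(s + t_2)$ for all $s\geq -T$, since then $\semigroup_{t_1}(w^0)(s) = w(s+t_1) = z(s+t_1+t_2) = \semigroup_{t_1+t_2}(z^0)(s)$. To prove $w(\cdot) = z(\cdot + t_2)$, note that $s\mapsto z(s+t_2)$ agrees with $w^0$ on $[-T,0]$ by construction, and it satisfies the dynamics of \Cref{table:equation_counters} on $[0,\infty)$ because that dynamics is \emph{autonomous} (time-invariant): the only appearance of time is through the delays $t\mapsto t-\tau_p$, so if $z(\cdot)$ solves the dynamics on $[0,\infty)$ then so does any time-shift $z(\cdot + c)$ on $[-c,\infty)\supseteq[0,\infty)$. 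By the uniqueness part of Lemma~\ref{lemma:dyn-wellposed} applied to the initial condition $w^0$, we conclude $w(s) = z(s+t_2)$ for all $s\geq -T$, which gives the semigroup law. The case $t_2 = 0$ (so $\semigroup_0 = \mathrm{id}$) is immediate from the same uniqueness.

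I expect the main obstacle to be purely one of bookkeeping rather than depth: one must be scrupulous about domains of definition, making sure that the time-shifted solution $z(\cdot+t_2)$ is still a legitimate solution on the interval $[0,\infty)$ where the dynamics is imposed (this is where non-Zeno-ness and the finiteness of $T$ matter, guaranteeing the recursion terminates and the shifted data on $[-T,0]$ indeed determines everything forward), and that continuity is not lost at the gluing points of the inductive construction. No genuinely new idea beyond Lemma~\ref{lemma:dyn-wellposed} and the autonomy of the equations is required.
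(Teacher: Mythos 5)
Your argument is correct and is exactly the one the paper intends: the paper offers no written proof beyond the remark that the proposition ``follows from Lemma~\ref{lemma:dyn-wellposed}'', and your combination of that uniqueness lemma with the time-invariance (autonomy) of the dynamics is precisely how the semigroup law $\semigroup_{t_1+t_2}=\semigroup_{t_1}\circ\semigroup_{t_2}$ is meant to be obtained. If anything, you supply more detail than the source.

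One caveat, which affects the paper as much as your write-up: the claim that $\semigroup_t$ maps $\ZQcal$ into $\ZQcal$ (i.e.\ that the solution stays \emph{continuous}) is not automatic at the junction $t=0$. For $t>0$ the value $z_q(t)$ is dictated by the dynamics in terms of delayed values, so $\lim_{t\to 0^+}z_q(t)$ equals an expression in $z^0$ evaluated at negative times, which need not coincide with the prescribed value $z^0_q(0)$ for an arbitrary continuous initial condition; your sentence ``continuity from the left follows from the continuity of the pieces'' silently assumes this compatibility. The fix is either to restrict to initial conditions satisfying the dynamics at $t=0$ (e.g.\ restrictions of genuine trajectories, which is how the semigroup is actually used later), or to note that the semigroup law itself is unaffected since it rests only on uniqueness and time-invariance, not on continuity of the orbit.
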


     When in addition the Petri net admits a stoichiometric invariant $e$, we equip the infinite dimensional space $\ZQcal$
   with the weighted sup-norm:
     \[
     \|\varphi\|_e=\max_{q\in \Qcal} \sup_{s\in [-T,0]}\left|\frac{\varphi_q(s)}{e_q}\right|
     \, .\]
     This is consistent with the definition of the weighted sup-norm
     introduced in~\Cref{sec:tools}, identifying $\ZQcal$ with
     $\mathscr{C}([-T,0]\times \Qcal)$, and denoting
    by the same symbol $e$ a vector in $\R^{|\Qcal|}$ and the function $(s,q)\mapsto e_q$ in $\mathscr{C}([-T,0]\times \Qcal)$.
    The next result shows that our evolution semigroup, which is time-invariant, satifies additional good properties introduced in~\Cref{sec:tools}.

    \begin{proposition}\label{prop:nexp}
      Under Assumption~\ref{ass:A}, for all $t\geq 0$,
      the operator $\semigroup_t:\ZQcal\to \ZQcal$
      is order-preserving, additively homogeneous
      with respect to the function $e$, and nonexpansive with respect
      to the weighted sup-norm $\|\cdot\|_e$ on $\ZQcal\simeq \mathscr{C}([-T,0]\times \Qcal)$.
    \end{proposition}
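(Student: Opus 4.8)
The plan is to verify the three asserted properties of $\semigroup_t$ separately, each time reducing to the structure of the counter dynamics~\eqref{eq:petri_to_smdp}, and then to obtain nonexpansiveness for free from Crandall and Tartar's observation. Throughout I would work with the identification $\ZQcal \simeq \scrC(K)$, where $K \coloneqq [-T,0]\times\Qcal$ is compact, and read $e$ both as a positive vector in $\R^{\Qcal}$ and as the function $(s,q)\mapsto e_q$ in $\scrC(K)$.

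For \emph{order-preservation}: given $\varphi \le \psi$ in $\ZQcal$, I would compare the corresponding solutions $z$ and $w$ of~\eqref{eq:petri_to_smdp}. Reusing the substitution procedure from the proof of Lemma~\ref{lemma:dyn-wellposed}, one may assume that every delay $s$ appearing in the right-hand side is bounded below by $\tau^* \coloneqq \inf\{\tau_p : \tau_p > 0\} > 0$. Then I would prove by induction on $n\ge 0$ that $z(t)\le w(t)$ for all $t\in[-T, n\tau^*)$: the base case is the hypothesis $\varphi \le \psi$ on $[-T,0]$, and in the inductive step each $z_q(t)$ is a minimum over $p\in q\inc$ of affine maps with nonnegative coefficients in the quantities $z_{q'}(t-s)$ with $s\ge \tau^*$, hence evaluated at times $t-s < n\tau^*$ already controlled by the induction hypothesis; monotonicity of $\min$ and of nonnegative combinations gives $z_q(t)\le w_q(t)$. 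Since the comparison then holds on all of $[-T,\infty)$, shifting the argument by $t$ yields $\semigroup_t\varphi \le \semigroup_t\psi$. (The same induction with ``continuous'' replacing ``$\le$'' confirms that $\semigroup_t$ maps $\ZQcal$ into itself, as already granted by the preceding proposition.)

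For \emph{additive homogeneity}: fix $\alpha\in\R$, let $z$ be the solution with initial condition $\varphi$, and set $\hat z_q(t) \coloneqq z_q(t) + \alpha e_q$. On $[-T,0]$ this agrees with $\varphi + \alpha e$. Substituting $\hat z$ into the right-hand side of~\eqref{eq:petri_to_smdp} for $t\ge 0$, the extra contribution in state $q$ along action $p$ is $\alpha\,\kappa_q^p\sum_{q'}\beta^p_{qq'}e_{q'}$, which by the defining identity~\eqref{e-def-stoichio} of a stoichiometric invariant equals $\alpha e_q$ \emph{regardless of $p\in q\inc$}; hence it factors out of the minimum over $p$ and $\hat z$ solves~\eqref{eq:petri_to_smdp} with initial condition $\varphi + \alpha e$. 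By the uniqueness statement of Lemma~\ref{lemma:dyn-wellposed}, $\hat z$ is that solution, so shifting by $t$ gives $\semigroup_t(\varphi + \alpha e) = \semigroup_t(\varphi) + \alpha e$. Finally, for \emph{nonexpansiveness}, since $\semigroup_t:\scrC(K)\to\scrC(K)$ is order-preserving and additively homogeneous with respect to the positive function $e$ (positivity being part of Assumption~\ref{ass:A}), Proposition~\ref{prop:crandall} applies verbatim and gives that $\semigroup_t$ is nonexpansive for $\|\cdot\|_e$.

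I would expect the only delicate point to be the bookkeeping in the order-preservation step: making the reduction to strictly positive delays precise and checking that the induction over the intervals $[-T, n\tau^*)$ is well-founded. But this is exactly the mechanism already deployed in the proof of Lemma~\ref{lemma:dyn-wellposed}, so it requires no new idea; the additive-homogeneity step is then a one-line use of~\eqref{e-def-stoichio} together with uniqueness, and the nonexpansiveness step is a direct citation of Proposition~\ref{prop:crandall}.
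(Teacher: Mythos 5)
Your proposal is correct and follows essentially the same route as the paper: order-preservation and additive homogeneity are derived from the uniqueness of trajectories (Lemma~\ref{lemma:dyn-wellposed}) together with the monotonicity and the stoichiometric-invariant identity~\eqref{e-def-stoichio} satisfied by the dynamics, and nonexpansiveness is then obtained by citing Proposition~\ref{prop:crandall}. The paper merely states these steps tersely, whereas you fill in the induction over $[-T,n\tau^*)$ and the substitution $\hat z = z+\alpha e$ explicitly; no new idea is involved.
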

    \begin{proof}
      The order-preserving and additive homogeneity of $\semigroup_t$
      follow from the fact
      that a trajectory $z$ is uniquely determined by its values
      on $[-T,0]$ (see Lemma~\ref{lemma:dyn-wellposed}), and from the order-preserving and homogeneity properties
      of the equation defining the dynamics. The nonexpansive property
      follows from~Proposition~\ref{prop:crandall}.
      \end{proof}

    \subsection{Existence and universality of the throughput}
    \label{ssec:throughput}

We are interested in the long-run time behavior of Petri nets. For this purpose, we introduce a notion of affine stationary regime.
\begin{definition}
  We say that a trajectory $z$ (counter functions of the transitions) of the Petri net is an {\em affine stationary regime} if there exists two vectors
  $\rho\in(\Rplus)^{\Qcal}$ and $u\in\R^{\Qcal}$ such that for all $t\geq -T$, $z(t) = \rho t + u$.
\end{definition}

The next proposition shows that, up to a shift in time, affine stationary regimes are characterized by a lexicographic system.
\begin{proposition}  \label{prop-lex}
  Suppose the Petri net has no priority rule. Given $\rho\in(\Rplus)^{\Qcal}$ and $u\in\R^{\Qcal}$,
  there exists a nonnegative number $t_0$ such that $z(t) \coloneqq \rho(t+t_0)+u$ is a stationary regime if and only if
\begin{align}
  \tag{L1}
  \rho_q &= \min_{p\in q\inc }\Big\{ \kappa_q^p\sum_{q'\in \Qcal} \beta_{qq'}^p\, \rho_{q'}\Big\}
  \label{eq:L1}
  \\
  \tag{L2}
  u_q &= \min_{p\in q\inc_{*}}\Big\{ c^p_q - \rho_q\tau_p + \kappa_q^p\sum_{q'\in\Qcal} \beta_{qq'}^p u_{q'} \Big\}
  \label{eq:L2}
\end{align}
where $q\inc_{*}$ is the subset of $q\inc$ where the minimum is achieved in~\eqref{eq:L1}.
\end{proposition}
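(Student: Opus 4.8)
The plan is to substitute the affine ansatz $z_q(t)=\rho_q(t+t_0)+u_q$ into the dynamics~\eqref{eq:petri_to_smdp} and match coefficients. For $q\in\Qcal$ and $p\in q\inc$, set
\[
a_q^p \coloneqq \kappa_q^p\sum_{q'\in\Qcal}\beta_{qq'}^p\rho_{q'}, \qquad
b_q^p \coloneqq c_q^p - a_q^p\tau_p + \kappa_q^p\sum_{q'\in\Qcal}\beta_{qq'}^p u_{q'}\, .
\]
Plugging the ansatz into the right-hand side of~\eqref{eq:petri_to_smdp} and writing $s=t+t_0$, a direct computation shows that $z$ satisfies~\eqref{eq:petri_to_smdp} on its range of validity if and only if the identity $\rho_q s+u_q=\min_{p\in q\inc}\{a_q^p s+b_q^p\}$ holds for all $s$ in a half-line $[s_0,+\infty)$, where $s_0$ depends on $t_0$ and $T$. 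A key remark is that $a_q^p=\rho_q$ precisely when $p$ attains the minimum in~\eqref{eq:L1}, i.e.~when $p\in q\inc_{*}$, and that for such $p$ the scalar $b_q^p$ is exactly the bracketed term of~\eqref{eq:L2}.

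The heart of the proof is then an elementary lemma on minima of finitely many affine functions of one variable: given reals $(a_p,b_p)_{p\in P}$ (with $P$ finite nonempty) and $(\alpha,\beta)$, the identity $\alpha s+\beta=\min_{p\in P}\{a_p s+b_p\}$ holds for all $s$ in some half-line $[s_0,+\infty)$ if and only if $\alpha=\min_{p\in P}a_p$ and $\beta=\min\{b_p : a_p=\alpha\}$; moreover, when these equalities hold, the identity is valid for every $s\ge s^\sharp$ with $s^\sharp\coloneqq\max\{(\beta-b_p)/(a_p-\alpha) : p\in P,\ a_p>\alpha\}$ (using $\max\varnothing=-\infty$). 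I would prove the forward implication by noting that the minimum of affine maps is, for $s$ large, realized by the lexicographically smallest pair $(a_{p^*},b_{p^*})$, so the given half-line identity forces $(\alpha,\beta)=(a_{p^*},b_{p^*})$; the backward implication is the straightforward estimate that makes $s^\sharp$ explicit.

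Applying this lemma coordinatewise, with $P=q\inc$ and $(\alpha,\beta)=(\rho_q,u_q)$, settles both directions. For ``only if'', if some $t_0\ge 0$ makes $z(t)=\rho(t+t_0)+u$ a stationary regime, then the identity above holds on a half-line, so the lemma gives $\rho_q=\min_{p\in q\inc}a_q^p$ --- which is~\eqref{eq:L1} --- and $u_q=\min\{b_q^p : a_q^p=\rho_q\}=\min_{p\in q\inc_{*}}b_q^p$ --- which is~\eqref{eq:L2} (here $q\inc_{*}\ne\varnothing$ as the argmin of a finite nonempty set). For ``if'', assuming~\eqref{eq:L1}--\eqref{eq:L2}, the lemma yields thresholds $s^\sharp_q$; choosing $t_0\coloneqq T+\max\{0,\max_{q}s^\sharp_q\}$ guarantees $t+t_0\ge s^\sharp_q$ for all $t\ge -T$ and all $q$, hence $z(t)=\rho(t+t_0)+u$ satisfies~\eqref{eq:petri_to_smdp} throughout $[-T,+\infty)$ and is an affine stationary regime (enlarging $t_0$ further, if need be, to also ensure $z(-T)\ge 0$).

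The main obstacle is not conceptual but a matter of careful bookkeeping: \eqref{eq:petri_to_smdp} is a delay equation, so it only constrains $z_q(t)$ once all delayed arguments $t-\tau_p$ lie in $[-T,+\infty)$, and the purpose of the time-shift $t_0$ is precisely to turn the \emph{eventually}-affine behaviour of the concave piecewise-affine map $s\mapsto\min_{p}\{a_q^p s+b_q^p\}$ into an identity valid on the whole half-line $[-T,+\infty)$. Beyond this, all steps are routine.
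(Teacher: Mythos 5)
Your proposal is correct and follows essentially the same route as the paper: substitute the affine ansatz into~\eqref{eq:petri_to_smdp}, identify slope and intercept for the ``only if'' direction, and for the converse choose $t_0$ large enough that every non-minimizing term (in~\eqref{eq:L1} or~\eqref{eq:L2}) is dominated on the relevant half-line. The only difference is presentational: you package the paper's three-case analysis of~\eqref{e-expand} into an explicit elementary lemma on minima of affine functions with the threshold $s^\sharp$, which makes the required size of $t_0$ explicit but adds no new idea.
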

\begin{proof}
  Equations \eqref{eq:L1}-\eqref{eq:L2} are obtained by substituting $z(t)=\rho (t+t_0) + u$ in~\eqref{eq:petri_to_smdp}, letting $t$ tend to infinity and identifying slope and intercept for both sides, since $z$ is ultimately affine.

\medskip
  Conversely, suppose that~\eqref{eq:L1}-\eqref{eq:L2} hold, and consider
  $z(t)\coloneqq \rho(t+t_0)+u$. We need to show that for $t\geq 0$, $z$ satisfies the equation~\eqref{eq:petri_to_smdp}, that we may also rewrite under the form:
  \begin{align}
0= \min_{p\in q\inc} \Big\{ \Big(     c_q^p  + \kappa_q^p \sum_{q'\in \mathcal{Q}} \beta^p_{qq'} (u_{q'}
- \rho_{q'} \tau_p) -u_q \Big) + (t+t_0) \Big( \kappa_q^p \sum_{q'\in \mathcal{Q}} \beta^p_{qq'} \rho_{q'}- \rho_q \Big) \Big\}
\label{e-expand}
  \end{align}
  If $p\in q\inc_{*}$ achieves the minimum in~\eqref{eq:L2},
  so that it also achieves the minimum in~\eqref{eq:L1},
  then, the two
  terms in~\eqref{e-expand} vanish.
  Suppose now that $p$ achieves the minimum in~\eqref{eq:L1} but that
  it does not achieve the minimum in~\eqref{eq:L2}. Then, the coefficient
  of $(t+t_0)$ in~\eqref{e-expand} still vanishes,
  and, by~\eqref{eq:L2},
  \[
   c_q^p  +
       \kappa_q^p \sum_{q'\in \mathcal{Q}} \beta^p_{qq'} (u_{q'}
       - \rho_{q'} \tau_p) -u_q \geq 0  \enspace.
       \]
       Suppose finally that $p\in q\inc$ does not
       achieve the minimum in~\eqref{eq:L1}, which entails that the gap
       $\varepsilon\coloneqq\kappa_q^p\sum_{q'\in \Qcal} \beta_{qq'}^p\, \rho_{q'} - \rho_q$ takes a strictly positive value.
       Then, since $t\geq 0$,
       the expression in~\eqref{e-expand} can be
       bounded below by $C +
       t_0 \varepsilon  $ for some real constant $C$, and so, for $t_0$ large enough, this expression takes a nonnegative value, which entails that~\eqref{e-expand} holds.
\end{proof}

Observe how the equations~\eqref{eq:L1}-\eqref{eq:L2} derived for Petri nets asymptotic regimes are syntaxically the same than equations~\eqref{eq:OE1}-\eqref{eq:OE2} of Theorem~\ref{thm:ac}.
When a stoichiometric invariant $e$ exists, this can be seen as an immediate consequence of Correspondence Theorem~\ref{thm:correspondence2}. Indeed, the throughput $\rho_q$ of transition $q\in\Qcal$ is given by $\lim_{t\to\infty}z_q(t)/t$. Since, $z_q(t)/e_q$ corresponds to the value function of an undiscounted SMDP, the term $\rho_q/e_q$ is naturally interpreted as the optimal average cost of this SMDP starting from the state associated with $q$ (actually up to an inversion of limits in~\eqref{eq:avgcostdef} that Theorem~\ref{thm:stationary} thereafter proves licit).

Exploiting Correspondence Theorem~\ref{thm:correspondence2} further,
we arrive at our first main result, that provides existence of stationary regimes and uniqueness of the throughput.
\begin{theorem}\label{thm:stationary}
  Under Assumption~\ref{ass:A},
  \begin{enumerate}[label=(\roman*)]
  \item there exists an affine stationary regime, \ie~$(\rho,u)\in (\Rplus)^{\Qcal}\times \R^{\Qcal}$ such that, initializing the dynamics with $z(t)=\rho t+u$ for $t\in [-T,0]$, we end up with $z(t)=\rho t+u$ for all $t\geq 0$.
  \item the vector $\rho$ in (i) is universal, \ie, for any initial condition, the solution $z(t)$ of the dynamics
    satisfies
      \[ z(t)
      \underset{t\to\infty}{=} \rho t + O(1)\, .\]
  \end{enumerate}
\end{theorem}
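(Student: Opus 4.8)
The plan is to deduce (i) from the correspondence with undiscounted semi-Markov decision processes, and then to obtain (ii) by comparing an arbitrary trajectory with a stationary one via the nonexpansiveness of the evolution semigroup.

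For (i), I would first apply Correspondence Theorem~\ref{thm:correspondence2}: the renormalized counters $\tilde z_q=z_q/e_q$ satisfy the finite-horizon Bellman equation~\eqref{eq:SMDP_finite_horizon} of the undiscounted SMDP whose states are the transitions $q\in\Qcal$, whose admissible actions at state $q$ are the upstream places $p\in q\inc$, with holding times $t^p_q=\tau_p$, costs $r^p_q=c^p_q/e_q$, unit discount factors, and transition probabilities $P^p_{qq'}=e_q^{-1}\kappa^p_q\beta^p_{qq'}e_{q'}$ --- the defining identity~\eqref{e-def-stoichio} of the stoichiometric invariant being exactly what makes each row of $P^p$ sum to one. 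I would then check the hypothesis of Theorem~\ref{thm:ac}, namely that no final class of any deterministic policy is traversed in zero time: if $\sum_{q\in F}t^{\sigma(q)}_q=0$ for some policy $\sigma$ and some class $F\in\Fcal(\sigma)$, then $\tau_{\sigma(q)}=0$ for every $q\in F$, and running around a cycle inside $F$ exhibits a circuit of the Petri net passing only through places with zero holding time, contradicting the non-Zeno assumption. Theorem~\ref{thm:ac}(ii) then yields vectors $g,h\in\R^{\Qcal}$ solving~\eqref{eq:OE1}--\eqref{eq:OE2}; moreover $g=g^*\geq 0$, since in the formula of Theorem~\ref{thm:ac}(i) the costs $c^p_q=\pi_{qp}\alpha_{qp}^{-1}m_p$ are nonnegative while every final class has positive total holding time. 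Undoing the renormalization, $\rho:=\mathrm{diag}(e)\,g\in(\Rplus)^{\Qcal}$ and $u:=\mathrm{diag}(e)\,h\in\R^{\Qcal}$ satisfy precisely~\eqref{eq:L1}--\eqref{eq:L2}, so Proposition~\ref{prop-lex} provides $t_0\geq 0$ such that $z(t):=\rho(t+t_0)+u$ is a stationary regime; replacing $u$ by $u+t_0\rho$ yields an affine stationary regime $z(t)=\rho t+u$ for all $t\geq -T$, which is assertion (i).

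For (ii), let $z^0\in\ZQcal$ be an arbitrary initial condition, $z$ the corresponding trajectory, and $\varphi^*\in\ZQcal$ the restriction to $[-T,0]$ of the affine stationary regime $s\mapsto\rho s+u$ found in (i), so that $(\semigroup_t\varphi^*)_q(s)=\rho_q(t+s)+u_q$. By Proposition~\ref{prop:nexp} every $\semigroup_t$ is nonexpansive for the weighted sup-norm $\|\cdot\|_e$ on $\ZQcal$, whence for all $t\geq 0$
\[
\|\semigroup_t z^0-\semigroup_t\varphi^*\|_e\leq\|z^0-\varphi^*\|_e\,;
\]
the right-hand side is a finite constant $C$, since $z^0$ and $\varphi^*$ are continuous on the compact set $[-T,0]\times\Qcal$ and $e>0$. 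The left-hand side equals $\max_{q\in\Qcal}\sup_{s\in[-T,0]}|z_q(t+s)-\rho_q(t+s)-u_q|/e_q$, so specializing to $s=0$ gives $|z_q(t)-\rho_q t-u_q|\leq C\,e_q$ for every $q\in\Qcal$ and every $t\geq 0$, that is $z(t)=\rho t+O(1)$; uniqueness of such a $\rho$ (hence of the throughput) is immediate, since $(\rho-\rho')t=O(1)$ forces $\rho=\rho'$.

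The genuinely delicate point is the verification that the Petri-net non-Zeno condition coincides with the zero-time-subchain hypothesis of Theorem~\ref{thm:ac}, together with the bookkeeping of the $e$-renormalization and of the reversal of time in the correspondence; the other steps are direct applications of results already established. Going through the SMDP is precisely what lets (i) cover irrational holding times: there the semigroup lives on the infinite-dimensional space $\ZQcal$, on which no Kohlberg-type statement (Theorem~\ref{thm:kohlberg}) is available, whereas Theorem~\ref{thm:ac} is purely algebraic and treats all holding times uniformly --- for integer holding times one could instead invoke Kohlberg's theorem for the finite-dimensional map of Corollary~\ref{coro:reduc-to1}.
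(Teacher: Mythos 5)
Your proposal is correct and follows essentially the same route as the paper: part (i) via the reduction of affine stationary regimes to the lexicographic system \eqref{eq:L1}--\eqref{eq:L2} (Proposition~\ref{prop-lex}), identified through Correspondence Theorem~\ref{thm:correspondence2} with the average-cost optimality equations \eqref{eq:OE1}--\eqref{eq:OE2} whose solvability is guaranteed under the non-Zeno hypothesis, and part (ii) via the nonexpansiveness of the evolution semigroup in the weighted sup-norm exactly as in the paper. The only (immaterial) difference is that the paper first treats unit and integer holding times by Kohlberg's invariant half-line theorem before falling back on the SMDP route for irrational delays, whereas you use the SMDP route uniformly --- a point you yourself note --- and you supply slightly more detail than the paper on why non-Zeno implies the positive-holding-time condition on final classes and on why $\rho\geq 0$.
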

\begin{proof}
  We begin by proving part (i). When the holding times of all places are unitary, the dynamics write $z(t)=F(z(t-1))$ where $F$ is piecewise affine and
  non-expansive in weighted sup-norm associated with $e$ from Propositions~\ref{prop:crandall}~and~\ref{prop:unitary}. Then, an affine stationary regime can be identified
  to an invariant half-line of $F$, whose existence follows from Kohlberg's Theorem~\ref{thm:kohlberg}. When the holding times are integer, and more generally, rational, we easily reduce to the unit delay case, exploiting Corollary~\ref{coro:reduc-to1}.
  However, when the holding times take irrational values, we cannot reduce
  to such a finite dimensional setting.
  From Proposition~\ref{prop-lex}, the existence of an affine stationary regime amounts to the existence of a solution to the lexicographic system~\eqref{eq:L1}-\eqref{eq:L2}, which from Correspondence Theorem~\ref{thm:correspondence2} is equivalent to the system~\eqref{eq:OE1}-\eqref{eq:OE2} of Theorem~\ref{thm:ac} on the average-cost of an undiscounted SMDP, as we explained above, and whose hypothesis is satisfied from our non-Zeno assumption.
  Denardo and Fox provided in~\cite{Denardo-Fox68} a constructive
  proof of the existence of the solution to this problem: a solution is obtained by applying a version of Howard's policy iteration algorithm adapted to multichain semi-Markov problems. The termination and correctness proofs in~\cite{Denardo-Fox68} entail the existence result.

  We now prove Assertion (ii): let $(\rho,u)$ be the stationary regime of assertion (i) and $\bar{z} \in \ZQcal$ the function such that for all $t\in[-T,0]$, $\bar{z}(t)=\rho t +u$.
  Consider another initial condition $\bar{z}' \in \ZQcal$,
  and let $z'$ denote the trajectory defined by this initial condition,
  so that $z'(t)=[\semigroup_t \bar{z}'](0)$.
  We know by Proposition~\ref{prop:nexp} that
  $\|\semigroup_t \bar z' - \semigroup_t \bar z\|_e
  \leq \|\bar{z}'-\bar z\|_e$. In particular,
  $|z'_q(t)-\rho_q t -u_q |/e_q \leq \|\bar z'-\bar z\|_e$, which proves the theorem.
\end{proof}

Now that we proved the existence of the throughput, we exploit the correspondences further to state three corollaries. First, Correspondence Theorem~\ref{thm:correspondence2} prompts us to introduce policies on Petri nets: a map $\sigma \colon \Qcal \to \Pcal$ is a \emph{policy} if for all $q\in\Qcal$, $\sigma(q)\in q\inc$. Given a policy $\sigma$ and a stoichiometric invariant $e$, the $|\Qcal|\times|\Qcal|$ matrix $P^{\sigma}$ with entries $(e_q^{-1}\kappa_q^{\sigma(q)}\beta^{\sigma(q)}_{qq'}e_{q'})_{q,q'\in\Qcal}$
is a probability matrix whose final classes are denoted by $\Fcal(\sigma)$. We denote by $\mu_F^{\sigma}$ the unique invariant measure supported by the class $F\in\Fcal$, and by $\phi^{\sigma}_{F,q}$ the probability of reaching $F$ by applying policy $\sigma$ starting from state $q$.
The vectors $m^{\sigma}$ (resp.\ $\tau^{\sigma}$) stand for $(m_{\sigma(q)})_{q\in \Qcal}$ and $(\tau_{\sigma(q)})_{q\in \Qcal}$ and we finally define the diagonal matrix $D^{\sigma} \coloneqq \mathrm{diag}((e_q^{-1}\alpha_{q\sigma(q)}^{-1}\pi_{q\sigma(q)})_{q\in\Qcal})$.  We then have the following result:

    \begin{corollary}[Throughput complex]
      \label{coro:complex}
      Under Assumption~\ref{ass:A}, the throughput vector $\rho$ is given by
   \begin{align}
     \forall q\in\Qcal, \quad \rho_q = e_q\,\min_{\sigma} \sum_{F\in\mathcal{F}(\sigma)} \phi_{F,q}^{\sigma} \frac{\langle \mu_F^{\sigma}, D^{\sigma}m^{\sigma}\rangle }{\langle \mu_F^{\sigma}, \tau^{\sigma} \rangle } \, ,
     \label{e-rhomin}
   \end{align}
   where the minimum is taken over all the policies.
    \end{corollary}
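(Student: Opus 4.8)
The plan is to read the formula directly off the average-cost optimality result, Theorem~\ref{thm:ac}, applied to the undiscounted SMDP attached to the Petri net by Correspondence Theorem~\ref{thm:correspondence2}. First I would make the dictionary explicit. By Correspondence Theorem~\ref{thm:correspondence2}, under Assumption~\ref{ass:A} the rescaled counters $\tilde z_q = z_q/e_q$ satisfy the finite-horizon Bellman equation~\eqref{eq:SMDP_finite_horizon} of an undiscounted SMDP whose state set is $\Qcal$, whose set of actions at state $q$ is $q\inc$, and whose data are, for $p\in q\inc$,
\[
r^p_q = \frac{c^p_q}{e_q} = e_q^{-1}\pi_{qp}\alpha_{qp}^{-1}m_p, \qquad t^p_q = \tau_p, \qquad P^p_{qq'} = e_q^{-1}\kappa^p_q\beta^p_{qq'}e_{q'}, \qquad \gamma^p_q = 1 ,
\]
the last equality being stochastic precisely because of the stoichiometric invariance~\eqref{e-def-stoichio}. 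A Petri-net policy $\sigma\colon\Qcal\to\Pcal$ with $\sigma(q)\in q\inc$ is exactly a policy of this SMDP in the sense of Section~\ref{sec:defSMDP}; its associated matrix is the $P^\sigma$ introduced before the statement, its cost vector is $r^\sigma = (e_q^{-1}\pi_{q\sigma(q)}\alpha_{q\sigma(q)}^{-1}m_{\sigma(q)})_{q\in\Qcal} = D^\sigma m^\sigma$, and its holding-time vector is $t^\sigma = (\tau_{\sigma(q)})_{q\in\Qcal} = \tau^\sigma$. Hence the objects $\Fcal(\sigma)$, $\mu^\sigma_F$ and $\phi^\sigma_{F,q}$ of the corollary are literally those attached to this SMDP.

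Next I would verify the hypothesis of Theorem~\ref{thm:ac}, namely that no final subchain of any $P^\sigma$ can be traversed in zero time. A cycle $q_0\to q_1\to\dots\to q_k=q_0$ in the digraph of $P^\sigma$ means $q_{j+1}\in\sigma(q_j)\inc$, so there is a length-two path $q_{j+1}\to\sigma(q_j)\to q_j$ in the Petri net with $t^{\sigma(q_j)}_{q_j}=\tau_{\sigma(q_j)}$; were all of these holding times zero, we would obtain a circuit of the net visiting only zero-holding-time places, contradicting the non-Zeno part of Assumption~\ref{ass:A}. Therefore Theorem~\ref{thm:ac}(i) applies to the SMDP above and gives, for its optimal average cost $g^*$,
\[
g^*(q) = \min_{\sigma} \sum_{F\in\Fcal(\sigma)} \phi^\sigma_{F,q}\,\frac{\langle\mu^\sigma_F,\, D^\sigma m^\sigma\rangle}{\langle\mu^\sigma_F,\, \tau^\sigma\rangle} ,
\]
the minimum being over all policies.

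It remains to identify $g^*(q)$ with $\rho_q/e_q$. By Theorem~\ref{thm:stationary}(i) there is an affine stationary regime $z(t)=\rho t+u$; dividing by $e$ and invoking Proposition~\ref{prop-lex}, the pair $(\rho/e,\,u/e)$ solves~\eqref{eq:L1}--\eqref{eq:L2}, which after division of~\eqref{eq:L1} and~\eqref{eq:L2} by $e_q$ is exactly the pair of optimality equations~\eqref{eq:OE1}--\eqref{eq:OE2} for the SMDP above — note in particular that the set $q\inc_{*}$ of places achieving the minimum in~\eqref{eq:L1} coincides with the set $A^*_q$ of actions achieving the minimum in~\eqref{eq:OE1}, the two being the same equation up to the factor $e_q$. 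By the uniqueness assertion of Theorem~\ref{thm:ac}(ii), $g^*(q)=\rho_q/e_q$, and multiplying through by $e_q$ yields the claimed formula. Combined with Theorem~\ref{thm:stationary}(ii), this also confirms $\rho_q=\lim_{t\to\infty}z_q(t)/t$, so the left-hand side is the throughput as intended.

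The only delicate points are bookkeeping: faithfully tracking the normalisation by the stoichiometric invariant through $c^p_q$, $\kappa^p_q$, $\beta^p_{qq'}$ so that the data fit the compact objects $D^\sigma$, $m^\sigma$, $\tau^\sigma$; and checking that the structural hypothesis of Theorem~\ref{thm:ac} is precisely the non-Zeno condition. The substantive inputs — existence of a stationary regime and the average-cost formula — are supplied by Theorem~\ref{thm:stationary} and Theorem~\ref{thm:ac}, which are already available.
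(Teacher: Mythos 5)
Your proof is correct and follows exactly the route the paper intends: the paper's own proof of Corollary~\ref{coro:complex} is the one-line remark that it ``is a consequence of Theorem~\ref{thm:ac} on undiscounted SMDPs,'' and your argument is precisely the expansion of that remark — the dictionary from Correspondence Theorem~\ref{thm:correspondence2}, the check that the non-Zeno condition yields the positive-holding-time hypothesis on final classes, and the identification $g^*=\rho/e$ via Proposition~\ref{prop-lex}, Theorem~\ref{thm:stationary}(i) and the uniqueness statement of Theorem~\ref{thm:ac}(ii). All the bookkeeping (the stochasticity of $P^p_{qq'}$ from the stoichiometric invariant, $r^\sigma=D^\sigma m^\sigma$, $t^\sigma=\tau^\sigma$, and the agreement of $q\inc_*$ with $A^*_q$) checks out.
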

    \begin{proof}
      This is a consequence of Theorem~\ref{thm:ac} on undiscounted SMDPs.
    \end{proof}
    This formula shows that the throughput $\rho_q$ of the transition $q$ is a concave piecewise affine function of the initial marking vector $m\in (\Rplus)^{\Pcal}$. As is customary in tropical geometry, we associate with this map a polyhedral complex
(recall that a collection $\mathcal{L}$ of polyhedra is a {\em polyhedral complex} if for all $L\in\mathcal{L}$, any face $F$ of $L$ is also in $\mathcal{L}$ and for $L_1,L_2\in\mathcal{L}$, the polyhedron $L_1\cap L_2$ is a face of both $L_1$ and $L_2$, see~\cite{de2010triangulations}).
    If $\Sigma$ is a set of policies, we define the polyhedral {\em cell} $\mathcal{C}_\Sigma$
    to be the set of initial markings $m$ such that the argument of the minimum in~\eqref{e-rhomin} is $\Sigma$ (note that the cell
    $\mathcal{C}_\Sigma$ may be empty for some choices of $\Sigma$).
    The space $(\Rplus)^\Pcal$ is covered by the cells $\mathcal{C}_\Sigma$ of maximal
    dimension, the latter can be interpreted as {\em congestion phases}, or equivalently to a choice of bottleneck places for each $q\in\Qcal$ such that $|q\inc|>1$.

We now consider the computational complexity problem of computing the throughput vector $\rho$.
\begin{corollary}[LP characterization of the throughput]
  \label{coro:LP}
  Under Assumption~\ref{ass:A}, the throughput vector $\rho$ can be computed in polynomial time by solving
 the following linear program:

\begin{equation*}
    \max \sum_{q\in\Qcal} \rho_{q} \quad \text{s.t.}\quad
    \left\{\begin{aligned}
      \rho_q & \leq \kappa_q^p \sum_{q'\in \Qcal} \beta^p_{qq'}\rho_{q'}, &\quad \forall q\in \Qcal, \forall p\in q\inc \\
      u_q    &\leq c_q^p - \rho_q \tau_p + \kappa_q^p \sum_{q'\in \Qcal} \beta^p_{qq'}u_{q'}, &\quad \forall q\in \Qcal, \forall p\in q\inc
   \end{aligned}\right.
   \label{e-lp}
   \end{equation*}
in which $\rho, \, u\in\R^\Qcal$ are the variables. More precisely, if $(\rho, u)$ is any optimal solution of this program, then $\rho$ coincides with the throughput vector.
\end{corollary}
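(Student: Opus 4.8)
The plan is to read the linear program as the standard primal (``superharmonic vector'') formulation of the ergodic control problem attached to the SMDP of Correspondence Theorem~\ref{thm:correspondence2}, and to bootstrap from Theorems~\ref{thm:ac} and~\ref{thm:stationary}. Throughout I would work with the normalized counters $\tilde z_q = z_q/e_q$; dividing the two families of LP constraints by $e_q$ turns them into $\tilde\rho_q \le \sum_{q'}P^p_{qq'}\tilde\rho_{q'}$ and $\tilde u_q \le r^p_q - \tilde\rho_q\tau_p + \sum_{q'}P^p_{qq'}\tilde u_{q'}$ for all $q\in\Qcal$ and all $p\in q\inc$, where $P^p_{qq'} = e_q^{-1}\kappa_q^p\beta^p_{qq'}e_{q'}$ and $r^p_q = c^p_q/e_q$ (so that $r^\sigma = D^\sigma m^\sigma$ in the notation of Corollary~\ref{coro:complex}) are the transition probabilities and costs of an undiscounted SMDP, as in the proof of Correspondence Theorem~\ref{thm:correspondence2}. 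By Proposition~\ref{prop-lex}, Correspondence Theorem~\ref{thm:correspondence2} and the uniqueness in Theorem~\ref{thm:ac}(ii), the throughput is $\rho_q = e_q\,g^*(q)$, $g^*$ being the minimal average cost of this SMDP, whose hypothesis in Theorem~\ref{thm:ac} is met thanks to the non-Zeno assumption.

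First I would prove the \emph{upper bound}: every LP-feasible $(\rho,u)$ satisfies $\rho_q \le e_q\,g^*(q)$ for all $q$. Fix a deterministic policy $\sigma$ and a final class $F\in\Fcal(\sigma)$. Since $F$ is closed under $P^\sigma$, restricting the first family of inequalities to $F$ gives $\tilde\rho|_F \le (P^\sigma|_F)\,\tilde\rho|_F$; pairing with the invariant measure $\mu^\sigma_F$ (positive on $F$) and using $\mu^\sigma_F P^\sigma|_F = \mu^\sigma_F$ forces equality coordinatewise on $F$, so $\tilde\rho|_F$ is a right Perron eigenvector of the irreducible stochastic matrix $P^\sigma|_F$, hence constant, say $\tilde\rho_q = c_F$ on $F$. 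Feeding $\tilde\rho|_F = c_F\mathbf{1}$ into the second family of inequalities restricted to $F$ and again pairing with $\mu^\sigma_F$ yields $c_F\,\langle\mu^\sigma_F,\tau^\sigma\rangle \le \langle\mu^\sigma_F,r^\sigma\rangle$, with positive denominator by the non-Zeno assumption. Iterating the first inequality (monotonicity of $P^\sigma\ge 0$) and passing to the Ces\`aro limit $P^{\sigma,\star}$ gives $\tilde\rho_q \le (P^{\sigma,\star}\tilde\rho)_q = \sum_{F\in\Fcal(\sigma)}\phi^\sigma_{F,q}\,c_F \le \sum_{F\in\Fcal(\sigma)}\phi^\sigma_{F,q}\,\langle\mu^\sigma_F,r^\sigma\rangle/\langle\mu^\sigma_F,\tau^\sigma\rangle$; minimizing over $\sigma$ and invoking Theorem~\ref{thm:ac}(i) (equivalently Corollary~\ref{coro:complex}) gives $\tilde\rho_q \le g^*(q)$.

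Next I would show the bound is \emph{attained} by the throughput. Let $(\rho^*,u^*)$ be an affine stationary regime from Theorem~\ref{thm:stationary}(i), so that it solves~\eqref{eq:L1}--\eqref{eq:L2}, and put $u' := u^* + t_0\rho^*$ for a parameter $t_0\ge 0$. The first LP constraints hold for $\rho^*$ by~\eqref{eq:L1}. For the second ones, substituting $u'$ and rearranging shows the $(q,p)$-constraint is equivalent to a fixed constant being $\le t_0\big(\kappa_q^p\sum_{q'}\beta^p_{qq'}\rho^*_{q'} - \rho^*_q\big)$: this right-hand side vanishes when $p$ achieves the minimum in~\eqref{eq:L1} (the inequality then holding by~\eqref{eq:L2}) and is strictly positive otherwise, so all the finitely many constraints hold once $t_0$ is large enough — exactly the device used in the proof of Proposition~\ref{prop-lex}. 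Hence $(\rho^*,u')$ is feasible with objective $\sum_q\rho^*_q = \sum_q e_q g^*(q)$, which by the upper bound is the optimal value. Finally, any optimal $(\rho,u)$ is in particular feasible, so $\rho_q \le e_q g^*(q) = \rho^*_q$ for every $q$ while $\sum_q\rho_q = \sum_q\rho^*_q$; therefore $\rho_q = \rho^*_q$ for all $q$, i.e.\ $\rho$ is the throughput. Polynomial-time computability follows since the LP has $2|\Qcal|$ variables and $2\sum_q|q\inc|$ constraints, of size polynomial in the net's description.

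The main obstacle, I expect, is the bookkeeping around the \emph{multichain} structure in the upper-bound step: justifying that the two families of inequalities, restricted to a single recurrent class and contracted against its invariant measure, collapse exactly to $c_F \le \langle\mu^\sigma_F,r^\sigma\rangle/\langle\mu^\sigma_F,\tau^\sigma\rangle$, and then propagating this from the recurrent classes to the transient states through $P^{\sigma,\star}$. The second delicate point is the treatment of the non-tight constraints in~\eqref{eq:L1} via the $t_0$-shift, which is needed precisely because the LP imposes the second family of constraints over \emph{all} $p\in q\inc$, not only over the argmin set $q\inc_{*}$.
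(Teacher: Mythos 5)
Your proof is correct, but it takes a genuinely different route from the paper. The paper's proof is a one-line black-box citation of Denardo and Fox~\cite[Th.~2]{Denardo-Fox68}: after normalizing by the stoichiometric invariant $e$, the LP is recognized as their superharmonic characterization of the optimal average cost of a multichain undiscounted SMDP, with the weight vector $\nu$ chosen equal to $e$. What you do instead is re-prove the relevant direction of that theorem from the ingredients already available in the paper: the upper bound $\rho_q\le e_q g^*(q)$ for every feasible point is obtained by contracting the two constraint families against the invariant measures of the final classes of each policy and propagating through the spectral projector $P^{\sigma,\star}$ (exactly the chain-structure apparatus of Section~4), and attainability is obtained by feeding the stationary regime of Theorem~\ref{thm:stationary}(i) into the LP after the same $t_0$-shift that appears in the proof of Proposition~\ref{prop-lex} to handle the constraints indexed by non-minimizing places $p\notin q\inc_*$. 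The termwise squeeze at the end (optimal objective equals $\sum_q e_q g^*(q)$ while each coordinate is bounded by $e_q g^*(q)$) correctly yields that \emph{every} optimal solution has $\rho$ equal to the throughput, which is the precise claim of the corollary. Your approach buys self-containedness and makes visible why the weighted objective $\nu=e$ works and why the second family of constraints must range over all of $q\inc$; the paper's approach buys brevity at the cost of deferring all of this to the reference. The two are mathematically consistent, and your identification $\rho_q=e_q g^*(q)$ via Proposition~\ref{prop-lex}, Correspondence Theorem~\ref{thm:correspondence2} and the uniqueness part of Theorem~\ref{thm:ac}(ii) is exactly the content of Corollary~\ref{coro:complex}, which you could cite directly to shorten that step.
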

\begin{proof}
  This is an application of a theorem of Denardo and Fox~\cite[Th.~2]{Denardo-Fox68} on the undiscounted SMDP with value function $\tilde{z} = (z_q/e_q)_{q\in\Qcal}$. They indeed prove that for any positive vector $\nu\in\R^{\Qcal}$, the throughput vector $\tilde{\rho}$ is solution of the LP whose criterion is $\sum_{q\in\Qcal} \nu_k \tilde{\rho}_q$ and whose feasibility set is defined by inequalities $\tilde{\rho}_q \leq \sum_{q'\in\Qcal}e_q^{-1}\kappa^p_q\beta^p_{qq'}e_{q'}\tilde{\rho}_{q'}$ and $\tilde{u}_q \leq e_q^{-1}c^p_q - \tilde{\rho}_q\tau_p + \sum_{q'\in\Qcal}e_q^{-1}\kappa^p_q\beta^p_{qq'}e_{q'}\tilde{u}_{q'}$ for all $q\in\Qcal$ and $p\in q\inc$. Choosing $\nu = e$ and switching back to variables $(\rho,u)$ gives the announced result.
  Eventually, recall that linear programs can be solved in (weak) polynomial time
  by the ellipsoid or by interior point methods.
\end{proof}

In their work~\cite{gaujal2004optimal}, Gaujal and Giua developed a closely related linear programming approach, derived directly from Little's law, rather than from the theory of semi-Markov processes. Their formulation has same objective function and a feasibility set that only differs from the one of Corollary~\ref{coro:LP} for transitions in $\Qpsel$. However, by applying the conflict-free transformation that they suggest and that we have introduced in our last remark after~Correspondence Theorem~\ref{thm:correspondence2}, we can
recover the formulation of~\cite{gaujal2004optimal} from Corollary~\ref{coro:LP}.

\medskip
The asymptotic behavior of the value function in large horizon has been extensively studied~\cite{schweitzer79,spectral}. As a corollary of these results, we
arrive at:
\begin{corollary}[Asymptotic Periodicity]\label{coro:periodic}
  Suppose that Assumption~\ref{ass:A} holds, and that the holding times are integer (so that $T \in \N$). Then, there exists an integer   $c$, which is the order of an element of $\mathrm{Sym}(nT)$, such that, for all $0\leq r\leq c-1$,
  $z(tc+r)-\rho(tc+r)$ converges as $t\to\infty$, for integer values of $t$.
\end{corollary}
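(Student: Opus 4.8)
The plan is to reduce this to a finite-dimensional, unit-delay iteration and then invoke the Schweitzer--Federgruen periodicity theorem. Write $n:=|\Qcal|$. Since the holding times are integer, Corollary~\ref{coro:reduc-to1} supplies a concave, order-preserving, piecewise affine self-map $F$ of $\R^{\Qcal\times\{1,\dots,T\}}\simeq\R^{nT}$, additively homogeneous with respect to the lifted vector $(e_q)_{q\in\Qcal,\,t\in\{1,\dots,T\}}$, such that the augmented counter $\tilde z(t):=(z(t),\dots,z(t-T+1))$ satisfies $\tilde z(t)=F(\tilde z(t-1))$ for every integer $t\geq1$. Since $F$ is one fixed map along the whole trajectory, iteration gives $\tilde z(t)=F^{t}(\tilde z(0))$ for all integers $t\geq0$. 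First I would observe that $F$ meets every hypothesis of Theorem~\ref{thm:schweitzer} (concave, order-preserving, additively homogeneous, piecewise affine), so that theorem yields an integer $c$, equal to the order of an element of $\mathrm{Sym}(nT)$, such that for each $0\leq r\leq c-1$ the sequence $F^{kc+r}(\tilde z(0))-(kc+r)\chi(F)$ converges as $k\to\infty$.

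Next I would identify the escape rate $\chi(F)$. By definition $\chi(F)=\lim_{t\to\infty}F^{t}(\tilde z(0))/t=\lim_{t\to\infty}\tilde z(t)/t$, the limit over integer $t$. By Theorem~\ref{thm:stationary}(ii) each component satisfies $z_q(t)=\rho_q t+O(1)$, hence $z(t-j)/t\to\rho$ for every $0\leq j\leq T-1$, so $\chi(F)$ is the vector $\tilde\rho$ all of whose $T$ blocks equal $\rho$. Substituting $\chi(F)=\tilde\rho$ and $F^{kc+r}(\tilde z(0))=\tilde z(kc+r)$ into the convergence statement above, and projecting onto the first block of coordinates (the one indexed by $(q,1)$, carrying $z(\cdot)$), we get that $z(kc+r)-(kc+r)\rho$ converges as $k\to\infty$. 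Renaming $k$ as $t$ finishes the argument, and the bound on $c$ is inherited verbatim from Theorem~\ref{thm:schweitzer}.

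I do not expect a genuine obstacle here: the proof is essentially an assembly of Corollary~\ref{coro:reduc-to1}, Theorem~\ref{thm:stationary}, and Theorem~\ref{thm:schweitzer}. The two points requiring a little care are bookkeeping ones: that the map $F$ of Corollary~\ref{coro:reduc-to1} is trajectory-independent so that $\tilde z(t)=F^{t}(\tilde z(0))$ is legitimate, and that the ``piecewise affine'' regularity it produces is precisely what Theorem~\ref{thm:schweitzer} requires. The only slightly substantive point is the identification $\chi(F)=\tilde\rho$; even there, the full deviation estimate of Theorem~\ref{thm:stationary}(ii) is more than enough — mere existence of $\lim_{t\to\infty}z(t)/t=\rho$ already pins down $\chi(F)$, since the blocks of $\tilde z(t)/t$ are time-shifts of $z(t)/t$.
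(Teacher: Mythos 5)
Your proposal is correct and follows exactly the paper's route: reduce to the unit-delay finite-dimensional iteration $\tilde z(t)=F(\tilde z(t-1))$ via Corollary~\ref{coro:reduc-to1} and then apply Theorem~\ref{thm:schweitzer}. The only additions you make — identifying $\chi(F)$ with the lifted throughput vector via Theorem~\ref{thm:stationary}(ii) and projecting onto the first block — are details the paper leaves implicit, and they are handled correctly.
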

Whereas the earlier results of this section
hold for irrational holding times, the integrality
restriction in Corollary~\ref{coro:periodic} is essential.
\begin{proof}
  We use Corollary~\ref{coro:reduc-to1} to reduce to a system
  of the form $\tilde{z}(t) = F(\tilde{z}(t-1))$, where
  $\tilde{z}(t)$ is the augmented vector $(z(t),\dots,z(t-T+1))\in \R^{\Qcal\times \{1,\dots,T\}}$,
  and $F$ is order-preserving, additively homogenous with respect to $e$, piecewise affine and concave.
    Then, the result follows from~Theorem~\ref{thm:schweitzer}.
\end{proof}

\subsection{Application to model~\eqref{SAMU1}}
\label{sec:correspondencecasestudy}

We illustrate the above results on our running example~\eqref{SAMU1}. Since $z_1$ and $z_3$ have two upstream places each, there are a total of four policies. Though it is possible to use~\eqref{e-rhomin} to determine $\rho$, solving the lexicographic equations~\eqref{eq:L1}-\eqref{eq:L2} turns out to be easier in practice. We also remark (for instance on~\Cref{fig:SMDP}) that $z_2$ (resp.\ $z_4$ and $z_5$) are always in the same recurrence class as $z_1$ (resp.\ $z_3$), in the~sense of $\,$SMDP's$\,$ chains. As a result, we shall just focus on the lexicographic optimality equation 
\eject
\noindent  on $\rho_1$ and $\rho_3$:
\[
\left\{\begin{array}{rcl}
  (\rho_1, u_1) & = & (\lambda ,\, 0)  \wedge  ((1-\pi)\rho_1+\rho_3,\, N_A+(1-\pi)(u_1-\rho_1\tau_1)+u_3-\rho_3\tau_2)\\[1ex]
  (\rho_3, u_3) & = & \pi(\rho_1 ,\, u_1-\rho_1\tau_1)  \wedge  (\rho_3,\, N_P + u_3 -\rho_3(\tau_2+\tau_3)\big)\\
\end{array}\right.
\]
where $\wedge$ now stands for the $\min^{\textsc{lex}}$ operation.
Each policy (\ie~each choice of minimizing term in both equations) leads to a value of $\rho_1$ and $\rho_3$ and provides linear inequalities characterizing the associated validity domain. Eventually, we obtain $\rho_1=\rho^*$ and $\rho_3=\pi\rho^*$ with
\[\rho^*=\min\left(\lambda\,,\,\frac{N_A}{\tau_1+\pi\tau_2}\,,\,\frac{N_P}{\pi(\tau_2+\tau_3)}\right) \]

\noindent in which we retrieve the piecewise-affine form of $\rho$ showed in Corollary~\ref{coro:complex}.

  \begin{figure}[h]
    \centering
    \begin{tabular}{lr}
    \def\tkzscl{.5}  

\tdplotsetmaincoords{63}{20}
\hskip-2ex
\begin{tikzpicture}[tdplot_main_coords,scale=\tkzscl]

    \begin{scope}[scale=0.9]

  \draw[->,>=latex] (0,0,0) -- coordinate (x axis mid) (10,0,0) node[below] {$N_A$};
  \draw[->,>=latex] (0,0,0) -- coordinate (y axis mid) (0,7,0) node[left] {$N_P$};
  \draw[->,>=latex] (0,0,0) -- (0,0,6) node[left] {$\rho^*$};

  \foreach \x in {0,...,9}
    \draw[thin, opacity=.2] (\x,0)--(\x,6.5) ;
  \foreach \y in {0,...,6}
    \draw[thin, opacity=.2] (0,\y)--(9.5,\y) ;

  \coordinate (x0) at (0,0);
  \coordinate (x1) at (9.5,3,4);
  \coordinate (x2) at (6,6.5,4);
  \coordinate (x10) at (9.5,3,0);
  \coordinate (x20) at (6,6.5,0);
  \coordinate (x3) at (6,3,4);
  \coordinate (x30) at (6,3,0);
  \coordinate (xh0) at (6,0);
  \coordinate (xh1) at (6,6.5,4);
  \coordinate (xh4) at (9.5,6.5,4);
  \coordinate (xh10) at (6,6.5,0);
  \coordinate (xh40) at (9.5,6.5,0);
  \coordinate (xh2) at (0,3);
  \coordinate (xh3) at (9.5,3);
  \coordinate (xh5) at (9.5,0);
  \coordinate (xh6) at (0, 6.5);

 \draw (x0) -- (x3);
 \draw (x1) -- (x3);
 \draw (x2) -- (x3);

 \draw[dashed] (x0)  -- (x30);
 \draw[dashed] (x10) -- (x30);
 \draw[dashed] (x20) -- (x30);
 \draw[dashed] (x3)  -- (x30);
 \draw[dashed] (x1)  -- (x10);
 \draw[dashed] (xh1)  -- (xh10);

  \fill[black, opacity=0.125] (x0) -- (x3) -- (xh1) -- (xh6) -- cycle;
  \fill[black, opacity=0.05] (x3) -- (x1) -- (xh4) -- (x2) -- cycle;
  \fill[black, opacity=0.1] (x0) -- (x3) -- (x1) -- (xh5) -- cycle;

  \fill[red, opacity=0.15] (x0) -- (x30) -- (xh10) -- (xh6) -- cycle;
  \fill[green,opacity=0.15] (x30) -- (x10) -- (xh40) -- (x20) -- cycle;
  \fill[red,opacity=0.15] (x0) -- (x30) -- (x10) -- (xh5) -- cycle;

\end{scope}

\end{tikzpicture}
 &
    \def\tkzscl{.5}  

\begin{tikzpicture}[scale=\tkzscl]
\begin{scope}[scale=0.9]
\draw[step=1,black,thin,opacity = 0.1] (0,0) grid (9.5,6.5);
  \draw[->,>=latex] (0,0) -- coordinate (x axis mid) (10,0) node[below] {$N_A$};
  \draw[->,>=latex] (0,0) -- coordinate (y axis mid) (0,7) node[left] {$N_P$};

  \coordinate (x0) at (0,0);
  \coordinate (x1) at (9.5,3);
  \coordinate (x2) at (6,6.5);
  \coordinate (x3) at (6,3);
  \coordinate (xh0) at (6,0);
  \coordinate (xh1) at (6,6.5);
  \coordinate (xh2) at (0,3);
  \coordinate (xh3) at (9.5,3);
  \coordinate (xh4) at (9.5,6.5);
  \coordinate (xh5) at (9.5,0);
  \coordinate (xh6) at (0, 6.5);

 \draw[thick, loosely dashed] (x0) -- (x3);
 \draw[thick, loosely dashed] (x1) -- (x3);
 \draw[thick, loosely dashed] (x2) -- (x3);

  \fill[red, opacity=0.4] (x0) -- (x3) -- (xh1) -- (xh6) -- cycle;
  \fill[green,opacity=0.4] (x3) -- (x1) -- (xh4) -- (x2) -- cycle;
  \fill[red,opacity=0.4] (x0) -- (x3) -- (x1) -- (xh5) -- cycle;
\node[align = center] at (2.7,3.9) {\small$\rho^* = \displaystyle\frac{N_A}{\tau_1+\pi\tau_2}$};
\node[align = center] at (7.8,4.7) {\small$\rho^* = \lambda$};
\node[align = center] at (6.3,1.3) {\small$\rho^* = \displaystyle\frac{N_P}{\pi(\tau_2+\tau_3)}$};
   \draw (6,-.1) -- (6,.1) node[color=black, label={[color=black,label distance=.1cm]below:$\lambda(\tau_1+\pi\tau_2)$}] {};
  \draw (-.1,3) -- (.1,3) node[color=black,anchor=east,label={[color=black,rotate=90, label distance=.1cm]$\lambda\,\pi(\tau_2+\tau_3)$}] {};
\end{scope}
\end{tikzpicture}
    \end{tabular}
    \vspace{-2mm}
    \caption{The phase diagram of the~\eqref{SAMU1} system}\label{fig:phasediagram}
\end{figure}
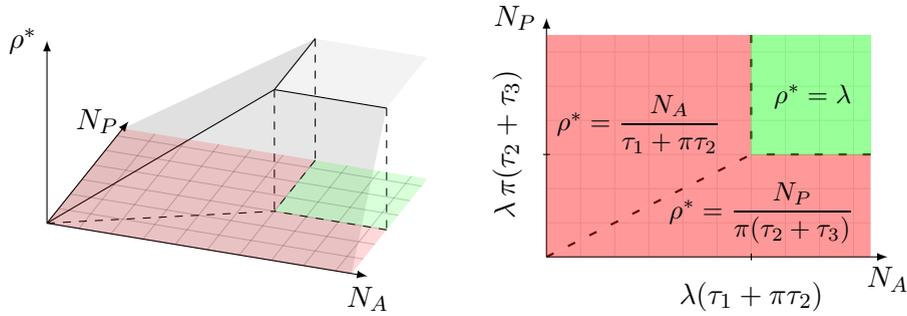

We interpret this result as follows: the ``handling speed'' $\rho_1$ of the MRAs and $\rho_3$ of the emergency physicians are always entangled and depend on three key dimensioning parameters: the arrival rate of inbound calls $\lambda$, the maximum MRA throughput $N_A/(\tau_1+\pi\tau_2)$ and the maximum physician throughput $N_P/\pi(\tau_2+\tau_3)$. We recognize in these last two terms a number of agents divided by a characteristic cycle time. Hence, if $N_A\geq N_A^*\coloneqq \lambda(\tau_1+\pi\tau_2)$ and $N_P\geq N_P^*\coloneqq\lambda\pi(\tau_2+\tau_3)$ (this delineates the green phase on~\Cref{fig:phasediagram}), we have $\rho^*=\lambda$ which means that all inbound calls are handled. If $N_A\leq N_A^*$ and $N_A/(\tau_1+\pi\tau_2)\leq N_P/\pi(\tau_2+\tau_3)$, there are too few MRAs, therefore they impose their maximum handling speed to the whole system (indeed emergency physicians wait for MRAs to pass them calls). Conversely, if $N_P\leq N_P^*$ and $N_P/\pi(\tau_2+\tau_3)\leq N_A/(\tau_1+\pi\tau_2)$, there are too few emergency physicians and they impose their handling speed to the whole system again (MRAs are waiting for doctors to take their calls and be released). This is illustrated by the phase diagram depicted on \Cref{fig:phasediagram}: a staffing choice $(N_A,N_P)$ made in real-life call-enter will indicate the long-run throughput $\rho^*$ of the system. We verify that the cells of the phase diagram are the regions over which $\rho^*$ is affine (as a function of $N_A$ and $N_P$).

    To sum it up, there are three different possible regimes, among which only one is fluid and guarantees that all calls are answered. This analysis can lead to minimal dimensioning recommendations: for such an emergency call center and considering that calls arrive with rate $\lambda$, at least $\lceil\lambda(\tau_1+\pi\tau_2)\rceil$ MRAs and $\lceil\lambda\pi(\tau_2+\tau_3)\rceil$ emergency physicians are needed.

\section{Stationary regimes in presence of priorities}

\subsection{From stationary regimes to systems on germs}

There is a convenient and more abstract way to write the lexicographic system \eqref{eq:L1}-\eqref{eq:L2} of Proposition~\ref{prop-lex}, that we somehow already used in~\Cref{sec:correspondencecasestudy}, using germs of affine functions. A {\em germ} at infinity of a function $f$ is an equivalence class for the relation which identifies two functions that coincide for sufficiently large values of their argument. The tuple $(\rho,u)\in\R^2$ will represent the germ of the affine function $t \mapsto \rho t +u$. The pointwise order on functions induces a total order on germs of affine
functions, which coincides with the lexicographic order on the coordinates
$(\rho,u)$, the $\rho$ coordinate being considered first.
We complete $\R^2$ by introducing a greatest element $\top$
with respect to the lexicographic minimum.
Then, $\Germ:=\R^2\cup\{\top\}$ equipped with the operations $\min^{\textsc{lex}}$ and $+$ is a semifield (by convention, for all $g\in\Germ$, $g+\top=\top+g=\top$).
The multiplicative group $\R_{>0}$ acts
on $\Germ$ by setting $a(\rho,u):=(a\rho,au)$,
for $a>0$ and $(\rho,u)\in \R^2$, and $a\top = \top$. If $(\rho, u)$ is the germ of $f$, it is immediate to see that $(\rho, u - \rho\tau)$ is the germ of $t\mapsto f(t-\tau)$.

\medskip
In this framework, the system~\eqref{eq:L1}-\eqref{eq:L2} derived from~\eqref{eq:petri_to_smdp} becomes:
\begin{equation}
  \label{eq:L}
  \forall q\in\Qcal \quad (\rho_q,u_q) = \underset{p\in q\inc}{\min{}^{\textsc{lex}}}\Big((0, c^p_q) + \kappa_q^p\sum_{q'\in \Qcal}\beta_{qq'}^p(\rho_{q'},u_{q'}-\rho_{q'}\tau_p)\Big)
      \tag{L}
\end{equation}

If there are some priority routings, Correspondence Theorems~\ref{thm:correspondence1} and~\ref{thm:correspondence2} do not hold anymore: the dynamics has still the form of a Bellman equation,
but the factors $\beta_{qq'}^p$ in~\eqref{eq:petri_to_smdp} take negative values,
implying that some ``probabilities'' are negative. However, it is still relevant
to look for affine stationary regimes, and we next show that these
regimes are the solutions of a lexicographic system over germs similar to~\eqref{eq:L}.
To do so, we derive other germ equations for transitions ruled by priority routing, whose dynamics is recalled in~\Cref{table:equation_counters}.
In particular, one needs to address how the expressions of the form $z(t^-)$ behave when passing to germs. The problem may seem ill-posed since this value coincides with $z(t)$ for ultimately affine functions. Nonetheless, in~\cite{allamigeon2015performance}, it has been shown that the problem of looking for ultimately affine stationary regimes on the $\delta$-discretization of the fluid dynamics is well-posed. In this discretized model, the term $z(t^-)$ is replaced by $z(t- \delta)$. The detour \textit{via} this discretized dynamics enables one to prove that, regardless of the choice of $\delta$, small enough, some terms cannot achieve the minimum in the priority dynamic equations, and thus can be removed. This leads to the last equation of~\Cref{table:germs_equation_counters} and the following result.

\definecolor{gray1}{gray}{0.82}
\definecolor{gray2}{gray}{0.94}
\setlength{\arrayrulewidth}{.8pt}
\begin{table}[ht]
\vspace*{-2mm}
  \caption{Dynamic equations followed by germs of transitions counter functions}
  \label{table:germs_equation_counters}\vspace{-1mm}
  \makebox[\textwidth][c]{
 \scalebox{0.96}{
     \begin{tabular}{C{1.6cm}!{\color{white}{\vrule width .8pt}}S{L{13.7cm}}}
      \rowcolor{gray1} Type  & \multicolumn{1}{c}{Germs equation in stationary regime}  \\
      \arrayrulecolor{white}\hline
      \rowcolor{gray2} \(q\in\Qsync\)  &\hspace*{-1mm}  \(\displaystyle \, (\rho_q,u_q)= \hskip.2ex \underset{p\in q\inc}{{\min{}^{\textsc{lex}}}} \hskip.6ex \alpha_{qp}^{-1}\bigg((0,m_p) + \!\! \sum_{q'\in p\inc} \alpha_{pq'} \, (\rho_{q'},u_{q'}-\rho_{q'}\tau_p)\bigg)\)  \\
      \arrayrulecolor{white}\hline
      \rowcolor{gray2} \(q\in\Qpsel\)  &\hspace*{-1mm} \(\displaystyle \, (\rho_q,u_q)= \hskip2.4ex \pi_{qp} \hskip.2ex \cdot \hskip.8ex  \alpha_{qp}^{-1}\bigg((0,m_p) + \!\! \sum_{q'\in p\inc} \alpha_{pq'} \, (\rho_{q'},u_{q'}-\rho_{q'}\tau_p)\bigg) \)  \\
      \arrayrulecolor{white}\hline
      \rowcolor{gray2} \(q\in\Qprio\)  &\hspace*{-1mm}  \(\displaystyle \, (\rho_q,u_q)\!=\! \underset{\substack{p\in q\inc\\[1ex]\underset{q'\succ_p q}{\sum}\rho_{q'}=0}}{\min{}^{\textsc{lex}}} \alpha_{qp}^{-1}\bigg((0,m_p) + \!\! \sum_{q'\in p\inc}\! \alpha_{pq'} \, (\rho_{q'},u_{q'}-\rho_{q'}\tau_p)-\!\!\!\!\sum_{q' \in p\out\setminus\{q\}}\!\alpha_{q'p}(\rho_{q'},u_{q'})\bigg)\) \\
      \arrayrulecolor{white}\hline
    \end{tabular} }
  }
  \vspace{-2mm}
\end{table}

\begin{theorem}\label{th:germ_prio}
The ultimately affine stationary regimes $z(t) = \rho t + u$ of the dynamics of~\Cref{table:equation_counters} are solutions of the germ equations of~\Cref{table:germs_equation_counters}.
\end{theorem}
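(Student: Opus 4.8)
The plan is to argue transition by transition, following the partition $\Qcal=\Qsync\cup\Qpsel\cup\Qprio$, that substituting an affine trajectory $z(t)=\rho t+u$ into the dynamics of~\Cref{table:equation_counters} forces the pair $(\rho,u)$ to satisfy the corresponding row of~\Cref{table:germs_equation_counters}. The common mechanism is that, $z$ being a solution, each equation of~\Cref{table:equation_counters} becomes an identity, valid for all $t\geq 0$, between the affine function $t\mapsto\rho_qt+u_q$ and a minimum of finitely many affine functions of $t$ (reduced to a single affine function when $q\in\Qpsel$); and a minimum of finitely many affine functions agrees, for $t$ large, with the term that is smallest for the lexicographic order on the pair (slope, intercept). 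Comparing the two sides for $t$ large therefore turns the identity into an equation in $\Germ$.

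For $q\in\Qsync\cup\Qpsel$ this is immediate, and is precisely the ``only if'' direction of~\Cref{prop-lex}: substituting $z(t)=\rho t+u$ into the first two rows of~\Cref{table:equation_counters}, the term attached to a place $p\in q\inc$ has germ $\alpha_{qp}^{-1}\big((0,m_p)+\sum_{q'\in p\inc}\alpha_{pq'}(\rho_{q'},u_{q'}-\rho_{q'}\tau_p)\big)$, carrying the prefactor $\pi_{qp}$ in the $\Qpsel$ case, and the large-$t$ comparison gives $(\rho_q,u_q)=\min{}^{\textsc{lex}}_{p\in q\inc}(\cdot)$, which is exactly what the first two rows of~\Cref{table:germs_equation_counters} assert.

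The delicate case is $q\in\Qprio$, because of the term $z_{q'}(t^-)$ in~\eqref{eq:petri4}. An affine trajectory is continuous, so $z_{q'}(t^-)=z_{q'}(t)$, and a naive substitution would only produce a minimum over all of $q\inc$, without the side condition $\sum_{q'\succ_p q}\rho_{q'}=0$ that appears in~\Cref{table:germs_equation_counters}. To recover this restriction I would route the argument through the $\delta$-discretized dynamics of~\cite{allamigeon2015performance}, in which $z_{q'}(t^-)$ is replaced by $z_{q'}(t-\delta)$: by the results of~\cite{allamigeon2015performance}, adapted to the present model, this discretized system is well posed and, for $\delta>0$ small enough, shares its affine stationary regimes with the fluid one. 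Substituting $z(t)=\rho t+u$ into the discretized priority equation, the germ of $z_{q'}(t-\delta)$ is $(\rho_{q'},u_{q'}-\delta\rho_{q'})$, so the term attached to a place $p\in q\inc$ equals $T_p+\big(0,\,\delta\,\alpha_{qp}^{-1}\sum_{q'\succ_p q}\alpha_{q'p}\rho_{q'}\big)$, where $T_p$ is the bracketed germ appearing in the $\Qprio$ row of~\Cref{table:germs_equation_counters}. Since $\rho\in(\Rplus)^{\Qcal}$, the correction is nonnegative for the lexicographic order, and strictly positive exactly when $\sum_{q'\succ_p q}\rho_{q'}\neq 0$; hence, for every $\delta>0$, every place $p$ with $\sum_{q'\succ_p q}\rho_{q'}>0$ contributes a term lying strictly above $T_p$. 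As $\delta$ ranges over a small enough interval the minimizing place is constant, while the stationary regime is $\delta$-independent, so this minimizing place cannot be one of those $p$'s; these terms may therefore be deleted from the minimum without affecting its value, leaving exactly $\min{}^{\textsc{lex}}$ over $\{p\in q\inc:\sum_{q'\succ_p q}\rho_{q'}=0\}$ of the $T_p$'s, which is the last row of~\Cref{table:germs_equation_counters}.

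I expect the last step to be the main obstacle: making rigorous the passage between the fluid dynamics and its $\delta$-discretization, and the deletion of the dominated terms. This rests on importing the well-posedness and stationary-regime results of~\cite{allamigeon2015performance} and extending them to the richer setting considered here (arc valuations, several levels of priority, preselection routing), and on checking that for $\delta$ small the germ of the discretized stationary regime does not depend on $\delta$. By contrast, the $\Qsync$/$\Qpsel$ part and the germ bookkeeping for $\Qprio$ are routine.
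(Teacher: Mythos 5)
Your handling of $\Qsync$ and $\Qpsel$, the reduction of the whole difficulty to the priority case, and the computation of the $\delta$-corrected germ of the term attached to $p\in q\inc$ as $T_p+\bigl(0,\,\delta\,\alpha_{qp}^{-1}\sum_{q'\succ_p q}\alpha_{q'p}\rho_{q'}\bigr)$ all match the paper. Where you genuinely diverge is in how the dominated terms are eliminated. The paper never varies $\delta$: for a \emph{fixed} $\delta>0$ it writes the germ inequality coming from the equation of $q_1\in\Qprio$ (each term of a lexicographic minimum bounds the minimum from above), and the analogous inequality for the transition $q_2$ of $p\out$ with the \emph{least} priority, for which the sum over $q'\succ_p q_2$ is empty and hence no $\delta$-correction occurs. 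Applying the order-preserving affine map $g\mapsto\alpha_{q_2p}\alpha_{q_1p}^{-1}\bigl(g-(\rho_{q_2},u_{q_2})\bigr)+(\rho_{q_1},u_{q_1})$ to the latter yields the clean bound $(\rho_{q_1},u_{q_1})\le T_p$, with no $\delta$ term; comparing it with the $p$-term of $q_1$'s own equation shows that this term is strictly above $(\rho_{q_1},u_{q_1})$ as soon as $\sum_{q'\succ_p q_1}\rho_{q'}>0$, so it cannot attain the minimum. The elimination is thus carried out inside a single instance of the discretized system, by playing the equations of two transitions of $p\out$ against each other.

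Your closing step instead lets $\delta$ range over an interval and derives a contradiction from the $\delta$-independence of the attained value. This is exactly the point you flag as the main obstacle, and it is a genuine gap as written: you need the \emph{same} pair $(\rho,u)$ to satisfy the discretized germ equation for all sufficiently small $\delta$, which is stronger than the well-posedness statement the paper imports from~\cite{allamigeon2015performance} and is not established in your proposal (a priori the discretized stationary regimes could depend on $\delta$, or the fluid regime could satisfy the discretized equation for no positive $\delta$ at all). The fix is either to prove that uniformity lemma, or --- more economically --- to notice that your own bookkeeping already contains the missing ingredient: the $p$-term in the equation of the least-priority transition of $p\out$ carries no $\delta$-correction, and rescaling that single inequality gives $(\rho_{q_1},u_{q_1})\le T_p$ for every $q_1\in p\out$ at one fixed $\delta$, after which your deletion argument goes through unchanged.
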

\begin{proof}
  We prove the result for transitions ruled by priority, since the result is direct for other patterns.
  Recall from~\Cref{table:equation_counters} the counter equation followed by transition $q\in\Qprio$:
  \[
    z_q(t)= \min_{p\in q\inc} \alpha_{qp}^{-1}\bigg(m_p + \!\! \sum_{q'\in p\inc} \alpha_{pq'} \, z_{q'}(t-\tau_p)-\!\!\sum_{q' \mathrel{\prec_p} q}\!\alpha_{q'p}z_{q'}(t)-\!\!\sum_{q' \mathrel{\succ_p} q}\!\alpha_{q'p}z_{q'}(t^-)\bigg)
    \]
    Our claim is that due to the priority mechanism, some terms in the above minimum cannot realize minimality and thus can be removed.

\medskip
    Let $q_1\in\Qprio$, $p\in q\inc$ and $q_2\in p\out$, with $q_2\neq q_1$. Let us suppose that $q_1\prec_p q_2$.
    Substituting counters by their corresponding germs and
    replacing $t^-$ by $t-\delta$ with $\delta>0$, we have
    \begin{align*}
      (\rho_{q_1},u_{q_1}) &\leq \alpha_{q_1 p}^{-1}\bigg((\rho^p_{\Sigma},u^p_{\Sigma})-\!\!\sum_{q' \prec_p\, q_1}\!\alpha_{q'p}(\rho_{q'},u_{q'}) -\!\!\sum_{q' \succ_p\, q_1}\!\alpha_{q'p}(\rho_{q'},u_{q'}-\rho_{q'}\delta) \bigg) \\
           &\leq \alpha_{q_1 p}^{-1}\bigg((\rho^p_{\Sigma},u^p_{\Sigma})-\!\!\sum_{q' \neq\, q_1}\!\alpha_{q'p}(\rho_{q'},u_{q'}) +\!\!\sum_{q' \succ_p\, q_1}\!\alpha_{q'p}(0,\rho_{q'}\delta) \bigg) \label{germU1}\tag{U1}\\
    \end{align*}
      \vspace{-1cm}

\noindent  where $(\rho^p_{\Sigma},u^p_{\Sigma})$ stands for the germ $\displaystyle(0,m_p) + \!\! \sum_{q'\in p\inc} \alpha_{pq'} \, (\rho_{q'},u_{q'}-\rho_{q'}\tau_p)$.
Similarly, we have
\begin{align*}
  (\rho_{q_2},u_{q_2}) &\leq \alpha_{q_2 p}^{-1}\bigg((\rho^p_{\Sigma},u^p_{\Sigma})-\!\!\sum_{q' \neq\, q_2}\!\alpha_{q'p}(\rho_{q'},u_{q'}) +\!\!\sum_{q' \succ_p\, q_2}\!\alpha_{q'p}(0,\rho_{q'}\delta) \bigg) \label{germU2}\tag{U2}\\
\end{align*}
\vspace{-1cm}

In both sides of the latter equation, let us apply the nondecreasing
mapping of $\Germ\to\Germ$ : $g\mapsto \alpha_{q_2p}\alpha_{q_1p}^{-1}\big(g - (\rho_{q_2},u_{q_2})\big)+(\rho_{q_1},u_{q_1})$. We obtain:

\[
  (\rho_{q_1},u_{q_1}) \leq \alpha_{q_1 p}^{-1}\bigg((\rho^p_{\Sigma},u^p_{\Sigma})-\!\!\sum_{q' \neq\, q_1}\!\alpha_{q'p}(\rho_{q'},u_{q'}) +\!\!\sum_{q' \succ_p\, q_2}\!\alpha_{q'p}(0,\rho_{q'}\delta) \bigg) \label{germU2p}\tag{U2'}\\
\]

Comparing~\eqref{germU2p} to~\eqref{germU1}, one can observe that if $\rho_{q'}>0$ for some $q_1\prec_p q'\preceq_p q_2$, then the right-hand side of~\eqref{germU2p} strictly bounds by below the one of~\eqref{germU1}, thus the equality in~\eqref{germU1} cannot be achieved, and the corresponding germ can be removed from the original minimum. This reasoning can be applied when $q_2$ is the transition of $p\out$ with the least priority, so that the inequality~\eqref{germU1} is strict whenever the sum $\sum_{q'\succ_p q_1}\rho_{q'}$ is positive (the $(\rho_q)_{q\in\Qcal}$ variables are nonnegative). Conversely, only the contributions of upstream places $p\in q_1\inc$ such that $\sum_{q'\succ_p q_1}\rho_{q'}=0$ remain in the minimum.
\end{proof}

\subsection{Case Study: application to the model~\eqref{eq:SAMU2}}
\label{sec:casestudyprio}
In this section, we follow up on the analysis of the model~\eqref{eq:SAMU2} involving priority rules. Using \Cref{table:equation_counters}, we can write the dynamics of the counter variables of the net. We present below a reduced system of equations where $z_2$, $z_4$, $z_6$, $z_6'$, $z_7$ and $z_7'$ have been substituted by expressions depending on $z_1$, $z_3$, $z_5$ and $z_5'$ only. For the sake of readability, we denote $z|^{t_2}_{t_1} \coloneqq z(t_2)-z(t_1)$, and $z|^t \coloneqq z(t)$.

\resizebox{.84\textwidth}{!}{
  \begin{minipage}{\textwidth}
      \begin{equation*}\label{eq:SAMU2}\tag{EMS-B}
      \hspace{-.75cm}\begin{array}{rcrclcl}
      z_1(t) &=& z_0\big|^t \;\hskip2.8ex\; &\wedge& \left(N_A+(1-\pi)\,z_1\big|^{t-\tau_1}+z_3\big|^{t-\tau_2}\right) & & \\[2ex]
      z_3(t) &=& \pi\,z_1\big|^{t-\tau_1}& & &\wedge&\left(N_R+z_3\big|^{t-\tau_2}+z_5\big|_{t}^{t-\tau_2}+z_5'\big|_{t}^{t-\tau_2}\right)\\[2ex]
      z_5(t) &=& \alpha z_3\big|^{t-\tau_2}&\wedge&\left(N_P+z_5\big|^{t-\tau_2-\tau_3}+z_5'\big|^{t-\tau_2-\tau_3}_{t^{-}}\right)&\wedge&\left(N_R   +z_3\big|^{t-\tau_2}_ {t^{-}}+z_5\big|^{t-\tau_2}+z_5'\big|^{t-\tau_2}_{t^{-}}\right)\\[2ex]
      z_5'(t)&=& (1-\alpha) z_3\big|^{t-\tau_2}&\wedge&\left(N_P+z_5\big|^{t-\tau_2-\tau_3}_t+z_5'\big|^{t-\tau_2-\tau_3}\right)&\wedge&\left(N_R    +z_3\big|^{t-\tau_2}_ {t^-}+z_5\big|^{t-\tau_2}_ t+z_5'\big|^{t-\tau_2}\right)\\[2ex]
      \end{array}
    \end{equation*}
  \end{minipage}
}

\vspace{.3cm}

Applying Theorem~\ref{th:germ_prio} and the equations of~\Cref{table:germs_equation_counters} to the model~\eqref{eq:SAMU2} provide the following system on the affine germs of counter variables (again after substitutions of some germs easily expressed in terms of those assiociated with counters $z_1$, $z_3$, $z_5$ and $z_5'$):

\resizebox{.83\textwidth}{!}{
  \begin{minipage}{\textwidth}
      \begin{equation*}\label{priogerm}
        \hspace{-.9cm}\begin{aligned}
  (\rho_1,u_1) &= (\lambda,0) \wedge ((1-\pi)\rho_1+\rho_3,N_A+(1-\pi)(u_1-\rho_1\tau_1)+u_3-\rho_3\tau_2)  \\[1ex]
  (\rho_3,u_3) &= (\pi\rho_1,\pi(u_1-\rho_1\tau_1)) \wedge(\rho_3,u_3+N_R-\rho_3\tau_2-(\rho_5+\rho_5')\tau_2) \\[1ex]
  (\rho_5,u_5) &= \left\{\begin{aligned}\alpha(\rho_3,u_3-\rho_3\tau_2)\wedge(\rho_5,u_5+N_P-\rho_5(\tau_2+\tau_3))\wedge(\rho_3,u_3+N_R-(\rho_3+\rho_5)\tau_2)  & \quad \text{if}\quad \rho_5'=0 \;\; \text{and}\;\;\rho_3 = 0  &\\ \alpha(\rho_3,u_3-\rho_3\tau_2)\wedge(\rho_5,u_5+N_P-\rho_5(\tau_2+\tau_3))& \quad \text{if}\quad \rho_5'=0 \;\; \text{and}\;\;\rho_3 > 0  &\\ \alpha(\rho_3,u_3-\rho_3\tau_2) & \quad \text{if}\quad \rho_5'>0 \end{aligned}\right. \\[1ex]
  (\rho_5',u_5') &= \left\{\begin{aligned}(1-\alpha)(\rho_3,u_3-\rho_3\tau_2)\wedge(\rho_5',u_5'+N_P-(\rho_5+\rho_5')(\tau_2+\tau_3))\wedge(\rho_5',N_R+u_5'-(\rho_5+\rho_5')\tau_2)  & \quad \text{if}\quad \rho_3=0  &\\(1-\alpha)(\rho_3,u_3-\rho_3\tau_2)\wedge(\rho_5',u_5'+N_P-(\rho_5+\rho_5')(\tau_2+\tau_3)) & \quad \text{if}\quad \rho_3>0 \end{aligned}\right. \\[1ex]
\end{aligned}
\end{equation*}
\end{minipage}
}

\medskip
The major difference of this system on germs compared with the one obtained for the model \eqref{SAMU1} in~\Cref{sec:correspondencecasestudy} is the necessity, brought by priorities, to distinguish cases on the possible bottleneck upstream places depending on their respective throughputs. We point out that the cases where $\rho_3=0$ in germs equations on $(\rho_5,u_5)$ and $(\rho_5',u_5')$ could acceptably be neglected for further analysis. Indeed, the first two equations above always ensure $\rho_3=\pi\rho_1$, and supposing $\rho_3=0$ and $\lambda>0$ leads to $\min(N_A,N_R)=0$ by combination of the attained germs. Therefore, the throughput $\rho_3$ is positive as soon as we suppose $\lambda>0$, $N_A>0$ and $N_R>0$, \ie, positive inflow of calls and positive number of agents to pick them up.
As a result, when this condition is met, the priority ruling the routing of tokens from the pool of reservoir assistants does not appear on the affine germs of $z_5$ and $z_5'$ anylonger. This is an expected outcome since transitions $z_5$ and $z_5'$ (high level of priority for the reservoir pool) can only receive tokens that have passed through transition $z_3$ (low level of priority for the reservoir pool) before, as a result $z_5$ and $z_5'$ cannot ultimately inhibit themselves. Such a layout of priorities does remain appropriate to perform arbitration of tokens orientation in case of conflicts and we show below that it still produces effects in the scope of long-run time analysis of the system.

As in~\Cref{sec:correspondencecasestudy}, a choice of policy (\ie~a choice of minimizing terms in the lexicographic system) provides affine equalities determining the throughput as an affine function of the resources of the model, the validity region of this expression being obtained by inequalities derived from the remaining (non-minimizing) terms of the system. This leads to nine full-dimensional congestion phases (maximal cells of the throughput complex) covering $(\Rplus)^3$, that we depict and number on~\Cref{fig:phasesRes} and whose polyhedral form is given in~\Cref{ap:congestionphasesdetailed} along with the expressions of throughputs.

As expected, the introduction of a new type of resource agent (the reservoir assistant) introduces more slowdown phases if its initial marking $N_R$ is too small. Therefore, to ensure the good behavior of the~\eqref{eq:SAMU2} model whose design relies substantially on the reservoir, one needs to take $N_R\geq N_R^*\coloneqq 2\pi\lambda\tau_2$. Note that the minimal number of MRAs (resp.\ emergency physicians) to answer all the calls is not affected by the presence of the reservoir by comparison with~\eqref{SAMU1} model, and is still equal to $N_A^* \coloneqq \lambda(\tau_1+\pi\tau_2)$ (resp.\ $N_P^*\coloneqq \pi\lambda(\tau_2+\tau_3)$). These three lower bounds on $N_A$, $N_R$ and $N_P$ define the phase \phase{1}, that we may refer to as the ``fluid phase''.

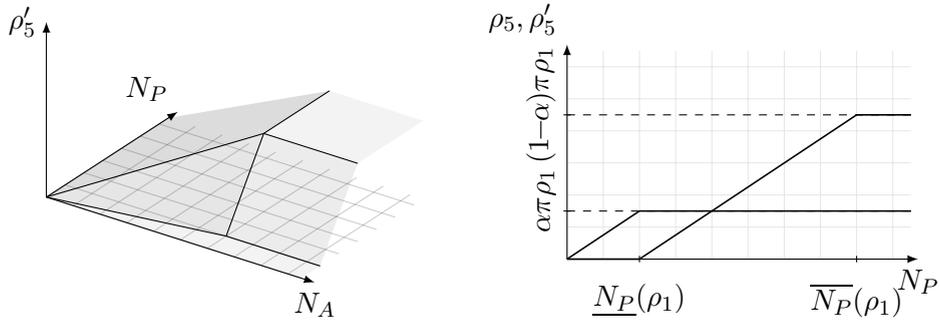
\begin{figure}[h]
  \hspace{-.3cm}
  \centering
  \begin{tabular}{lr}
  \def\tkzscl{.48}  

\tdplotsetmaincoords{63}{35}
\hskip-2ex
\begin{tikzpicture}[tdplot_main_coords,scale=\tkzscl]

    \begin{scope}[scale=0.9]

  \draw[->,>=latex] (0,0,0) -- coordinate (x axis mid) (10,0,0) node[below] {$N_A$};
  \draw[->,>=latex] (0,0,0) -- coordinate (y axis mid) (0,7,0) node[above left] {$N_P$};
  \draw[->,>=latex] (0,0,0) -- (0,0,6) node[left] {$\rho_5'$};

  \def\alp{0.33}
  \def\bet{0.67}

  \foreach \x in {0,...,9}
    \draw[thin, opacity=.2] (\x,0)--(\x,6.5) ;
  \foreach \y in {0,...,6}
    \draw[thin, opacity=.2] (0,\y)--(9.5,\y) ;

  \coordinate (x0) at (0,0);
  \coordinate (x1) at (9.5,1,4*\alp);
  \coordinate (x2) at (6,6.5,4*\alp);
  \coordinate (x1n) at (9.5,3,4*\bet);
  \coordinate (x2n) at (6,6.5,4*\bet);
  \coordinate (x10) at (9.5,1,0);
  \coordinate (x10n) at (9.5,3,0);
  \coordinate (x20) at (6,6.5,0);
  \coordinate (x3) at (6,1,4*\alp);
  \coordinate (x3n) at (6,3,4*\bet);
  \coordinate (x30) at (6,1,0);
  \coordinate (x30n) at (6,3,0);
  \coordinate (xh0) at (6,0);
  \coordinate (xh1) at (6,6.5,4*\alp);
  \coordinate (xh4) at (9.5,6.5,4*\alp);
  \coordinate (xh1n) at (6,6.5,4*\bet);
  \coordinate (xh4n) at (9.5,6.5,4*\bet);
  \coordinate (xh10) at (6,6.5,0);
  \coordinate (xh40) at (9.5,6.5,0);
  \coordinate (xh2) at (0,3);
  \coordinate (xh3) at (9.5,3);
  \coordinate (xh5) at (9.5,0);
  \coordinate (xh6) at (0, 6.5);

 \draw (x0) -- (x3n);
 \draw (x1n) -- (x3n);
 \draw (x2n) -- (x3n);
 \draw (x30) -- (x3n);

 \draw (x0)  -- (x30);
 \draw (x10) -- (x30);

  \fill[black, opacity=0.14] (x0) -- (x3n) -- (xh1n) -- (xh6) -- cycle;
  \fill[black, opacity=0.05] (x3n) -- (x1n) -- (xh4n) -- (x2n) -- cycle;
  \fill[black, opacity=0.1] (x0) -- (x3n) -- (x30) -- cycle;
  \fill[black, opacity=0.075] (x30) -- (x3n) -- (x1n) -- (x10) -- cycle;
  \fill[black, opacity=0.05] (x30) -- (x0) -- (xh5) -- (x10) -- cycle;

\end{scope}

\end{tikzpicture}
 &
  \def\tkzscl{.48}  

\begin{tikzpicture}[scale=\tkzscl]
\begin{scope}[scale=0.88,xscale=1.13]
\draw[step=1,black,thin,opacity = 0.1] (0,0) grid (9.5,6.5);
  \draw[->,>=latex] (0,0) -- coordinate (x axis mid) (9.7,0) node[below] {$N_P$};
  \draw[->,>=latex] (0,0) -- coordinate (y axis mid) (0,6.7) node[above left] {$\rho_5,\rho_5'$};

  \coordinate (x0) at (0,0);
  \coordinate (x1) at (9.5,1.5);
  \coordinate (x2) at (2,1.5);
  \coordinate (x3) at (2,0);
  \coordinate (x4) at (8,4.5);
  \coordinate (x5) at (9.5,4.5);
  \coordinate (x6) at (0,1.5);
  \coordinate (x7) at (0,4.5);

  \draw (8,-.1) -- (8,.1) node[color=black, label={[color=black,label distance=.1cm]below:$\overline{N_P}(\rho_1)$}] {};
  \draw (2,-.1) -- (2,.1) node[color=black, label={[color=black,label distance=.1cm]below:$\underline{N_P}(\rho_1)$}] {};
  \draw (-.1,1.5) -- (.1,1.5) node[color=black,anchor=west,label={[color=black, label distance=.2cm,rotate=90]:$\alpha\pi\rho_1$}] {};
  \draw (-.1,4.5) -- (.1,4.5) node[color=black,anchor=west,label={[color=black, label distance=.2cm,rotate=90]:$(1\!\!-\!\!\alpha)\pi\rho_1$}] {};

    \draw[semithick] (x0) -- (x2) -- (x1);
    \draw[semithick] (x0) -- (x3) -- (x4) -- (x5);
    \draw[dashed] (x6) -- (x1);
    \draw[dashed] (x7) -- (x5);
    \fill[green,opacity=0.] (8,0) -- (8,6.5) -- (9.5,6.5) -- (9.5,0) -- cycle;
    \fill[orange,opacity=0.] (2,0) -- (2,6.5) -- (0,6.5) -- (0,0) -- cycle;

    \fill[yellow,opacity=0.] (2,0) -- (2,6.5) -- (8,6.5) -- (8,.0) -- cycle;
\end{scope}
\end{tikzpicture}
  \end{tabular}
\caption{The throughput $\rho_5'$ is not concave, although $\rho_5$ and $\rho_5+\rho_5'$ still are}\label{fig:phasediagram2}
\end{figure}

The second new feature of this model compared with~\eqref{SAMU1} lies in the duplication of the physician's lane and the fact that very urgent calls (in proportion $\alpha$ among all calls transfered to doctors) are handled in priority. This has the effect of splitting each congestion phase associated with a lack of emergency physicians in two parts. Indeed,
given an MRA throughput $\rho_1$, define the two functions $\underline{N}_P$ and $\overline{N}_P$ by
\[
\underline{N}_P(\rho_1) \coloneqq  \pi\alpha(\tau_2+\tau_3)\rho_1 \qquad \text{and} \qquad \overline{N}_P(\rho_1) \coloneqq \pi(\tau_2+\tau_3)\rho_1 \,.
\]
A minimum number of $\overline{N}_P$ physicians is needed to handle all the calls passed by the MRAs \textit{via} the reservoir assistant. However, in case of a lack of physicians, the priority mechanism ensures that the very urgent calls remain handled as long as $N_P\geq\underline{N}_P$ (phases \phase{4\alpha}, \phase{5\alpha} and \phase{6\alpha}). Below the latter threshold, there are too few physicians to handle these very urgent calls (phases \phase{4}, \phase{5} and \phase{6}). Remark that in presence of priorities, the throughput function of transitions may not be concave anymore, see for instance $\rho_5'$ as a function of $N_A$ and $N_P$ in~\Cref{fig:phasediagram2} (supposing reservoir assistants are not limiting, thus $N_R\geq\overline{N}_R$). In this cross-section though, note that in addition to $\rho_1$ and $\rho_3$, both $\rho_5$ and $\rho_5+\rho_5'$ are still concave.

\medskip
A second qualitative advantage of the system~\eqref{eq:SAMU2} is that contrary to the model~\eqref{SAMU1}, we observe that a slowdown in the emergency physician circuit does not affect the throughput of the MRAs, as an effect of their desynchronization by the reservoir buffer. It may still happen that we encounter both a lack of MRAs and physicians (phases \phase{5\alpha} and \phase{5}), but the latter do not prevent the former to pick up inbound calls at their maximal possible throughput.

\medskip
It is instructive to study the situations in which the reservoir assistants are understaffed (phases \phase{3}, \phase{6\alpha} and \phase{6}), although such situations are not desirable in practice.

\medskip
In particular, we observe that throughputs $\rho_3$ and $\rho_5$ are proportional to $N_R/\tau_s - N_P/(\tau_s+\tau_3)$ in phase \phase{6\alpha} (it remains true for $\rho_3$ in phase \phase{6} as well), which means that increasing the number of emergency physicians slows down the handling of top priority calls! This establishes the following seemingly paradoxical property:\vspace*{-2mm}
\begin{paradox} In the presence of priority rulings, the asymptotic throughputs of some transitions (even one with highest level of priority) of the net may be decreased by an increase of the resources.\label{paradox:nonmonotonic}
\end{paradox}

\begin{figure}[h]
  \hspace{-8mm}
  \centering
  \def\tkzscl{.48}  

\tdplotsetmaincoords{68}{-20}
\hskip-2ex
\scalebox{1.1}{
 \begin{tikzpicture}[tdplot_main_coords,scale=\tkzscl]

    \begin{scope}[scale=0.9]

  \draw[->,>=latex] (0,0,0) -- coordinate (x axis mid) (10,0,0) node[below] {$N_P$};
  \draw[->,>=latex] (0,0,0) -- coordinate (y axis mid) (0,9,0) node[above left] {$N_R$};
  \draw[->,>=latex] (0,0,0) -- (0,0,6) node[left] {$\rho_5$};

  \def\alphaa{0.33}
  \def\betaa{0.85}
  \def\xm{9.5}
  \def\ym{8.5}

    \def\lambdaa{1.8}
    \def\pii{0.3}
    \def\tauun{8}
    \def\tautrois{10}
    \def\taus{6}

    \def\scl{20}

  \foreach \x in {0,...,9}
    \draw[thin, opacity=.2] (\x,0)--(\x,\ym) ;
  \foreach \y in {0,...,8}
    \draw[thin, opacity=.2] (0,\y)--(\xm,\y) ;

  \coordinate (x0) at   (0,   0, 0     ) {};
  \coordinate (x1) at   (\xm, 0, 0     ) {};
  \coordinate (x2) at ($(0,   {\taus*\pii*\lambdaa}, 0)$) {};
  \coordinate (x3) at ($({\pii*\alphaa*\lambdaa*(\taus+\tautrois)},   {(1+\alphaa)*\taus*\pii*\lambdaa}, {\alphaa*\pii*\lambdaa*\scl})$) {};
  \coordinate (x4) at ($({\pii*\lambdaa*(\taus+\tautrois)},   {2*\taus*\pii*\lambdaa}, {\alphaa*\pii*\lambdaa*\scl})$) {};
  \coordinate (x5) at ($(\xm,   2*\taus*\pii*\lambdaa, {\alphaa*\pii*\lambdaa*\scl})$) {};
  \coordinate (x6) at ($(0,\ym, 0)$) {};
  \coordinate (x7) at ($({\pii*\alphaa*\lambdaa*(\taus+\tautrois)},\ym, {\alphaa*\pii*\lambdaa*\scl})$) {};
  \coordinate (x8) at ($({\pii*\lambdaa*(\taus+\tautrois)},\ym, {\alphaa*\pii*\lambdaa*\scl})$) {};
  \coordinate (x9) at ($(\xm,\ym, {\alphaa*\pii*\lambdaa*\scl})$) {};

  \coordinate (b0) at ($(0, {\betaa*\taus*\pii*\lambdaa}, 0)$) {};
  \coordinate (b1) at ($({\betaa*\alphaa*(\taus+\tautrois)*\pii*\lambdaa/(1+\alphaa)}, {\betaa*\taus*\pii*\lambdaa}, {\scl*\betaa*\alphaa*\pii*\lambdaa/(1+\alphaa)})$) {};
  \coordinate (b2) at ($({\betaa*(\taus+\tautrois)*\pii*\lambdaa/2}, {\betaa*\taus*\pii*\lambdaa}, {\scl*\betaa*\alphaa*\pii*\lambdaa/2})$) {};
  \coordinate (b3) at ($(\xm, {\betaa*\taus*\pii*\lambdaa}, {\scl*\betaa*\alphaa*\pii*\lambdaa/2})$) {};

  \def\zm{5}
  \coordinate (bt0) at ($(0, {\betaa*\taus*\pii*\lambdaa}, \zm)$) {};
  \coordinate (bt3) at ($(\xm, {\betaa*\taus*\pii*\lambdaa}, \zm)$) {};
  \coordinate (bb0) at ($(0, {\betaa*\taus*\pii*\lambdaa}, 0)$) {};
  \coordinate (bb1) at ($({\betaa*\alphaa*(\taus+\tautrois)*\pii*\lambdaa/(1+\alphaa)}, {\betaa*\taus*\pii*\lambdaa}, 0)$) {};
  \coordinate (bb2) at ($({\betaa*(\taus+\tautrois)*\pii*\lambdaa/2}, {\betaa*\taus*\pii*\lambdaa}, 0)$) {};
  \coordinate (bb3) at ($(\xm, {\betaa*\taus*\pii*\lambdaa},0)$) {};

\draw[red, thin, dashed] (b1) -- (bb1);
\draw[red, thin, dashed] (b2) -- (bb2);
\draw[red, thin, dashed] (b3) -- (bb3);
\draw[red, thin, opacity = .3] (bb0) -- (bb3);

  \filldraw[black, opacity=0.09] (x0) -- (x1) -- (x5) -- (x4) -- cycle;
  \filldraw[black, opacity=0.14] (x0) -- (x3) -- (x4) -- cycle;
  \filldraw[black, opacity=0.12] (x0) -- (x3) -- (x2) -- cycle;
  \filldraw[black, opacity=0.09] (x2) -- (x3) -- (x7) -- (x6) -- cycle;
  \filldraw[black, opacity=0.05] (x4) -- (x3) -- (x7) -- (x8) -- cycle;
  \filldraw[black, opacity=0.05] (x5) -- (x4) -- (x8) -- (x9) -- cycle;

\draw (x0) -- (x3) -- (x2);
\draw (x7) -- (x3) -- (x4) -- (x8);
\draw (x5) -- (x4) -- (x0);

  \draw[red, thick] (b0) -- (b1) -- (b2) -- (b3);
\end{scope}
\end{tikzpicture} } \vspace*{2mm}
\caption{The throughput $\rho_5$ does not decrease with respect to $N_P$}\label{fig:nonmonotonicthroughput}
\vspace{-1mm}
\end{figure}
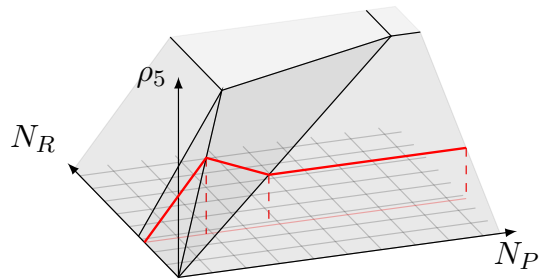

We depict this phenomenon on~\Cref{fig:nonmonotonicthroughput} (supposing that first-answering MRAs are non-limiting so $N_A\geq N_A^*$), the red curve showing that $\rho_5$ is nonmonotic as $N_P$ grows and as we go through phases \phase{6}, then \phase{6\alpha} and finally \phase{3}. This counter-intuitive situation can be explained as follows: suppose for sake of simplicity that $N_R < \pi\lambda\tau_s$, so that there are not enough reservoir assistants to even fill the reservoir room (while twice this amount of agents would be needed to fill it and empty it). At $N_P = 0$, both very urgent and urgent calls queues build up in the reservoir at throughput $N_R/\pi\tau_s$ (maximum filling speed of reservoir agents). As $N_P$ increases (phase \phase{6}), some very urgent cases can now be handled by emergency physicians at rate $N_P/{(\tau_s+\tau_3)}$, however this task requires a second accompaniment step with reservoir assistants and is prioritized to them. Hence, they spend less time filling the reservoir and $\rho_3$ decreases. As $N_P$ increases again (phase \phase{6\alpha}), there are enough emergency physicians to also pick-up calls from the second-priority room, requiring again the intervention of reservoir assistants (before admitting new patients), as a result $\rho_3$ decreases again and so does $\rho_5=\alpha\rho_3$: the reservoir assistants have ``less time'' to admit and detect very urgent calls as they must escort already admitted very urgent and urgent calls to doctors before, and eventually less very urgent calls are handled. We insist again that this unusual phenomenon arises because top-priority transitions are served downstream of some inferior-priority ones. This echoes a similar pathological behavior observed in~\cite{ye2007paradox}.

\definecolor{gray3}{gray}{0.91}
\definecolor{gray4}{gray}{0.95}
\setlength{\arrayrulewidth}{1.3pt}

  \def\tblscl{.188}
  \def\alphaa{0.3}
  \def\showtag{1}
  \begin{figure}[h]
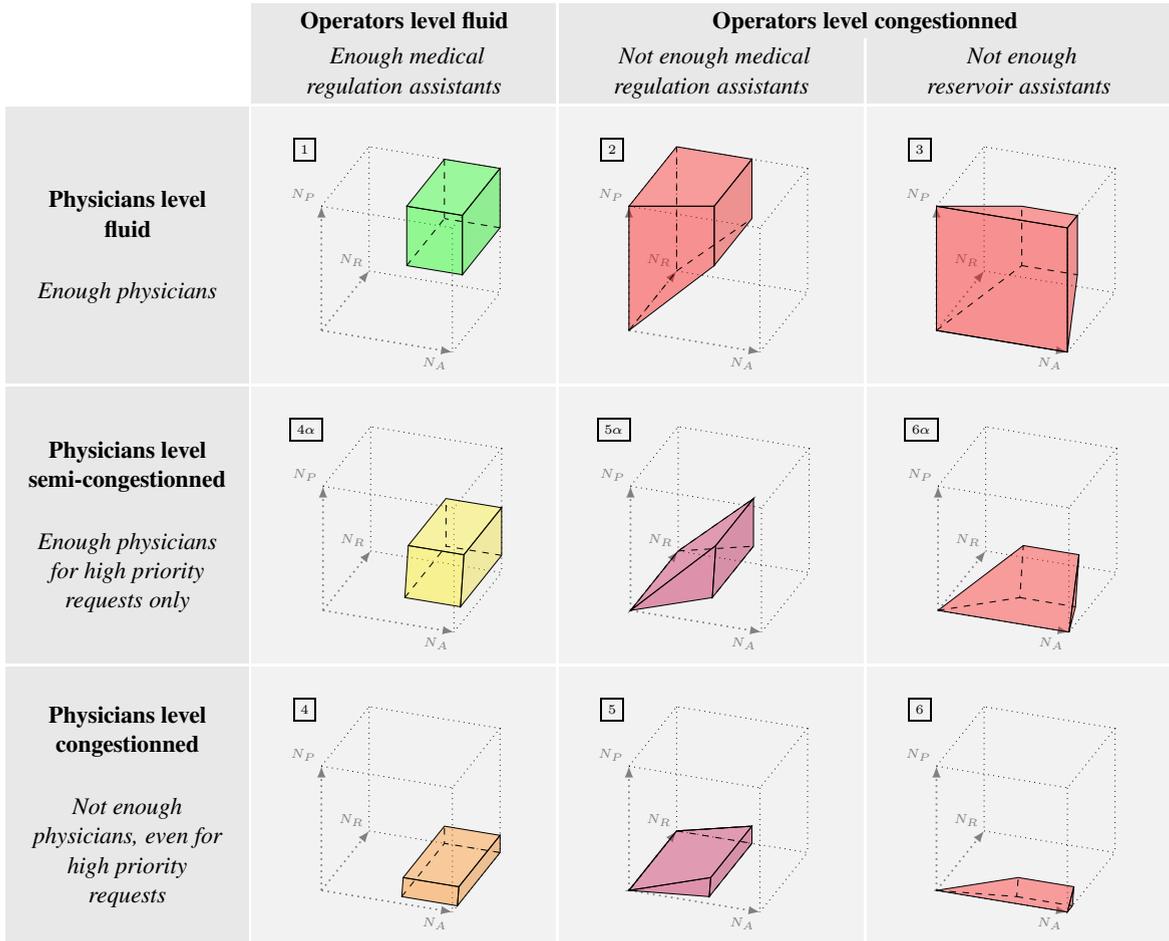

  \vspace*{3mm}
      \resizebox{\textwidth}{!}{

  }\vspace*{2mm}
  \caption{Congestion phases of the model~\eqref{eq:SAMU2}}
  \label{fig:phasesRes}
  \end{figure}

  \section{Concluding remarks}

  We developed a model of fluid timed Petri net including both preselection
  and priority routings. In the absence of priority, we showed
  that the dynamics of the net is equivalent to the Bellman equation
  of a semi-Markov decision problem, from which a number
  of properties follows: existence and universality
  of the throughput vector (independence from the initial condition),
  existence of stationary regimes by reduction to a lexicographic system,
  polynomial-time computability of the throughput by reduction to a linear program, and explicit representation of the throughput, as a function
  of resources, by a polyhedral complex. This approach provides
  tools to address further issues: e.g., an important practical
  problem is to bound the time needed to absorb a peak of congestion.
  We believe it can still be addressed using techniques
  of nonexpansive dynamical systems, along lines of
  ~\cite{Schweitzer1978a,spectral}, we
  leave this for a subsequent work.

  In the presence of
  priority, only part of these results remain: finding stationary
  regimes is equivalent to solving a lexicographic system,
  which is a system of polynomial equations over a tropical
  semifield of germs. In other words, stationary regimes
  are the points of a tropical variety, and we still get a polyhedral
  complex, describing all the congestion phases.
    This complex can be computed
  in exponential time, by enumerating strategies, as we did on our case study. Whereas we do not
  expect worst-case polynomial-time computability results in such a generality
  (solving tropical polynomial systems is generally NP-hard), we leave it
  for further work to get finer complexity bounds. It is also an open
  problem to compare the asymptotic behavior of counters, for an arbitrary
  initial condition, with stationary solutions.

\subsection*{Acknowledgment}

This work was done through a collaboration with the SAMU of AP-HP. We wish to thank especially, \textsc{Pr}.~P.~Carli, \dr~\'E.~Chanzy, \dr~\'E.~Lecarpentier, \dr~Ch.~Leroy, \dr~Th.~Loeb, \dr~J.-S.~Marx, \dr~N.~Poirot and \dr~C.~Telion for making this work possible, for their support and for insightful comments. We also thank all the other personals of the SAMU, in particular \dr~J.~Boutet, J.-M.~Gourgues, I.~Lhomme, F.~Linval and Th.~P\'erennou.
The present work was developed as part of an ongoing
study within a project
between AP-HP and Préfecture de Police (PP)
aiming at an interoperability among the
different call centers. It strongly benefited from
the experience acquired, since 2014, on the analysis of the new platform ``PFAU'' (answering to the emergency numbers 17, 18 and 112), developed
by PP.  We thank \textsc{Lcl}~S.~Raclot and R.~Reboul,
in charge of the PFAU project at PP, for their constant support and insightful
comments all along these years. This work also strongly benefited of scientific discussions with Ph.~Robert, whom we thank. Finally, we thank the reviewers of the Petri nets 2020 conference in which a first version of this work was accepted for their careful and detailed reviews, as well as the organizing comittee, who have made this enhanced version possible. We also thank the reviewers of the present article for their very careful and detailed comments.

\bibliographystyle{fundam}

          \clearpage

\appendix

\section{Phases Equations for Model~\eqref{eq:SAMU2}}
\label{ap:congestionphasesdetailed}

\begin{center}
\resizebox{!}{.455\textheight}{
\makebox[\textwidth][c]{
  \setlength{\arrayrulewidth}{.1pt}
\begin{tabular}{R{.6cm}C{9cm}|C{3cm}|C{2cm}|C{3cm}}
  \multicolumn{2}{l|}{Phase \hskip17ex Bounding inequalities} & $\rho_1$ & $\rho_5$ & $\rho_5'$ \\
  &  & & &\\[-2.5ex] \hline & & &\\[-1.5ex]
  $\boxed{1}$ & $\displaystyle\frac{N_A}{\tau_1+\pi\tau_s} \geq \lambda \qquad ; \qquad \displaystyle\frac{N_R}{2\tau_s}\geq \pi\lambda$ \newline[1.8ex] $\displaystyle\frac{N_P}{\tau_s+\tau_3}\geq \pi\lambda$ & $\lambda$ & $\pi\alpha\lambda$ & $\pi(1-\alpha)\lambda$\\
  &  & & &\\[-1.5ex]\hline & & &\\[-1.5ex]
  $\boxed{4\alpha}$ & $\displaystyle\frac{N_A}{\tau_1+\pi\tau_s} \geq \lambda \qquad ; \qquad \displaystyle\frac{N_R}{\tau_s}\geq \pi\lambda+\frac{N_P}{\tau_s+\tau_3}$ \newline[1.8ex] $\pi\alpha\lambda \leq \displaystyle\frac{N_P}{\tau_s+\tau_3}\leq \pi\lambda$ & $\lambda$ & $\pi\alpha\lambda$ & $\displaystyle\frac{N_P}{\tau_s+\tau_3}-\pi\alpha\lambda$\\
  &  & & &\\[-1.5ex]\hline & & &\\[-1.5ex]
  $\boxed{4}$ & $\displaystyle\frac{N_A}{\tau_1+\pi\tau_s} \geq \lambda \qquad ; \qquad \displaystyle\frac{N_R}{\tau_s}\geq \pi\lambda+\frac{N_P}{\tau_s+\tau_3}$ \newline[1.8ex] $\displaystyle\frac{N_P}{\tau_s+\tau_3} \leq \pi\alpha\lambda$ & $\lambda$ & $\displaystyle\frac{N_P}{\tau_s+\tau_3}$ & 0\\

    &   & & &\\[-1.5ex]\hline & & &\\[-1.5ex]
    $\boxed{2}$ & $\displaystyle\frac{N_A}{\tau_1+\pi\tau_s} < \lambda \qquad ; \qquad \displaystyle\frac{N_R}{2\tau_s} \geq \frac{\pi N_A}{\tau_1+\pi\tau_s}$ \newline[1.8ex] $\displaystyle\frac{N_P}{\tau_s+\tau_3} \geq \frac{\pi N_A}{\tau_1+\pi\tau_s}$ & $\displaystyle\frac{N_A}{\tau_1+\pi\tau_s}$ & $\displaystyle\frac{\pi\alpha N_A}{\tau_1+\pi\tau_s}$ & $\displaystyle\frac{\pi(1-\alpha)N_A}{\tau_1+\pi\tau_s}$\\
    &   & & &\\[-1.5ex]\hline & & &\\[-1.5ex]
    $\boxed{5\alpha}$ & $\displaystyle\frac{N_A}{\tau_1+\pi\tau_s} < \lambda \qquad ; \qquad \frac{N_R}{\tau_s} \geq \displaystyle\frac{\pi N_A}{\tau_1+\pi\tau_s} + \frac{N_P}{\tau_3+\tau_s}$ \newline[1.8ex] $\displaystyle \frac{\pi\alpha N_A}{\tau_1+\pi\tau_s} \leq \frac{N_P}{\tau_s+\tau_3} \leq \displaystyle\frac{\pi N_A}{\tau_1+\pi\tau_s} $  & $\displaystyle\frac{N_A}{\tau_1+\pi\tau_s}$ & $\displaystyle\frac{\pi\alpha N_A}{\tau_1+\pi\tau_s}$ & $\displaystyle\frac{N_P}{\tau_s+\tau_3}-\frac{\pi\alpha N_A}{\tau_1+\pi\tau_s}$\\
    &   & & &\\[-1.5ex]\hline & & &\\[-1.5ex]
    $\boxed{5}$ & $\displaystyle\frac{N_A}{\tau_1+\pi\tau_s} < \lambda \qquad ; \qquad \frac{N_R}{\tau_s} \geq \displaystyle\frac{\pi N_A}{\tau_1+\pi\tau_s} + \frac{N_P}{\tau_3+\tau_s}$ \newline[1.8ex] $\displaystyle\frac{N_P}{\tau_3+\tau_s} \leq \displaystyle\frac{\pi\alpha N_A}{\tau_1+\pi\tau_s}$ & $\displaystyle\frac{N_A}{\tau_1+\pi\tau_s}$ & $\displaystyle\frac{N_P}{\tau_s+\tau_3}$ & 0\\

    &   & & &\\[-1.5ex]\hline & & &\\[-1.5ex]
    $\boxed{3}$ & $\displaystyle\frac{N_R}{2\tau_s}<\pi\lambda  \qquad ; \qquad \displaystyle\frac{N_R}{2\tau_s} \leq \frac{\pi N_A}{\tau_1+\pi\tau_s}$ \newline[1.8ex] $\displaystyle\frac{N_P}{\tau_s+\tau_3} \geq \frac{N_R}{2\tau_s}$ & $\displaystyle\frac{N_R}{2\pi\tau_s}$ & $\displaystyle\frac{\alpha N_R}{2\tau_s}$ & $\displaystyle\frac{(1-\alpha)N_R}{2\tau_s}$\\
    &   & & &\\[-1.5ex]\hline & & &\\[-1.5ex]
    $\boxed{6\alpha}$ & $\displaystyle\frac{N_R}{\tau_s} - \frac{N_P}{\tau_s+\tau_3} < \pi \lambda \qquad ; \qquad \displaystyle\frac{\pi N_A}{\tau_1+\pi\tau_s} \geq \frac{N_R}{\tau_s} - \frac{N_P}{\tau_s+\tau_3}$ \newline[1.8ex]  $\displaystyle\frac{\alpha N_R}{(1+\alpha)\tau_s} \leq \frac{N_P}{\tau_s+\tau_3} \leq \frac{N_R}{2\tau_s}$ & $\displaystyle\frac{N_R}{\pi\tau_s}-\frac{N_P}{\pi(\tau_s+\tau_3)}$ & $\displaystyle\frac{\alpha N_R}{\tau_s}-\frac{\alpha N_P}{\tau_s+\tau_3}$ & $\displaystyle \frac{(1+\alpha)N_P}{\tau_s+\tau_3}-\frac{\alpha N_R}{\tau_s}$\\
    &   & & &\\[-1.5ex]\hline & & &\\[-1.5ex]
    $\boxed{6}$ & $\displaystyle \frac{N_R}{\tau_s}-\frac{N_P}{\tau_s+\tau_3}<\pi\lambda  \qquad ; \qquad \displaystyle\frac{\pi N_A}{\tau_1+\pi\tau_s}\geq \displaystyle \frac{N_R}{\tau_s}-\frac{N_P}{\tau_s+\tau_3} $ \newline[1.8ex] $\displaystyle\frac{N_P}{\tau_s+\tau_3} \leq \frac{\alpha N_R}{(1+\alpha)\tau_s}$ & $\displaystyle\frac{N_R}{\pi\tau_s}-\frac{N_P}{\pi(\tau_s+\tau_3)}$ & $\displaystyle\frac{N_P}{\tau_s+\tau_3}$ & 0
    \end{tabular}
    }
    }
  \end{center}

\end{document}